\documentclass[reqno,12pt]{amsart}
\usepackage{a4wide}
\usepackage{amsmath}
\usepackage{amssymb}
\usepackage{amsthm}
\usepackage{amstext}
\usepackage{geometry}
\usepackage{fancyhdr}
\usepackage{array}
\usepackage{graphicx} 
\usepackage{hyperref}
\usepackage{fullpage} 
\usepackage{bm} 
\allowdisplaybreaks
\hypersetup{ 
colorlinks=true,
linkcolor=blue, 
urlcolor=red,
citecolor=green,
} 
\usepackage{tikz}
\usetikzlibrary{
 shapes.geometric,
 arrows.meta,
 angles,
 quotes,
 patterns,
 arrows
}
\usetikzlibrary{angles, quotes, calc}

\usepackage{subcaption}
\allowdisplaybreaks[4]

\newtheorem{theorem}{Theorem}[section]

\newtheorem{corollary}[theorem]{Corollary}

\newtheorem{lemma}[theorem]{Lemma}
\newtheorem{proposition}[theorem]{Proposition} 
\newtheorem{remark}[theorem]{Remark}

\numberwithin{equation}{section}

\begin{document}

\title{On the uniqueness of the critical point of $\psi_\Omega$}

\author{Junyuan Liu}
\address{[Junyuan Liu] College of Informatics, Huazhong Agricultural University
Wuhan 430070, Hubei, P. R. China}
\email{jyliu@mail.hzau.edu.cn}

\author{Shuangjie Peng}
\address{[Shuangjie Peng] School of Mathematics and Statistics, Key Laboratory of Nonlinear Analysis and Applications (Ministry of Education), Central China Normal University, Wuhan, 430079, P. R. China}
\email{sjpeng@mail.ccnu.edu.cn}

\author{Fulin Zhong$^{\dagger}$}
\address{[Fulin Zhong] School of Mathematics and Statistics, Central China Normal University, Wuhan, Hubei 430079, P. R. China}
\email{flzhong@mails.ccnu.edu.cn}

\thanks{$^{\dagger}$ Corresponding author.}
\date{\today}

\begin{abstract} 
We prove that for any bounded convex domain $\Omega \subset \mathbb{R}^n$, the function 
\begin{equation*}
\psi_\Omega(\xi) = \int_{\mathbb{R}^n\setminus\Omega} \frac{\mathrm{d}x}{|x-\xi|^{2n}}, \quad \xi\in\Omega,
\end{equation*} 
has exactly one critical point. This confirms a conjecture proposed by Clapp, Pistoia and Salda\~na in [J. Math. Pures Appl. 205 (2026), 103783]. The proof uses a spherical coordinates representation to write $\psi_\Omega$ as an integral of the distance function $\rho(\xi,\omega)$. This approach is not limited to $\psi_\Omega$. Instead, it provides a general framework for analyzing a broad class of functionals involving the boundary distance. 
We also examine non-convex domains. In particular, a single annulus exhibits a full circle of critical points, while multiple concentric annuli produce finitely many critical spheres. These examples show that the convexity hypothesis is essential for the uniqueness conclusion. The method developed here for handling spherical integrals involving the distance function is likely to be useful in other geometric and analytic contexts. 
\end{abstract}

\maketitle


{\bf Keywords:} {\em Uniqueness, convex domain, critical point, distance function}

\vspace{0.25cm}

{\bf AMS subject classification: 35A15 
35B44 
52A20 
35B40 
}

\vspace{0.25cm}

\section{Introduction}
The study of critical points of solutions to elliptic equations has a long and rich history, intimately connected with the geometry of the underlying domain. For positive solutions of semilinear problems such as
\begin{equation*}
\begin{cases}
-\Delta u = f(u) &\text{in }\Omega,\\ 
u=0 &\text{on }\partial\Omega,
\end{cases}
\end{equation*}
the number and location of critical points are known to be strongly influenced by the convexity or non‑convexity of $\Omega$.

The relationship between the geometry of the domain and the number of critical points of solutions to elliptic equations has been extensively investigated. For convex domains, Gidas-Ni-Nirenberg \cite{GidasNiNirenberg1979} proved uniqueness under symmetry assumptions. Without symmetry the question remains open, but important partial results are known for specific nonlinearities. The torsion problem was solved by Makar‑Limanov \cite{MakarLimanov1971}, while uniqueness for the first eigenfunction follows from the log‑concavity results of Brascamp-Lieb \cite{BrascampLieb1976} and Acker-Payne-Philippin \cite{AckerPaynePhilippin1981}, later extended to higher dimensions by Korevaar-Lewis \cite{KorevaarLewis1987}. For semi‑stable solutions in planar convex domains, Cabr\'e-Chanillo \cite{CabreChanillo1998} established uniqueness under positive boundary curvature, a result later extended by De Regibus-Grossi-Mukherjee \cite{DeRegibusGrossiMukherjee2021} to domains where curvature may vanish. More recently, Battaglia-De Regibus-Grossi \cite{BattagliaDeRegibusGrossi2024} gave geometric conditions guaranteeing uniqueness for the Poisson problem with variable source term, even for non‑convex domains. In striking contrast, non‑convex domains can support arbitrarily many critical points: Gladiali-Grossi \cite{GladialiGrossi2022} constructed ``almost convex'' planar domains whose torsion solutions have arbitrarily many maxima. Domains with holes also exhibit multiple critical points: a topological lower bound follows from Lusternik–Schnirelmann category, and Grossi–Luo \cite{GrossiLuo2020} gave a precise asymptotic analysis for the semilinear problem with a small hole. For further related results, see also  \cite{GladialiGrossi2024,GladialiGrossiLuoYan2025,GladialiGrossiLuoYan2026}.

A particularly interesting question arises from a recent work by Clapp, Pistoia and Salda\~na \cite{ClappPistoiaSaldana2026}, who studied the semilinear elliptic problem
\begin{equation*}
-\Delta u = Q_\Omega(x) |u|^{p-1-\varepsilon}u, \quad u \in D^{1,2}(\mathbb{R}^n),
\end{equation*}
where $p = \frac{n+2}{n-2}$ is the critical Sobolev exponent, $n \ge 3$, and
\begin{equation*}
Q_\Omega(x) = \mathbf{1}_\Omega(x) - \mathbf{1}_{\mathbb{R}^n\setminus\Omega}(x) =
\begin{cases}
+1, & x \in \Omega, \\
-1, & x \in \mathbb{R}^n \setminus \overline{\Omega}.
\end{cases}
\end{equation*}
Thus $Q_\Omega$ is a discontinuous coefficient that changes sign across the boundary $\partial\Omega$. The space $D^{1,2}(\mathbb{R}^n)$ denotes the homogeneous Sobolev space defined as the completion of $C_c^\infty(\mathbb{R}^n)$ with respect to the norm $\|u\|_{D^{1,2}(\mathbb{R}^n)} := \|\nabla u\|_{L^2(\mathbb{R}^n)}$. The sign-changing nature of $Q_\Omega$ is relevant in several physical applications. For instance, it describes optical waveguides propagating through a stratified dielectric medium \cite{Stuart1990, Stuart1993}.

Using a Lyapunov-Schmidt reduction, the authors of \cite{ClappPistoiaSaldana2026} showed that for sufficiently small $\varepsilon>0$, positive solutions concentrating at a single point exist, and their concentration profiles are given by standard bubbles. They demonstrated that positive concentrating solutions exist and that their concentration points are linked to the critical points of a function $\psi_\Omega$ defined on $\Omega$ by
\begin{equation*}
\psi_\Omega(\xi) = \int_{\mathbb{R}^n\setminus\Omega} \frac{1}{|x-\xi|^{2n}} \mathrm{d}x, \quad \xi \in \Omega.
\end{equation*}
In particular, the number of positive concentrating solutions is directly linked to the number of non-degenerate critical points of $\psi_\Omega$. In view of this, the authors raised in \cite[Question 1.6]{ClappPistoiaSaldana2026} the following natural \textbf{conjecture}: 
\begin{equation*}
\emph{If $\Omega$ is convex, does $\psi_\Omega$ have a unique critical point?}
\end{equation*}
Note that even for such a special case as the ball, the authors did not obtain uniqueness.

The convexity assumption is both natural and restrictive: convex domains appear frequently in applications, and they often simplify geometric questions. However, the function $\psi_\Omega$ is not obviously convex, nor is it obvious that it should have a unique critical point. Indeed, non-convex domains, such as dumbbell shapes, may admit multiple critical points. This phenomenon discussed in \cite{ClappPistoiaSaldana2026} is illustrated in Figure \ref{fig:comparison}. The conjecture thus asks whether convexity forces uniqueness, a question that is geometrically appealing and analytically non-trivial.

\begin{figure}[htbp]
\centering
\begin{tikzpicture} 
\begin{scope}[xshift=5cm]
\draw[thick] 
 (0,0) circle (1.2)
 (5,0) circle (1.2);
 
 \draw[thick] 
 (1.2,0.05) -- (3.8,0.05) -- (3.8,-0.05) -- (1.2,-0.05) -- cycle;
\node[circle, fill=red, inner sep=1pt, label={below:min$_1$}] at (-0.5,0) {};
\node[circle, fill=red, inner sep=1pt, label={below:min$_2$}] at (5.5,0) {};
\node at (2.5,1) {$\Omega_{\text{dumbbell}}$};
\node at (2.5,-1.8) {Multiple critical points};
\end{scope}
\end{tikzpicture}
\caption{A non-convex domain may exhibit multiple critical points.}
\label{fig:comparison}
\end{figure}

In this paper, we give an \textbf{affirmative} answer to this question. We prove that for any bounded convex domain $\Omega\subset\mathbb{R}^n$, the function $\psi_\Omega$ is strictly convex on $\Omega$, and therefore possesses exactly one critical point, which is necessarily its global minimum. The proof reveals a hidden convexity that is not apparent from the original definition of $\psi_\Omega$ as a volume integral. It becomes visible only after a suitable spherical reduction that brings the geometry of the boundary into play. Our proof does not rely on symmetry assumptions and works for every bounded convex domain in any dimension. We also discuss several examples that illustrate the drastic change in the number of critical points when convexity is lost, showing that the conjecture is sharp and that a simple relation with topological invariants cannot be expected in general. 
Moreover, the key properties of the distance function $\rho(\xi,\omega)$ developed in this paper (see Appendix \ref{section-appendix}), including its regularity, uniform bounds, and the explicit structure of its gradient, are not limited to the specific functional $\psi_\Omega$. They provide a flexible framework for analyzing a broad class of functionals that admit a spherical representation involving the distance to the boundary, and are expected to have further applications in convex analysis, potential theory, and related fields.

The fact that $\psi_\Omega(\xi)\to+\infty$ as $\xi\to\partial\Omega$ implies that $\psi_\Omega$ attains a minimum in $\Omega$, and this minimizer is a critical point. Moreover, we have the following theorem.

\begin{theorem} \label{thm:main}
Let $\Omega \subset \mathbb{R}^n$ be a bounded convex domain (no further regularity is required). Then the function
\begin{equation*}
\psi_\Omega(\xi) = \int_{\mathbb{R}^n\setminus\Omega} \frac{\mathrm{d}x}{|x-\xi|^{2n}}, \quad \xi \in \Omega,
\end{equation*}
has a unique critical point in $\Omega$. This critical point is the global minimizer of $\psi_\Omega$.

Moreover, if in addition $\Omega$ is symmetric with respect to some point $p\in\Omega$ (for example, $\Omega$ is a ball or an ellipsoid centered at $p$), then $p$ is the unique critical point.
\end{theorem}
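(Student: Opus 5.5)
The plan is to rewrite $\psi_\Omega$ in spherical coordinates centred at $\xi$ and read off convexity from the geometry of $\Omega$. For $\xi\in\Omega$ and $\omega\in S^{n-1}$ let
\begin{equation*}
\rho(\xi,\omega)=\sup\{t>0:\ \xi+t\omega\in\Omega\}
\end{equation*}
be the distance from $\xi$ to $\partial\Omega$ in direction $\omega$. Since $\Omega$ is convex, the ray $\{\xi+r\omega\}$ lies in $\Omega$ for $r<\rho(\xi,\omega)$ and outside $\Omega$ for $r>\rho(\xi,\omega)$; the single boundary point has measure zero. Writing $x=\xi+r\omega$ therefore gives
\begin{equation*}
\psi_\Omega(\xi)=\int_{S^{n-1}}\int_{\rho(\xi,\omega)}^{\infty} r^{-2n}r^{n-1}\,\mathrm{d}r\,\mathrm{d}\omega=\frac1n\int_{S^{n-1}}\rho(\xi,\omega)^{-n}\,\mathrm{d}\omega .
\end{equation*}
Separately, $\psi_\Omega\in C^\infty(\Omega)$. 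Indeed, for $\xi$ in a compact subset of $\Omega$ the integrand $|x-\xi|^{-2n}$ and all its $\xi$-derivatives are bounded, uniformly in $\xi$, by an integrable function of $x\in\mathbb{R}^n\setminus\Omega$, so we may differentiate under the integral.

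\textbf{Step 1: $\xi\mapsto\rho(\xi,\omega)$ is concave for each fixed $\omega$.} Take $\xi_1,\xi_2\in\Omega$ and $t_i<\rho(\xi_i,\omega)$, so that $\xi_i+t_i\omega\in\Omega$. For $\lambda\in[0,1]$, convexity of $\Omega$ gives
\begin{equation*}
\lambda\xi_1+(1-\lambda)\xi_2+\bigl(\lambda t_1+(1-\lambda)t_2\bigr)\omega\in\Omega .
\end{equation*}
Letting $t_i\uparrow\rho(\xi_i,\omega)$ yields $\rho(\lambda\xi_1+(1-\lambda)\xi_2,\omega)\ge\lambda\rho(\xi_1,\omega)+(1-\lambda)\rho(\xi_2,\omega)$.

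\textbf{Step 2: $\rho^{-n}$ is convex, with strictness.} The map $f(t)=t^{-n}$ is strictly decreasing and strictly convex on $(0,\infty)$. Combining this with Step 1,
\begin{equation*}
f\bigl(\rho(\xi_\lambda,\omega)\bigr)\le f\bigl(\lambda\rho_1+(1-\lambda)\rho_2\bigr)\le\lambda f(\rho_1)+(1-\lambda)f(\rho_2),
\end{equation*}
where $\xi_\lambda=\lambda\xi_1+(1-\lambda)\xi_2$ and $\rho_i=\rho(\xi_i,\omega)$. For $\lambda\in(0,1)$ the second inequality is strict whenever $\rho_1\neq\rho_2$.

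\textbf{Step 3: strict convexity of $\psi_\Omega$.} Integrating Step 2 over $\omega$ shows $\psi_\Omega$ is convex. It is strictly convex provided that, for $\xi_1\ne\xi_2$, the set $\{\omega:\rho(\xi_1,\omega)\ne\rho(\xi_2,\omega)\}$ has positive measure.
\begin{itemize}
\item Suppose $\rho(\xi_1,\cdot)=\rho(\xi_2,\cdot)$ everywhere. Then $\Omega-\xi_1=\Omega-\xi_2$, so $\Omega$ is invariant under translation by $\xi_2-\xi_1\neq0$. This is impossible for a bounded set.
\item To pass from one direction to a set of positive measure, I would use that $\omega\mapsto\rho(\xi,\omega)$ is continuous for an interior point $\xi$ of a bounded convex set. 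It is the reciprocal of the Minkowski gauge of $\Omega-\xi$, which is a finite convex function, hence continuous. The set where the two functions differ is therefore a nonempty open subset of $S^{n-1}$.
\end{itemize}
This is the only point requiring care, since no boundary regularity is assumed. I expect it to be the main, though mild, obstacle; the gauge argument avoids any smoothness of $\partial\Omega$.

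\textbf{Conclusion.} A differentiable, strictly convex function on the convex open set $\Omega$ has at most one critical point, and any critical point is its global minimizer. Existence follows from $\psi_\Omega\to+\infty$ at $\partial\Omega$, as noted before the theorem. If $\Omega$ is symmetric about $p$, the change of variables $x\mapsto 2p-x$ gives $\psi_\Omega(2p-\xi)=\psi_\Omega(\xi)$. Hence $2p-\xi^*$ is also a minimizer whenever $\xi^*$ is, and uniqueness forces $\xi^*=p$.
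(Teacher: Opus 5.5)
Your proof is correct and follows the paper's route. You reduce to $\frac1n\int_{\mathbb{S}^{n-1}}\rho^{-n}\,\mathrm{d}\omega$, use concavity of $\rho(\cdot,\omega)$ composed with the decreasing strictly convex $t^{-n}$, get strict convexity from an open set of directions where $\rho(\xi_1,\cdot)\neq\rho(\xi_2,\cdot)$, and conclude from differentiability plus strict convexity. The differences are only in sub-arguments: you get a differing direction from translation invariance of a bounded set, where the paper uses the explicit direction $\omega_0=(\xi_2-\xi_1)/|\xi_2-\xi_1|$ with $\rho(\xi_2,\omega_0)=\rho(\xi_1,\omega_0)-|\xi_1-\xi_2|$; you get continuity in $\omega$ from the Minkowski gauge instead of a sequential argument; and you locate $p$ by reflecting the unique minimizer rather than differentiating $\psi_\Omega(2p-\xi)=\psi_\Omega(\xi)$ at $p$.
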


\begin{figure}[htbp]
\centering
\begin{tikzpicture}[scale=1.1]

\begin{scope}[xshift=-3cm]
\draw[thick] (90:1.2) -- (210:1.2) -- (330:1.2) -- cycle;
\node[circle, fill=red, inner sep=1pt, label={below:min}] at (0,0) {};
\node at (0,1.5) {$\Omega_{\text{triangle}}$};
\node at (0,-1.5) {Equilateral triangle};
\end{scope}

\begin{scope}[xshift=3cm]
\draw[thick] 
 (-1.0,1.0) -- (1.0,1.0) 
 arc[start angle=90, end angle=-90, radius=1.0] 
 -- (-1.0,-1.0) 
 arc[start angle=270, end angle=90, radius=1.0] -- cycle;
\node[circle, fill=red, inner sep=1pt, label={below:min}] at (0,0) {};
\node at (0,1.5) {$\Omega_{\text{stadium}}$};
\node at (0,-1.5) {Stadium};
\end{scope}
\end{tikzpicture}
\caption{Two convex domains with symmetry.}
\label{fig:weird_symmetric}
\end{figure}
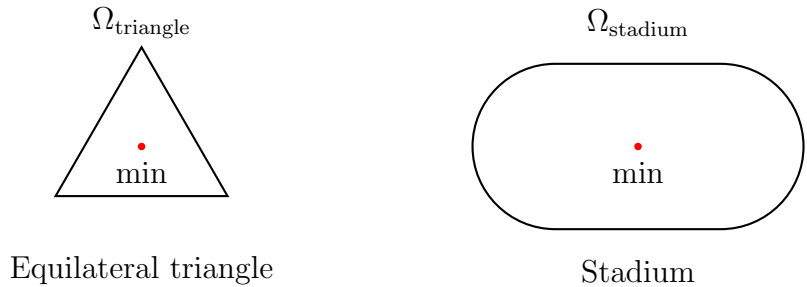

As a concrete example, Figure \ref{fig:weird_symmetric} depicts a convex domain with rotational symmetry, and its unique critical point is located at the symmetry center, in accordance with Theorem \ref{thm:main}. Furthermore, we obtain the following result for the original semilinear elliptic problem studied in \cite{ClappPistoiaSaldana2026}.

\begin{corollary}
Let $\Omega\subset\mathbb{R}^n$ be a bounded convex and smooth domain. Then there exists $\varepsilon_0>0$ such that, for all $\varepsilon \in (0,\varepsilon_0)$, the slightly subcritical problem
\begin{equation*}
-\Delta u = Q_\Omega(x) |u|^{p-1-\varepsilon}u,\quad u\in D^{1,2}(\mathbb{R}^n) \cap L^{p+1-\varepsilon}(\mathbb{R}^n),
\end{equation*}
admits a positive single‑bubbling solution $u_\varepsilon$, and its concentration point is the unique critical point of $\psi_\Omega$.
\end{corollary}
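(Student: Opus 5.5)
The corollary should follow by combining Theorem \ref{thm:main} with the Lyapunov--Schmidt reduction of \cite{ClappPistoiaSaldana2026}, so the plan is to check that the unique critical point of $\psi_\Omega$ satisfies the stability hypothesis their existence result needs. In \cite{ClappPistoiaSaldana2026} one looks for solutions $u_\varepsilon = PU_{\delta,\xi} + \phi$, where $U_{\delta,\xi}$ is a standard bubble with $\xi\in\Omega$ and concentration parameter $\delta=\delta(\varepsilon)\,d$. The problem reduces to finding critical points of a finite-dimensional function
\begin{equation*}
\widetilde J_\varepsilon(d,\xi) = c_0 + \varepsilon\bigl(a_1 d^{\,?}\,\psi_\Omega(\xi) + \Phi(d)\bigr) + o(\varepsilon)
\end{equation*}
(up to the normalization used there). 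The expansion holds $C^0$-uniformly, and $C^1$-uniformly where it is available, on compact subsets of $(0,\infty)\times\Omega$. The $d$-dependence has a nondegenerate critical point $d_0(\xi)$, and the $\xi$-dependence enters only through $\psi_\Omega$. Their existence statement yields a concentrating solution near any \emph{stable} critical point of $\psi_\Omega$, for example a strict local minimum or an isolated critical point with nonzero Brouwer degree. It also shows that the concentration point of any such single-bubble family is a critical point of $\psi_\Omega$. I will take both facts as given from \cite{ClappPistoiaSaldana2026}; smoothness of $\Omega$ is assumed so that their estimates apply.

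First I take $\xi_0$ to be the unique critical point of $\psi_\Omega$ given by Theorem \ref{thm:main}. It is the global minimizer, and since $\psi_\Omega\to+\infty$ at $\partial\Omega$ and there are no other critical points, it is an isolated strict minimum. Hence there is a small ball $B=B_r(\xi_0)\Subset\Omega$ with
\begin{equation*}
\min_{\partial B}\psi_\Omega > \psi_\Omega(\xi_0).
\end{equation*}
If the strict convexity mentioned in the introduction gives $D^2\psi_\Omega(\xi_0)>0$, then $\xi_0$ is even nondegenerate. I would not rely on that, because a strict minimum is already $C^0$-stable, and that is enough.

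Next I run the standard min--max argument. Choose a compact set $K=[d_1,d_2]\times\overline B$ on which $\widetilde J_\varepsilon$ (minus its constant, divided by $\varepsilon$) is $C^0$-close to $F(d,\xi)=\Phi(d)+a\,d^{\,?}\psi_\Omega(\xi)$. The point $(d_0(\xi_0),\xi_0)$ is a nondegenerate critical point in $d$ and a strict minimum in $\xi$ for $F$. This configuration is stable under $C^0$-small perturbations: take a linking or saddle-type argument in $d$ combined with minimization in $\xi$, or equivalently note that the local Brouwer degree of $\nabla F$ on $K$ is nonzero. So for $\varepsilon$ small, $\widetilde J_\varepsilon$ has a critical point $(d_\varepsilon,\xi_\varepsilon)\in K$, which gives a solution $u_\varepsilon$. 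Positivity follows as in \cite{ClappPistoiaSaldana2026}: $\|\phi\|$ is small and the nonlinearity can be replaced by its positive part, or one applies the maximum principle to $u_\varepsilon^-$.

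Finally, for the location of the concentration point, I let $r\to0$ along a diagonal sequence, which gives $\xi_\varepsilon\to\xi_0$. Alternatively, any limit point of $\xi_\varepsilon$ must be a critical point of $\psi_\Omega$ by the reduction, hence equals $\xi_0$ by uniqueness. The step most likely to need care is the second one: matching the exact stability notion required in \cite{ClappPistoiaSaldana2026}, which might demand nondegeneracy. If it does, I would supply $D^2\psi_\Omega(\xi_0)>0$ from the strict convexity of $\psi_\Omega$ established in the proof of Theorem \ref{thm:main}, using the spherical representation to show the Hessian is positive definite.
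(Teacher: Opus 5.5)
Your proposal is correct and matches the paper's implicit argument. The paper gives no separate proof: the corollary comes from feeding Theorem~\ref{thm:main} into the Lyapunov--Schmidt existence result of \cite{ClappPistoiaSaldana2026}, since that theorem makes the unique critical point the strict global minimizer of $\psi_\Omega$ and hence a $C^0$-stable critical point. Two caveats on your write-up: the ``equivalently, Brouwer degree'' route needs $C^1$-closeness of the reduced energy, and strict convexity alone does not give $D^2\psi_\Omega(\xi_0)>0$; if nondegeneracy were required, it would come from the positive definiteness of $\nabla^2F$ proved in Appendix~\ref{section-appendix} for $C^2$ convex domains.
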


\begin{remark}
The geometry of the domain strongly influences the number of critical points of $\psi_\Omega$, see Figure \ref{fig:nonconvex}. The square with a hole (Figure \ref{fig:hole}) is a typical example where the presence of a topological obstruction forces the existence of at least four critical points. The annular domain (Figure \ref{fig:annulus}) is even more striking: its rotational symmetry leads to a whole continuum of critical points, a full circle. These examples highlight the deep connection between the topology of $\Omega$ and the critical point theory of $\psi_\Omega$, as already hinted at by the Lusternik-Schnirelmann estimates in \cite{ClappPistoiaSaldana2026}.

Also, the example in Figure \ref{fig:annulus} illustrates that a precise formula linking the number of critical points of $\psi_\Omega$ to fine topological invariants (such as the genus or the Betti numbers) cannot be expected in general. Hence, the critical point structure of $\psi_\Omega$ is a subtle problem where symmetry and convexity play a decisive role.
\end{remark}

\begin{figure}[htbp]
\centering
\begin{subfigure}[b]{0.45\textwidth}
\centering
\begin{tikzpicture}[scale=0.6]

\fill[white] (-2,-2) rectangle (2,2);

\fill[white] (0,0) circle (0.8);

\draw[thick] (-2,-2) rectangle (2,2);

\draw[thick] (0,0) circle (0.8);

\node at (0,2.4) {$\Omega_{\text{hole}}$};

\fill[red] (-1.5,0) circle (2pt) node[below] {};
\fill[red] (1.5,0) circle (2pt) node[below] {};
\fill[red] (0,1.5) circle (2pt) node[right] {};
\fill[red] (0,-1.5) circle (2pt) node[left] {};
\end{tikzpicture}
\caption{Square with a hole (non-convex).}
\label{fig:hole}
\end{subfigure}
\hfill
\begin{subfigure}[b]{0.45\textwidth}
\centering
\begin{tikzpicture}[scale=0.7]

\fill[white] (0,0) circle (2) (0,0) circle (1);
\draw[thick] (0,0) circle (2);
\draw[thick] (0,0) circle (1);

\node at (0,2.4) {$\Omega_{\text{annular}}$};

\draw[dashed, red] (0,0) circle (1.5);
\end{tikzpicture}
\caption{Annular domain (non-convex).}
\label{fig:annulus}
\end{subfigure}
\caption{Two examples of non-convex domains.}
\label{fig:nonconvex}
\end{figure}
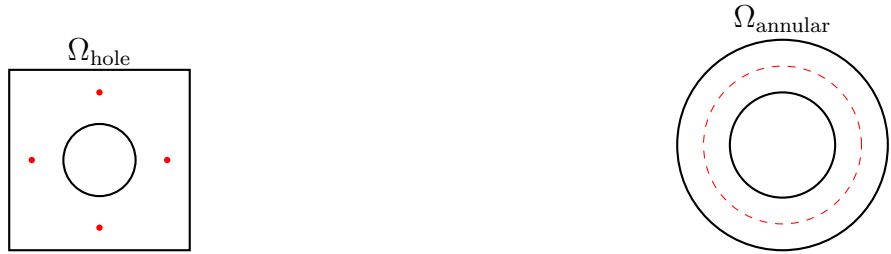

The following result abstracts the essential structure of the uniqueness proof given in the paper. It shows that the uniqueness of the critical point follows solely from the concavity of the distance function, the convexity and monotonicity properties of the integrand.

\begin{theorem} \label{thm:main-2}
Let $\Omega\subset\mathbb{R}^n$ be a bounded convex domain (no further regularity is required). For $\xi\in\Omega$ and a direction $\omega\in\mathbb{S}^{n-1}$, denote by $\rho(\xi,\omega)$ the distance from $\xi$ to the boundary $\partial\Omega$ in the direction $\omega$,
\begin{equation*}
\rho(\xi,\omega)=\sup\{t>0 \mid \xi+s\omega\in\Omega\ \text{for all }0\le s<t\}.
\end{equation*}
Let $f:(0,\infty)\to\mathbb{R}$ be a continuously differentiable function which is convex and decreasing. Define
\begin{equation*}
\Phi(\xi)=\int_{\mathbb{S}^{n-1}} f\left(\rho(\xi,\omega)\right)\mathrm{d}\omega,\quad \xi\in\Omega.
\end{equation*}
Then the following hold:
\begin{enumerate}
\item $\Phi$ is convex on $\Omega$.
\item If in addition $f$ is strictly convex, then $\Phi$ is strictly convex on $\Omega$.
\item If $\Phi(\xi)\to+\infty$ as $\xi\to\partial\Omega$, then $\Phi$ attains a unique global minimum in $\Omega$. Moreover, if $\Phi$ is differentiable, this minimizer is the only critical point of $\Phi$ in $\Omega$.
\end{enumerate}
\end{theorem}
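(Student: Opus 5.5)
The plan is to show that for each fixed direction $\omega$, the map $\xi\mapsto\rho(\xi,\omega)$ is concave on $\Omega$. Convexity of $\Phi$ then follows by composing with the convex decreasing $f$ and integrating over $\mathbb{S}^{n-1}$.

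\emph{Concavity of $\rho$.} Fix $\omega\in\mathbb{S}^{n-1}$, points $\xi_0,\xi_1\in\Omega$ and $\lambda\in[0,1]$, and set $\xi_\lambda=(1-\lambda)\xi_0+\lambda\xi_1$. For $0\le s<\rho(\xi_0,\omega)$ and $0\le t<\rho(\xi_1,\omega)$ both $\xi_0+s\omega$ and $\xi_1+t\omega$ lie in $\Omega$. By convexity of $\Omega$,
\begin{equation*}
(1-\lambda)(\xi_0+s\omega)+\lambda(\xi_1+t\omega)=\xi_\lambda+\big((1-\lambda)s+\lambda t\big)\omega\in\Omega .
\end{equation*}
Letting $s,t$ range over their intervals, every $r<(1-\lambda)\rho(\xi_0,\omega)+\lambda\rho(\xi_1,\omega)$ is attained this way. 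Since $\Omega$ is convex and $\xi_\lambda\in\Omega$, the segment from $\xi_\lambda$ to $\xi_\lambda+r\omega$ is contained in $\Omega$. Hence
\begin{equation*}
\rho(\xi_\lambda,\omega)\ge(1-\lambda)\rho(\xi_0,\omega)+\lambda\rho(\xi_1,\omega).
\end{equation*}
Boundedness of $\Omega$ guarantees that $\rho$ is finite and positive.

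\emph{Proof of (1).} Since $f$ is decreasing, the concavity inequality gives $f(\rho(\xi_\lambda,\omega))\le f\big((1-\lambda)\rho(\xi_0,\omega)+\lambda\rho(\xi_1,\omega)\big)$. Convexity of $f$ then bounds the right-hand side by $(1-\lambda)f(\rho(\xi_0,\omega))+\lambda f(\rho(\xi_1,\omega))$. Integrating over $\omega$ gives convexity of $\Phi$. Measurability is not an issue: $\rho(\cdot,\cdot)$ is lower semicontinuous, since $\Omega$ is open, so $f\circ\rho$ is measurable. I assume $\Phi$ is finite on $\Omega$, which holds because $\rho(\xi,\cdot)$ is bounded below by $\mathrm{dist}(\xi,\partial\Omega)>0$.

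\emph{Proof of (2), the main step.} For strict convexity, take $\xi_0\ne\xi_1$ and $\lambda\in(0,1)$. It suffices to find a set of directions $\omega$ of positive measure on which $\rho(\xi_0,\omega)\ne\rho(\xi_1,\omega)$. On that set, strict convexity of $f$ makes the second inequality strict, and integrating gives $\Phi(\xi_\lambda)<(1-\lambda)\Phi(\xi_0)+\lambda\Phi(\xi_1)$.

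To produce such a set, argue by contradiction. Suppose $\rho(\xi_0,\omega)=\rho(\xi_1,\omega)$ for a.e. $\omega$. The function $\omega\mapsto\rho(\xi,\omega)$ is continuous for convex $\Omega$ (the radial function of a convex body about an interior point), so equality then holds for every $\omega$. Write $\partial\Omega=\{\xi_0+\rho(\xi_0,\omega)\omega\}$. Equality for all $\omega$ means $\partial\Omega-\xi_0=\partial\Omega-\xi_1$, that is, $\partial\Omega$ is invariant under translation by $v=\xi_1-\xi_0\ne0$. Then $\overline\Omega=\overline\Omega+v=\overline\Omega+kv$ for all $k\in\mathbb{Z}$, contradicting boundedness. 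Justifying the continuity of the radial function and the a.e.-to-everywhere step is where the care lies; it is a standard convex-body fact, and I would cite it or prove it via the gauge function.

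\emph{Proof of (3).} Under coercivity, $\Phi$ has compact sublevel sets in $\Omega$, so $\Phi$ attains a minimum. By strict convexity the minimizer is unique. (This uses the hypothesis of (2), which is the setting relevant to Theorem \ref{thm:main}; if only (1) is assumed, the set of minimizers is convex, and the statement is meant under strict convexity.) If $\Phi$ is differentiable, a convex differentiable function satisfies $\Phi(\eta)\ge\Phi(\xi)+\nabla\Phi(\xi)\cdot(\eta-\xi)$. Hence any critical point is a global minimizer, and so it coincides with the unique minimizer.
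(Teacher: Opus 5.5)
Your proof is correct and follows the paper's argument: concavity of $\rho(\cdot,\omega)$, then composition with the convex decreasing $f$, then strictness on a positive-measure set of directions obtained from continuity of $\rho$ in $\omega$, and finally the first-order inequality for (3); like you, the paper tacitly uses the strict convexity from (2) to get uniqueness of the minimizer in (3). The only deviation is how you show $\{\omega:\rho(\xi_0,\omega)\neq\rho(\xi_1,\omega)\}$ is nonempty: instead of your translation-invariance contradiction, the paper takes $\omega_0=(\xi_1-\xi_0)/|\xi_1-\xi_0|$ and uses Lemma~\ref{lem:rho_linear}, $\rho(\xi_1,\omega_0)=\rho(\xi_0,\omega_0)-|\xi_1-\xi_0|$, which is more direct and does not need the radial parametrization of $\partial\Omega$.
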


In the previous, we analyzed the critical points of $\psi_\Omega$ for a bounded convex domain. A natural generalization is to consider domains consisting of several concentric annuli. Such domains are still rotationally symmetric, but their topology is richer. The following result illustrates how the topological complexity directly translates into a lower bound for the number of critical points.
 
\begin{theorem}\label{the:multiring}
Let $m\in\mathbb{N}_{+}$ and let $0<a_1<b_1<a_2<b_2<\cdots<a_m<b_m$. Consider the union of concentric annuli
\begin{equation*}
\Omega = \bigcup_{i=1}^m \{ x\in\mathbb{R}^n \mid a_i<|x|<b_i \}\subset\mathbb{R}^n,\quad n\ge 2.
\end{equation*}
Then the function $\psi_\Omega$ defined by
\begin{equation*}
\psi_\Omega(\xi)=\int_{\mathbb{R}^n\setminus\Omega}\frac{\mathrm{d}x}{|x-\xi|^{2n}},\quad \xi\in\Omega,
\end{equation*}
has exactly $m$ critical spheres $\{|\xi|=r_i\}$ with $r_i\in(a_i,b_i)$, $i=1,\cdots,m$. In particular, the number of critical points (each sphere counted as one critical set) equals the number of annular components.
\end{theorem}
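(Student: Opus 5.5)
By rotational invariance of $\mathbb{R}^n\setminus\Omega$ and of the kernel, $\psi_\Omega(\xi)=g(|\xi|)$ for a function $g$ defined on $\bigcup_i(a_i,b_i)$. Since $0\notin\Omega$, the critical points of $\psi_\Omega$ are exactly the spheres $\{|\xi|=r\}$ with $g'(r)=0$: at $\xi\neq0$ we have $\nabla\psi_\Omega(\xi)=g'(|\xi|)\,\xi/|\xi|$. It therefore suffices to prove that on each interval $(a_i,b_i)$ the function $g$ has exactly one critical point $r_i$. The plan has three parts: regularity, boundary blow-up (existence), and a subharmonicity argument (uniqueness).

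\emph{Regularity and the Laplacian.} For $\xi$ in a compact subset of $\Omega$, the distance $|x-\xi|$ with $x\in\mathbb{R}^n\setminus\Omega$ is bounded below by a positive constant. The integrand $|x-\xi|^{-2n}$ is also integrable at infinity, together with all its $\xi$-derivatives. Hence we may differentiate under the integral sign, and $\psi_\Omega\in C^\infty(\Omega)$. For $\alpha=2n$ we have $\Delta_\xi|x-\xi|^{-\alpha}=\alpha(\alpha-n+2)|x-\xi|^{-\alpha-2}$, which gives
\begin{equation*}
\Delta\psi_\Omega(\xi)=2n(n+2)\int_{\mathbb{R}^n\setminus\Omega}\frac{\mathrm{d}x}{|x-\xi|^{2n+2}}>0 ,\qquad \xi\in\Omega .
\end{equation*}
So $\psi_\Omega$ is strictly subharmonic. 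Writing the Laplacian of a radial function gives $\bigl(r^{n-1}g'(r)\bigr)'=r^{n-1}\Delta\psi_\Omega>0$ on each $(a_i,b_i)$. Consequently $r\mapsto r^{n-1}g'(r)$ is strictly increasing there, and $g'$ has at most one zero in each interval.

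\emph{Existence via boundary blow-up.} We show that $g(r)\to+\infty$ as $r\to a_i^+$ and as $r\to b_i^-$. Near a boundary sphere, the complement contains a small ball (or a thin shell region) tangent to $\partial\Omega$ at the nearest boundary point. Take $\xi$ with $d=\operatorname{dist}(\xi,\partial\Omega)$ small and choose that ball of fixed radius. Its contribution is bounded below by $c\,d^{-n}$, either by scaling or by a direct comparison with the portion of the ball inside a cone of fixed aperture around the normal. Hence $\psi_\Omega\to+\infty$ at $\partial\Omega$. Since $g$ is $C^1$ on $(a_i,b_i)$ and tends to $+\infty$ at both endpoints, it attains an interior minimum, so $g'$ has at least one zero.

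\emph{Conclusion.} Combining the two steps, $r^{n-1}g'$ is strictly increasing, negative near $a_i$ and positive near $b_i$. It therefore vanishes at exactly one point $r_i\in(a_i,b_i)$, which is the minimizer of $g$ on that interval. This yields exactly $m$ critical spheres $\{|\xi|=r_i\}$. The only step needing some care is the uniform lower bound $c\,d^{-n}$ near the boundary. Everything else is a direct computation, and the strict subharmonicity replaces the convexity argument that fails for annuli.
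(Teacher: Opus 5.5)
Your proof is correct, and it takes a genuinely different route from the paper. The paper expands $|x-re_1|^{-2n}$ in Gegenbauer polynomials $C_k^{(n)}(\cos\theta)$ and integrates term by term over the parts of $\mathbb{R}^n\setminus\Omega$ inside and outside radius $r$. It then uses the explicit values $A_{2m+1}=0$, $A_{2m}>0$ computed in the appendix to get $g''>0$ (the paper writes $\psi$ for your $g$), and reads the limits $g'(a_i^+)=-\infty$, $g'(b_i^-)=+\infty$ off the differentiated series.

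You replace all of this by the single identity $\Delta_\xi|x-\xi|^{-2n}=2n(n+2)|x-\xi|^{-2n-2}$. This gives strict monotonicity of $r^{n-1}g'$ rather than convexity of $g$. You then get existence from the blow-up of $g$ rather than of $g'$. The step you flagged is indeed easy. Let $d=\operatorname{dist}(\xi,\partial\Omega)$ and let $u$ be the unit normal pointing into the adjacent complementary region. For small $d$, the ball $B_{d/2}(\xi+\tfrac{3d}{2}u)$ lies in $\mathbb{R}^n\setminus\Omega$ and within distance $2d$ of $\xi$. So it contributes at least $|B_{d/2}|\,(2d)^{-2n}=c_n d^{-n}$.

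Your route is shorter, avoids special functions, and treats $m\ge 2$ uniformly. By contrast, the paper's proof of this theorem just cites Proposition~\ref{prop:annulus}, whose series is for the complement of a single annulus. Strictly, one must add that each complementary shell $\{b_j\le|x|\le a_{j+1}\}$ lies entirely inside or outside radius $r$, and hence contributes terms with nonnegative second derivative. Your argument needs no such bookkeeping. Your uniqueness step also works unchanged for any radially symmetric domain avoiding the origin and any kernel $|x-\xi|^{-\alpha}$ with $\alpha>n$.

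In exchange, the paper's approach gives the extra information that the radial profile $g$ is itself strictly convex on each $(a_i,b_i)$. It also yields an explicit series for $\psi$ that could help locate $r_i$ quantitatively.
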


The paper is organized as follows. In Section \ref{section-2}, we present some illustrative examples that highlight the role of symmetry and convexity. In Section \ref{section-3}, we introduce the distance function $\rho(\xi,\omega)$ and establish its key properties. 
Section \ref{section-4} is devoted to the spherical reduction, where we prove the strict convexity of the reduced functional and deduce the uniqueness of the critical point, thereby completing the proof of Theorem \ref{thm:main}. 
Finally, Appendix \ref{section-appendix} contains auxiliary lemmas on the distance function, which are of independent interest and also provide an alternative proof of the main result. Appendix \ref{section-com} details the computation of an integral appearing in the main text.

\subsection*{Notation} For vectors $u,v\in\mathbb{R}^n$, we denote by $u\otimes v$ the matrix with entries $(u\otimes v)_{ij}=u_i v_j$. The symbol $^\top$ stands for the transpose of a vector or a matrix.

\section{Illustrative examples and the role of convexity}
\label{section-2}

In this section, we examine several concrete domains to illustrate the meaning and the power of our uniqueness theorem, Theorem \ref{thm:main}.
These examples serve to highlight the crucial role of symmetry in locating the critical point, as well as the necessity of the abstract approach for establishing uniqueness.
In particular, we consider the annulus and multiple annuli, obtaining precise information about their critical points and thereby proving Theorem \ref{the:multiring}.

Let $\Omega = B_R(0)= \{ x\in\mathbb{R}^n \mid |x|<R \}$ be the open ball of radius $R>0$. 
Because of rotational symmetry, $\psi_\Omega(\xi)$ depends only on $|\xi|=r$. 
Writing $\xi = r e_1$ with $e_1=(1,0,\cdots,0)$ and $0\le r<R$, we set
\begin{equation*}
\psi(r)=\psi_\Omega(r e_1)=\int_{|x|\ge R}\frac{\mathrm{d}x}{|x-re_1|^{2n}}.
\end{equation*}

\begin{proposition}\label{prop:ball}
For the ball $\Omega = B_R(0)$, one has $\psi'(r)>0$ for all $r\in(0,R)$ and $\psi'(0)=0$. 
Consequently the only critical point of $\psi_\Omega$ is the origin $\xi=0$.
\end{proposition}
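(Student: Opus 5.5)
We will show that $\psi'(r)>0$ on $(0,R)$ by differentiating under the integral sign and then pairing points of the exterior region by a reflection. The claim $\psi'(0)=0$ is immediate: $\psi_\Omega$ is radial and smooth near the origin, so $\psi(r)=\psi(-r)$ (using $\xi=-re_1$) and $\psi$ is even. For $0<r<R$ the integrand $|x-re_1|^{-2n}$ is smooth in $r$ and uniformly bounded on $\{|x|\ge R\}$ by $C(R-r_0)^{-2n}\min(1,|x|^{-2n})$ for $r\le r_0<R$, which is integrable. Hence
\begin{equation*}
\psi'(r)=2n\int_{|x|\ge R}\frac{x_1-r}{|x-re_1|^{2n+2}}\,\mathrm{d}x .
\end{equation*}

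To get the sign we reflect across the hyperplane $\{x_1=r\}$, i.e. we use the map $T(x)=(2r-x_1,x_2,\dots,x_n)$. Since $T$ fixes $re_1$ and preserves $|x-re_1|$, it maps the region $A_-=\{|x|\ge R,\ x_1<r\}$ onto the set $T(A_-)\subset\{x_1>r\}$. Moreover $|T(x)|^2-|x|^2=(2r-x_1)^2-x_1^2=4r(r-x_1)>0$ for $x_1<r$, so $|T(x)|>|x|\ge R$ and therefore $T(A_-)\subset A_+=\{|x|\ge R,\ x_1>r\}$. The integrand is odd under $T$, so the contributions of $A_-$ and $T(A_-)$ cancel exactly, leaving
\begin{equation*}
\psi'(r)=2n\int_{A_+\setminus T(A_-)}\frac{x_1-r}{|x-re_1|^{2n+2}}\,\mathrm{d}x .
\end{equation*}
Here the integrand is strictly positive. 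The set $A_+\setminus T(A_-)$ has positive measure: it contains the points of $\{x_1>r\}$ with $R\le|x|$ whose reflection lies inside the ball. Concretely, points with $x_1>r$ slightly and $|x|$ slightly larger than $R$ near the sphere have $|T(x)|<|x|$ when $x_1>r$, so a small open neighbourhood of a point $x$ with $|x|=R$ and $x_1\in(r,R)$ (which exists since $r<R$) lies in the difference. Hence $\psi'(r)>0$.

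Since $\psi_\Omega(\xi)=\psi(|\xi|)$, we have $\nabla\psi_\Omega(\xi)=\psi'(|\xi|)\,\xi/|\xi|\neq0$ for $\xi\ne0$. So the origin is the unique critical point, and it is indeed critical by evenness.

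The main point requiring care is the measure-positivity and inclusion bookkeeping in the reflection step, i.e. showing that $T$ maps $A_-$ into $A_+$ injectively and measure-preservingly. This follows from the identity $|T(x)|^2-|x|^2=4r(r-x_1)$. The justification for differentiating under the integral sign is routine.
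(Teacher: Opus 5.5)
Your proof is correct, but it takes a different route from the paper's. The paper writes the integrand of $\psi'(r)$ as $-\partial_{x_1}|x-re_1|^{-2n}$ and integrates in $x_1$ along each line $x'=\mathrm{const}$; lines with $|x'|>R$ contribute nothing. This gives $\psi'(r)=\int_{|x'|\le R}\left[((a-r)^2+|x'|^2)^{-n}-((a+r)^2+|x'|^2)^{-n}\right]\mathrm{d}x'$ with $a=\sqrt{R^2-|x'|^2}$, and the integrand is positive because $a+r>|a-r|$. In effect the paper compares the boundary points $(\pm a,x')$, which is a reflection across $\{x_1=0\}$ carried out on $\partial B_R$ after the fundamental theorem of calculus.

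You instead reflect the volume integral across the hyperplane $\{x_1=r\}$ through $\xi=re_1$ itself, so you need no Fubini and no boundary terms. What this buys is generality. The only property of the ball you use is $T(A_-)\subset\mathbb{R}^n\setminus\Omega$, which is equivalent to the reflected cap $T(\Omega\cap\{x_1>r\})$ lying in $\Omega$. That is Alexandrov's moving-plane condition. So the same computation gives $\partial_{x_1}\psi_\Omega\ge0$ at every point of any hyperplane where that condition holds, for an arbitrary domain, with strict inequality when the leftover set has positive measure. This localizes the critical points of $\psi_\Omega$ without any symmetry assumption. The paper's computation, by contrast, needs the $x_1$-sections of $\mathbb{R}^n\setminus\Omega$ to have the form $\{|x_1|\ge a(x')\}$, i.e.\ Steiner symmetry about $\{x_1=0\}$. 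In exchange it yields an explicit $(n-1)$-dimensional formula for $\psi'(r)$.

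There are a few small slips to tidy up.
\begin{enumerate}
\item In the positive-measure step, the condition you need is $|T(x)|<R$, not $|T(x)|<|x|$.
\item What lies in $A_+\setminus T(A_-)$ is the intersection of your neighbourhood with $\{|x|\ge R\}$, not the whole neighbourhood. This intersection still has positive measure, since $|T(x)|^2=R^2-4r(x_1-r)<R^2$ at the chosen sphere point and the strict inequality persists nearby.
\item The dominating function used to differentiate under the integral should bound the $r$-derivative of the integrand, e.g.\ by $2n|x-re_1|^{-2n-1}$, rather than the integrand itself.
\end{enumerate}
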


\begin{proof}
Differentiating under the integral sign gives (see also Lemma \ref{lem:psi_differentiable})
\begin{equation}
\psi'(r)=2n\int_{|x|\ge R}\frac{x_1-r}{|x-re_1|^{2n+2}}\mathrm{d}x= -\int_{|x|\ge R}\frac{\partial}{\partial x_1}|x-re_1|^{-2n}\mathrm{d}x. \label{eq:ball_deriv}
\end{equation}

For a fixed $x'=(x_2,\cdots,x_n)\in\mathbb{R}^{n-1}$ set
\begin{equation*}
a=
\begin{cases}
\sqrt{R^2-|x'|^2} &\text{if }|x'|\le R, \\
0 &\text{if }|x'| > R.
\end{cases} 
\end{equation*} 
If $|x'|>R$, the condition $|x|\ge R$ holds for every $x_1\in\mathbb{R}$. Hence the inner integral over $x_1$ becomes
\begin{equation*}
\int_{-\infty}^{\infty}\frac{\partial}{\partial x_1}|(x_1,x')-re_1|^{-2n}\mathrm{d}x_1
= \lim_{L\to\infty}\left(|(L,x')-re_1|^{-2n}-|(-L,x')-re_1|^{-2n}\right)=0.
\end{equation*}
Therefore the contribution of such $x'$ to the whole integral is zero. 
If $|x'|\le R$,  the domain $|x|\ge R$ splits into $x_1\ge a$ and $x_1\le -a$. Then
\begin{equation*}
-\int_{|x_1|\ge a}\frac{\partial}{\partial x_1}|x-re_1|^{-2n}\mathrm{d}x_1 \\
= \left(|(a,x')-re_1|^{-2n}-|(-a,x')-re_1|^{-2n}\right).
\end{equation*}
Hence
\begin{equation}
\psi'(r)= \int_{|x'|\le R}\left(|(a,x')-re_1|^{-2n}-|(-a,x')-re_1|^{-2n}\right)\mathrm{d}x'. \label{eq:ball_final}
\end{equation}

Notice that
\begin{equation*}
|(a,x')-re_1|^2 = (a-r)^2+|x'|^2, \quad
|(-a,x')-re_1|^2 = (a+r)^2+|x'|^2.
\end{equation*}
For $a\ge0$ and $r>0$ we have $a+r>|a-r|$, and therefore
\begin{equation*}
|(a,x')-re_1| < |(-a,x')-re_1|.
\end{equation*}
The function $t\mapsto t^{-2n}$ is strictly decreasing, so
\begin{equation*}
|(a,x')-re_1|^{-2n} > |(-a,x')-re_1|^{-2n}.
\end{equation*}
Thus the integrand in \eqref{eq:ball_final} is positive for every $x'$ with $|x'|\le R$. 
Consequently $\psi'(r)>0$ for all $r\in(0,R)$.

Finally, at $r=0$, formula \eqref{eq:ball_deriv} becomes
\begin{equation*}
\psi'(0)=2n\int_{|x|\ge R}\frac{x_1}{|x|^{2n+2}}\mathrm{d}x=0,
\end{equation*}
because the integrand is odd in $x_1$ and the domain is symmetric. 
Thus $\psi$ is strictly increasing on $[0,R)$ and the only critical point is $r=0$, that is, $\xi=0$. 
\end{proof}

\begin{remark}
The above computation is elementary but relies heavily on the radial symmetry of the ball. The function $t\mapsto t^{-2n}$ is strictly decreasing
It provides a direct verification of our main theorem in a highly symmetric situation.
\end{remark}

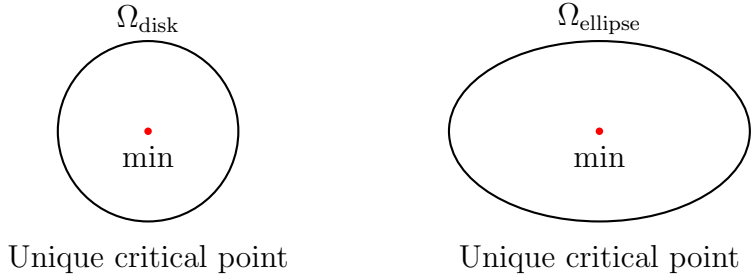
\begin{figure}[htbp]
\centering
\begin{tikzpicture}

\begin{scope}[xshift=-3cm]
\draw[thick] (0,0) circle (1.2);
\node[circle, fill=red, inner sep=1pt, label={below:min}] at (0,0) {};
\node at (0,1.5) {$\Omega_{\text{disk}}$};
\node at (0,-1.7) {Unique critical point};
\end{scope}

\begin{scope}[xshift=3cm]
\draw[thick] (0,0) ellipse (2 and 1.2);
\node[circle, fill=red, inner sep=1pt, label={below:min}] at (0,0) {};
\node at (0,1.5) {$\Omega_{\text{ellipse}}$};
\node at (0,-1.7) {Unique critical point};
\end{scope}
\end{tikzpicture}
\caption{The disk $\Omega_{\text{disk}}$ and the ellipse $\Omega_{\text{ellipse}}$ are convex domains.}
\label{fig:convex_examples}
\end{figure}

Consider now an ellipsoid centred at the origin,
\begin{equation*}
\Omega=\left\{x\in\mathbb{R}^n \mid \sum_{i=1}^n\frac{x_i^2}{a_i^2}<1\right\},\quad a_i>0, i=1, \cdots, n.
\end{equation*}
The domain is symmetric with respect to the origin: $x\in\Omega$ if and only if $-x\in\Omega$.

\begin{proposition}\label{lem:ellipsoid_symmetry}
For the ellipsoid defined above, one has $\psi_\Omega(-\xi)=\psi_\Omega(\xi)$ for every $\xi\in\Omega$. 
Hence the origin is a critical point.
\end{proposition}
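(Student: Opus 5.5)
The plan is to prove the evenness of $\psi_\Omega$ with the linear change of variables $x\mapsto -x$, using that both $\Omega$ and its complement are invariant under the reflection through the origin. Fix $\xi\in\Omega$. Since $\Omega$ is symmetric, $-\xi\in\Omega$, so $\psi_\Omega(-\xi)$ is defined. The map $T(x)=-x$ is a linear isometry with $|\det T|=1$. Since $T(\Omega)=\Omega$, it also maps $\mathbb{R}^n\setminus\Omega$ bijectively onto itself. Substituting $x=-y$ gives
\begin{equation*}
\psi_\Omega(-\xi)=\int_{\mathbb{R}^n\setminus\Omega}\frac{\mathrm{d}x}{|x+\xi|^{2n}}
=\int_{\mathbb{R}^n\setminus\Omega}\frac{\mathrm{d}y}{|-y+\xi|^{2n}}
=\int_{\mathbb{R}^n\setminus\Omega}\frac{\mathrm{d}y}{|y-\xi|^{2n}}=\psi_\Omega(\xi).
\end{equation*}
The integral is finite because $\operatorname{dist}(\xi,\mathbb{R}^n\setminus\Omega)>0$ and the integrand decays like $|x|^{-2n}$, which is integrable at infinity.

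To conclude that the origin is a critical point, I will use that $\psi_\Omega$ is differentiable on $\Omega$. This is the content of Lemma \ref{lem:psi_differentiable}, already invoked in the proof of Proposition \ref{prop:ball}: one differentiates under the integral sign, and this is justified because $|x-\xi|\ge\delta>0$ uniformly for $\xi$ in a small ball around the point. Differentiating the identity $\psi_\Omega(-\xi)=\psi_\Omega(\xi)$ gives $-\nabla\psi_\Omega(-\xi)=\nabla\psi_\Omega(\xi)$. Setting $\xi=0$ yields $\nabla\psi_\Omega(0)=-\nabla\psi_\Omega(0)$, hence $\nabla\psi_\Omega(0)=0$.

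Alternatively, one can compute directly
\begin{equation*}
\nabla\psi_\Omega(0)=2n\int_{\mathbb{R}^n\setminus\Omega}\frac{x}{|x|^{2n+2}}\,\mathrm{d}x,
\end{equation*}
which vanishes because the integrand is odd and the domain of integration is symmetric. This is the same argument as in Proposition \ref{prop:ball}.

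The only point needing care is the justification of differentiation under the integral sign. It is routine: near $\xi_0$, a dominating function is $C|x|^{-2n-1}$ for large $|x|$ and a constant on the bounded part. There is no genuine obstacle. The same argument works verbatim for any domain that is symmetric with respect to a point $p$, after translating $p$ to the origin, which is the symmetric case of Theorem \ref{thm:main}.
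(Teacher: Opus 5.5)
Your proposal is correct and follows the paper's proof: the change of variables $y=-x$, using the invariance of $\mathbb{R}^n\setminus\Omega$ under $x\mapsto -x$, gives $\psi_\Omega(-\xi)=\psi_\Omega(\xi)$, and differentiating this identity at $\xi=0$ gives $\nabla\psi_\Omega(0)=0$. Your explicit appeal to Lemma~\ref{lem:psi_differentiable} for differentiability, and the alternative odd-integrand computation, are consistent additions.
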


\begin{proof}
Using the symmetry of the domain,
\begin{equation*}
\psi_\Omega(-\xi) = \int_{\mathbb{R}^n\setminus\Omega}\frac{\mathrm{d}x}{|x+\xi|^{2n}} 
= \int_{\mathbb{R}^n\setminus\Omega}\frac{\mathrm{d}y}{|y-\xi|^{2n}} = \psi_\Omega(\xi),
\end{equation*}
where we performed the change of variables $y=-x$. 
Differentiating the identity $\psi_\Omega(-\xi)=\psi_\Omega(\xi)$ with respect to $\xi$ and evaluating at $\xi=0$ gives $\nabla\psi_\Omega(0)=0$. 
Thus $\xi=0$ is a critical point.
\end{proof}

\begin{remark}
Note that symmetry alone suffices to determine a critical point, even when no closed‑form expression is available. On the other hand, Theorem \ref{thm:main} asserts that this point is necessarily unique, as illustrated in Figure \ref{fig:convex_examples}.
\end{remark}

Now, we analyze the annular domain $\Omega = \{ x \in \mathbb{R}^n \mid a < |x| < b \}$ with $0 < a < b$. By rotational symmetry, $\psi_\Omega(\xi)$ depends only on $r = |\xi|$, so we set $\psi(r) = \psi_\Omega(r e_1)$.

\begin{proposition}\label{prop:annulus}
Let $\Omega = \{ x\in\mathbb{R}^n \mid a<|x|<b \}$ with $0<a<b$. Then $\psi''(r)>0$ for every $r\in(a,b)$. Consequently $\psi$ is strictly convex, and $\psi_\Omega$ has exactly one critical sphere $\{|\xi|=r_0\}$ with $r_0 \in (a,b)$.
\end{proposition}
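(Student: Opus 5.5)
The plan is to prove $\psi''(r)>0$ directly, by splitting the complement into its two components and expanding the kernel in spherical means. Write $\mathbb{R}^n\setminus\Omega=\overline{B_a(0)}\cup\{|x|\ge b\}$, so that $\psi=\psi_{\rm in}+\psi_{\rm out}$ with
\begin{equation*}
\psi_{\rm in}(r)=\int_{|x|\le a}\frac{\mathrm{d}x}{|x-re_1|^{2n}},\qquad \psi_{\rm out}(r)=\int_{|x|\ge b}\frac{\mathrm{d}x}{|x-re_1|^{2n}},\qquad a<r<b.
\end{equation*}
In polar coordinates $x=s\omega$ both pieces become one-dimensional integrals in $s$ of the spherical mean
\begin{equation*}
g(s,r)=\frac{1}{|\mathbb{S}^{n-1}|}\int_{\mathbb{S}^{n-1}}\frac{\mathrm{d}\omega}{|s\omega-re_1|^{2n}},
\end{equation*}
namely $\psi_{\rm in}(r)=|\mathbb{S}^{n-1}|\int_0^a g(s,r)s^{n-1}\,\mathrm{d}s$, and likewise $\psi_{\rm out}$ is the same integral over $s\in(b,\infty)$. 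It therefore suffices to show that $\partial_r^2 g(s,r)>0$ whenever $s\ne r$. Differentiating under the integral sign is justified because $s$ stays a positive distance from $r$ on compact subsets of $(a,b)$, as in Lemma \ref{lem:psi_differentiable}.

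The key step is an expansion of $g$ with positive coefficients. By rotation invariance and homogeneity,
\begin{equation*}
g(s,r)=\max(s,r)^{-2n}\,M\big(\min(s,r)/\max(s,r)\big),\qquad M(t)=\frac{1}{|\mathbb{S}^{n-1}|}\int_{\mathbb{S}^{n-1}}u(t\omega)\,\mathrm{d}\omega,
\end{equation*}
where $u(y)=|e_1-y|^{-2n}$. The function $u$ is real-analytic in $|y|<1$, so Pizzetti's formula gives
\begin{equation*}
M(t)=\sum_{k\ge0}\alpha_k\,\Delta^k u(0)\,t^{2k},\qquad \alpha_k>0 .
\end{equation*}
From $\Delta|y|^{-p}=p(p+2-n)|y|^{-p-2}$ and the fact that $p\ge 2n>n-2$ at every iteration, we get $\Delta^k|y|^{-2n}=\beta_k|y|^{-2n-2k}$ with $\beta_k>0$. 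Hence $M(t)=\sum_k c_k t^{2k}$ with all $c_k>0$.

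Consequently, for $s<r$ we have $g(s,r)=\sum_k c_k s^{2k}r^{-2n-2k}$, and each term is strictly convex in $r>0$. For $s>r$ we have $g(s,r)=\sum_k c_k s^{-2n-2k}r^{2k}$; each term is convex in $r$, and the $k=1$ term is strictly convex. Thus $\partial_r^2 g>0$ in both regimes. Integrating in $s$, we obtain $\psi_{\rm in}''>0$ and $\psi_{\rm out}''>0$ on $(a,b)$, hence $\psi''>0$. The main technical obstacle is justifying termwise differentiation of these series. This follows from normal convergence of the series and its $r$-derivatives on compact sets, since the ratio $\min(s,r)/\max(s,r)$ stays uniformly below $1$ there; in particular the series for $\psi_{\rm out}$ is dominated by a geometric tail, since $s\ge b>r$. (Alternatively, convexity of $\psi_{\rm out}$ along the ray could be taken from Theorem \ref{thm:main} applied to the ball $B_b(0)$, but the series argument handles both pieces uniformly.)

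Finally, $\psi(r)\to+\infty$ as $r\to a^+$ and as $r\to b^-$, since the integrand is non-integrable near the boundary. So the strictly convex function $\psi$ on $(a,b)$ has a minimum, and $\psi'$ is strictly increasing, so $\psi'$ vanishes at exactly one point $r_0\in(a,b)$. Because $\psi_\Omega(\xi)=\psi(|\xi|)$ and $\nabla\psi_\Omega(\xi)=\psi'(|\xi|)\,\xi/|\xi|$, the critical set of $\psi_\Omega$ is exactly the sphere $\{|\xi|=r_0\}$.
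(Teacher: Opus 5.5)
Your argument is correct. Its skeleton matches the paper's: the same inner/outer split, the same expansion of the spherical average of $|x-re_1|^{-2n}$ in powers of $\min(s,r)/\max(s,r)$, and termwise convexity in $r$.

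The genuinely different ingredient is how you get positivity of the coefficients. The paper expands the kernel with the Gegenbauer generating function $(1-2ts+s^2)^{-n}=\sum_k C_k^{(n)}(t)s^k$. It then has to evaluate $A_k=\int_{\mathbb{S}^{n-1}}C_k^{(n)}(\cos\theta)\,\mathrm{d}\omega$ by hand in Appendix~\ref{section-com}, using the three-term recurrence and a differential identity, because for $\lambda=n\neq\frac{n-2}{2}$ no orthogonality is available. You get the same series from Pizzetti's formula, with no special-function identities:
\begin{itemize}
\item evenness of the spherical mean in $t$ removes the odd terms;
\item the iterated Laplacian of $|y|^{-2n}$, where each step multiplies by $p(p+2-n)>0$, makes the even ones positive.
\end{itemize}
Multiplying out gives $\alpha_k\beta_k=\frac{(n)_k}{k!}\,\frac{n+2k}{n}=A_{2k}/|\mathbb{S}^{n-1}|$, so you recover the paper's closed form as well. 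Your route also shows at a glance that all coefficients stay positive for any kernel $|x-\xi|^{-p}$ with $p>n-2$. Your endpoint argument ($\psi\to+\infty$ at both ends plus strict convexity) is slightly cleaner than the paper's appeal to $\lim\psi'(r)=\mp\infty$ read off the series.

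One step needs an extra sentence. Real-analyticity of $u$ on $\{|y|<1\}$ does not by itself make Pizzetti's series converge to $M(t)$ for all $t<1$. For instance, $u(y)=(1+|y|^2)^{-1}$ is real-analytic on all of $\mathbb{R}^n$, yet its spherical mean is $(1+t^2)^{-1}$, whose series diverges for $t\ge1$. In your case the conclusion holds because $M$ extends holomorphically to $|t|<1$: the factorization $1-2t\cos\theta+t^2=(1-te^{i\theta})(1-te^{-i\theta})$ shows this expression does not vanish there. Equivalently, your explicit $c_k$ grow only polynomially, so the series has radius $1$ and agrees with the real-analytic $M$ on $(-1,1)$ by the identity theorem. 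Your claim of normal convergence on compact sets tacitly relies on this same fact.
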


\begin{proof}
Recall that
\begin{equation*}
\psi(r)=\int_{|x|\le a}\frac{\mathrm{d}x}{|x-re_1|^{2n}}+\int_{|x|\ge b}\frac{\mathrm{d}x}{|x-re_1|^{2n}}\equiv I_{\text{in}}(r)+I_{\text{out}}(r). \label{eq:psi_split}
\end{equation*}
For $x=\rho\omega$ with $\rho=|x|$ and $\omega\in\mathbb{S}^{n-1}$, let $\theta$ be the angle between $\omega$ and $e_1$, that is, $\cos\theta=\langle\omega,e_1\rangle$.
The classical expansion (see \cite[6.4.9, 6.4.10]{AndrewsAskeyRoy1999}) gives
\begin{equation*}
\sum_{k=0}^{\infty} C_k^{(\lambda)}(t) s^k = (1 - 2ts + s^2)^{-\lambda}, \quad |s| <1,
\end{equation*}
where $C_k^{(\lambda)}$ is called Gegenbauer polynomials. Moreover, $C_k^{(n)}(\cos \theta) \leq C_k^{(n)}(1) = \frac{(2n)_k}{k!} \sim C_n k^{2n-1}$.
we set $\lambda = n$ and $t = \cos\theta$. For the case $\rho < r$, set $s = \rho/r$, we obtain 
\begin{equation*}
\frac{1}{|x - r e_1|^{2n}}
= \frac{1}{r^{2n}}\left(1 - 2s\cos\theta + s^2\right)^{-n}
= \sum_{k=0}^{\infty} C_k^{(n)}(\cos\theta) \frac{\rho^k}{r^{2n+k}}.
\end{equation*} 
When $\rho > r$, we interchange the roles of $\rho$ and $r$ and obtain
\begin{equation*}
\frac{1}{|x - r e_1|^{2n}}= \frac{1}{\rho^{2n}}\left(1 - 2\frac{r}{\rho}\cos\theta + \frac{r^2}{\rho^2}\right)^{-n}
= \sum_{k=0}^{\infty} C_k^{(n)}(\cos\theta) \frac{r^k}{\rho^{2n+k}}.
\end{equation*}
In both cases, we have
\begin{equation} \label{eq:expansion}
\frac{1}{|x - r e_1|^{2n}}= \sum_{k=0}^{\infty} C_k^{(n)}(\cos\theta)
\frac{\left(\min\{\rho,r\}\right)^k}{\left(\max\{\rho,r\}\right)^{2n+k}}.
\end{equation}
The series converges absolutely and uniformly on compact subsets where $\rho \neq r$. 
Define
\begin{equation}
A_k = \int_{\mathbb{S}^{n-1}} C_k^{(n)}(\cos\theta)\mathrm{d}\omega. \label{eq:Ak}
\end{equation}

\textit{Inner region $|x|\le a$}.
Here $\rho\le a<r$, so $\min\{\rho,r\}=\rho$, $\max\{\rho,r\}=r$. Integrating \eqref{eq:expansion} over the sphere and using \eqref{eq:Ak},
\begin{equation*}
\int_{\mathbb{S}^{n-1}}\frac{\mathrm{d}\omega}{|x-re_1|^{2n}} = \sum_{k=0}^{\infty} A_k\frac{\rho^k}{r^{2n+k}}. \label{eq:angular_in}
\end{equation*}
Substituting into the radial integral,
\begin{equation} \begin{aligned}
I_{\text{in}}(r) =& \int_{|x|\le a}\frac{\mathrm{d}x}{|x-re_1|^{2n}}
= \int_0^a \rho^{n-1}\left(\int_{\mathbb{S}^{n-1}}\frac{\mathrm{d}\omega}{|x-re_1|^{2n}}\right)\mathrm{d}\rho \\
=& \sum_{k=0}^{\infty} A_k\frac{1}{r^{2n+k}}\int_0^a \rho^{n-1+k}\mathrm{d}\rho
= \sum_{k=0}^{\infty} \frac{A_k}{n+k}\frac{a^{n+k}}{r^{2n+k}}. \label{eq:I_in}
\end{aligned} \end{equation}

\textit{Outer region $|x|\ge b$}.
Here $\rho\ge b>r$, so $\min\{\rho,r\}=r$, $\max\{\rho,r\}=\rho$. Hence
\begin{equation*}
\int_{\mathbb{S}^{n-1}}\frac{\mathrm{d}\omega}{|x-re_1|^{2n}} = \sum_{k=0}^{\infty} A_k\frac{r^k}{\rho^{2n+k}}. \label{eq:angular_out}
\end{equation*}
Then
\begin{equation} \begin{aligned}
I_{\text{out}}(r) =& \int_{|x|\ge b}\frac{\mathrm{d}x}{|x-re_1|^{2n}}
= \int_b^\infty \rho^{n-1}\left(\int_{\mathbb{S}^{n-1}}\frac{\mathrm{d}\omega}{|x-re_1|^{2n}}\right)\mathrm{d}\rho \\
=& \sum_{k=0}^{\infty} A_kr^k\int_b^\infty \rho^{-n-1-k}\mathrm{d}\rho
= \sum_{k=0}^{\infty} \frac{A_k}{n+k}\frac{r^k}{b^{n+k}}. \label{eq:I_out}
\end{aligned} \end{equation}

Adding \eqref{eq:I_in} and \eqref{eq:I_out} we obtain the explicit representation
\begin{equation*}
\psi(r)=\sum_{k=0}^{\infty}\frac{A_k}{n+k}\left( \frac{a^{n+k}}{r^{2n+k}} + \frac{r^k}{b^{n+k}} \right). \label{eq:psi_explicit}
\end{equation*}
Differentiating term by term gives
\begin{equation} \begin{aligned}
\psi'(r)=&\sum_{k=0}^{\infty}\frac{A_k}{n+k}\left( -(2n+k)\frac{a^{n+k}}{r^{2n+k+1}} + k\frac{r^{k-1}}{b^{n+k}} \right), \label{eq:psi_deriv}
\end{aligned} \end{equation}
and
\begin{equation} \begin{aligned}
\psi''(r)=&\sum_{k=0}^{\infty}\frac{A_k}{n+k}\left( (2n+k)(2n+k+1)\frac{a^{n+k}}{r^{2n+k+2}} + k(k-1)\frac{r^{k-2}}{b^{n+k}} \right). \label{eq:psi_second}
\end{aligned} \end{equation}
All terms in \eqref{eq:psi_second} are non‑negative and at least one positive, see Appendix \ref{section-com}. Hence $\psi''(r)>0$ on $(a,b)$ and therefore $\psi'$ is strictly increasing. Therefore, the function $\psi'$ is continuous, strictly increasing. 

Moreover, using \eqref{eq:psi_deriv}, we find that
\begin{equation*}
\lim_{r\to a^+}\psi'(r)=-\infty, \quad
\lim_{r\to b^-}\psi'(r)=+\infty. 
\end{equation*}
By the intermediate value theorem there exists a unique $r_0\in(a,b)$ such that $\psi'(r_0)=0$. This $r_0$ is the unique critical point of $\psi$. By rotational symmetry, it corresponds to a unique critical sphere $\{|\xi|=r_0\}$ for $\psi_\Omega$. Thus the proposition is proved.
\end{proof}

\begin{figure}[htbp]
\centering
\begin{tikzpicture}

\begin{scope}[xshift=-3cm]
\fill[white] (0,0) circle (1.5) (0,0) circle (0.8);
\draw[thick] (0,0) circle (1.5);
\draw[thick] (0,0) circle (0.8);

\draw[dashed, red] (0,0) circle (1.15);
\node at (0,1.8) {$\Omega_{\text{single}}$};
\node at (0,-2.0) {Single annulus};
\end{scope}

\begin{scope}[xshift=3cm, scale=0.35]

\fill[white] (0,0) circle (5) (0,0) circle (4);
\draw[thick] (0,0) circle (5);
\draw[thick] (0,0) circle (4);

\fill[white] (0,0) circle (3) (0,0) circle (2);
\draw[thick] (0,0) circle (3);
\draw[thick] (0,0) circle (2);

\fill[white] (0,0) circle (1.5) (0,0) circle (0.5);
\draw[thick] (0,0) circle (1.5);
\draw[thick] (0,0) circle (0.5);

\draw[dashed, red, thick] (0,0) circle (4.5);
\draw[dashed, red, thick] (0,0) circle (2.5);
\draw[dashed, red, thick] (0,0) circle (1.0);

\node at (0,5.8) {$\Omega_{\text{multiple}}$};
\node at (0,-5.8) {Multiple annuli};
\end{scope}
\end{tikzpicture}
\caption{Left: a single annular domain with a critical sphere. Right: multiple concentric annular domains, each with a critical sphere.}
\label{fig:annulus_single_vs_multi}
\end{figure}
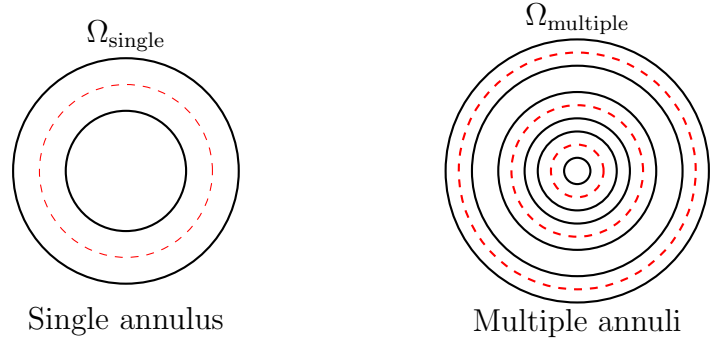

\begin{proof}[Proof of Theorem \ref{the:multiring}]
By rotational symmetry, $\psi_\Omega(\xi)$ depends only on $r=|\xi|$, we write $\psi(r)=\psi_\Omega(re_1)$. The domain of $\psi$ is the disjoint union $\bigcup_{i=1}^m (a_i,b_i)$. 

For a single annulus it was shown (see Proposition \ref{prop:annulus}) that $\psi''(r)>0$ on $(a_i,b_i)$ for every choice of $a_i,b_i$. Consequently $\psi'$ is strictly increasing and can have at most one zero. The limits $\psi'(a_i^+)=-\infty$ and $\psi'(b_i^-)=+\infty$ guarantee the existence of a unique zero $r_i\in(a_i,b_i)$. Hence each interval contributes exactly one critical sphere. 

Thus the total number of critical spheres of $\psi_\Omega$ is $m$. By rotational symmetry, every point on a critical sphere is a critical point. Consequently, $\psi_\Omega$ admits infinitely many critical points but only finitely many critical spheres (see Figure \ref{fig:annulus_single_vs_multi}).
\end{proof}

\section{Notation and preliminaries} \label{section-3}

Let $\Omega \subset \mathbb{R}^n$ ($n\ge 2$) be a bounded convex open set. For $\xi \in \Omega$ and a direction $\omega \in \mathbb{S}^{n-1}$, define $\rho(\xi,\omega)$ as the distance from $\xi$ to $\partial\Omega$ in the direction $\omega$, that is,
\begin{equation*} \label{eq:defrho1}
\rho(\xi,\omega) = \sup\{ t > 0 \mid \xi + t\omega \in \Omega\}.
\end{equation*}
Since $\Omega$ is bounded and open, $\rho(\xi,\omega)$ is well-defined and finite for all $\xi \in \Omega$ and every $\omega \in \mathbb{S}^{n-1}$. Moreover, the convexity of $\Omega$ implies that the ray $\xi + t\omega$ ($t > 0$) meets $\partial\Omega$ exactly once. Consequently, one also has the equivalent characterization
\begin{equation*} \label{eq:defrho}
\rho(\xi,\omega) = \sup\{ t > 0 \mid \xi + s\omega \in \Omega \text{ for all } 0 \le s < t \}.
\end{equation*}
For an intuitive understanding of $\rho$, see Figure \ref{fig:rho_definition}.
For further details on the basic properties, we refer the reader to \cite[Section 1.7]{Schneider2014} and the references therein, as well as to \cite{NiculescuPersson2018, Rockafellar1970}.

\begin{figure}[htbp]
\centering
\begin{tikzpicture}[
 scale=1.2,
 dot/.style={circle, fill=black, inner sep=1pt},
 boundary/.style={thick}
]
 
\draw[boundary] (0,0) ellipse (2cm and 1.2cm);
\node at (2.0,1) {$\Omega$};

\node[dot, label=left:{$\xi$}] (xi) at (0.3,0) {};

\draw[->, >=Stealth, thick, blue] (xi) -- ++(-30:1.5) coordinate (y);
\node[dot, label={[right]:$y$}] at (y) {};
\draw[dashed, blue] (xi) -- (y);
\node[blue] at (1.2,0) {$\rho(\xi,\omega)$};

\node at (-1.8,0.9) {$\partial\Omega$};
\end{tikzpicture}
\caption{Convex domain $\Omega$ with interior point $\xi$. }
\label{fig:rho_definition}
\end{figure}
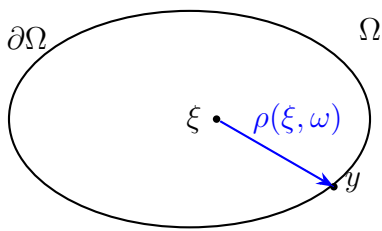

We begin by recalling some basic properties of the distance function $\rho(\xi,\omega)$ defined for a bounded convex domain $\Omega\subset\mathbb{R}^n$.

\begin{lemma} \label{lem:positive}
The function $\rho:\Omega\times\mathbb{S}^{n-1}\to(0,\infty)$ is positive. More precisely, for every $\xi\in\Omega$ there exists $\delta_\xi>0$ such that $\rho(\xi,\omega)\ge\delta_\xi$ for all $\omega\in\mathbb{S}^{n-1}$. Consequently, $\rho$ is bounded away from zero on any compact subsets of $\Omega$.
\end{lemma}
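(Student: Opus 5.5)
The proof rests on the openness of $\Omega$ and on the interior-ball estimate: the distance from $\xi$ to the boundary bounds $\rho(\xi,\omega)$ from below in every direction. Fix $\xi\in\Omega$. Since $\Omega$ is open, there is $r>0$ with $B_r(\xi)\subset\Omega$. Set $\delta_\xi:=r$; one may take $r=\operatorname{dist}(\xi,\partial\Omega)>0$, which is positive because $\partial\Omega$ is closed and does not contain $\xi$.

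For any $\omega\in\mathbb{S}^{n-1}$ and any $0\le s<r$, we have $|\xi+s\omega-\xi|=s<r$, so $\xi+s\omega\in B_r(\xi)\subset\Omega$. By the definition of $\rho(\xi,\omega)$ as a supremum, this gives $\rho(\xi,\omega)\ge r$. The same bound holds under either equivalent characterization of $\rho$: every $t<r$ is admissible, so the supremum is at least $r$. The bound is uniform in $\omega$, which proves the second claim with $\delta_\xi=r$. Positivity of $\rho$ on $\Omega\times\mathbb{S}^{n-1}$ follows at once, and finiteness is already noted in the text from the boundedness of $\Omega$.

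For the compact-subset statement, let $K\subset\Omega$ be compact. The function $\xi\mapsto\operatorname{dist}(\xi,\partial\Omega)$ is $1$-Lipschitz, hence continuous, and it is strictly positive on $\Omega$. It therefore attains a positive minimum $d_K$ on $K$. Equivalently, $d_K=\operatorname{dist}(K,\mathbb{R}^n\setminus\Omega)>0$, since $K$ is compact, $\mathbb{R}^n\setminus\Omega$ is closed, and the two sets are disjoint. Applying the previous step with $r=\operatorname{dist}(\xi,\partial\Omega)\ge d_K$ gives $\rho(\xi,\omega)\ge d_K$ for all $\xi\in K$ and all $\omega\in\mathbb{S}^{n-1}$.

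There is no real obstacle here. The only point needing care is to use the distance to the complement, rather than an arbitrary ball radius, so that the bound is uniform over compact sets. Convexity is not needed for this lemma.
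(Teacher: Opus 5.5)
Your proof is correct and is the same interior-ball argument the paper uses: a ball $B_r(\xi)\subset\Omega$ makes every $t<r$ admissible, so $\rho(\xi,\omega)\ge r$ uniformly in $\omega$. Taking $r=\operatorname{dist}(\xi,\mathbb{R}^n\setminus\Omega)$ and using its positive minimum over a compact $K$ also justifies the final ``consequently'' clause, which the paper states without proof.
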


\begin{proof}
Fix $\xi\in\Omega$. Since $\Omega$ is open, there exists $r>0$ such that the open ball $B_r(\xi)$ is contained in $\Omega$. For any direction $\omega\in\mathbb{S}^{n-1}$, the segment $\{\xi+t\omega \mid 0\le t<r\}$ lies entirely in $B_r(\xi)$, hence in $\Omega$. By definition of $\rho(\xi,\omega)$, we must have $\rho(\xi,\omega)\ge r>0$. Thus $\rho(\xi,\omega)\ge r$ for every $\omega \in \mathbb{S}^{n-1}$, establishing positivity.
\end{proof}

\begin{lemma} \label{lem:conw}
For any fixed $\xi\in\Omega$, the function $\omega\mapsto\rho(\xi,\omega)$ is continuous on $\mathbb{S}^{n-1}$.
\end{lemma}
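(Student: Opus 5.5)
Fix $\xi\in\Omega$ and $\omega_0\in\mathbb{S}^{n-1}$, and write $t_0=\rho(\xi,\omega_0)$; by Lemma \ref{lem:positive} we have $t_0>0$. The plan is to prove lower and upper semicontinuity of $\omega\mapsto\rho(\xi,\omega)$ at $\omega_0$ separately. Both follow from the characterization of $\rho$ as the exit parameter of the ray $\xi+t\omega$. They combine openness of $\Omega$, the fact that $\xi$ is an interior point, and convexity of $\Omega$.

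\emph{Lower semicontinuity.} Let $0<\varepsilon<t_0$. The point $y=\xi+(t_0-\varepsilon)\omega_0$ lies in $\Omega$, because the ray stays in $\Omega$ for $s<t_0$. Since $\Omega$ is open, there is a ball $B_\eta(y)\subset\Omega$. If $|\omega-\omega_0|<\eta/t_0$, then $\xi+(t_0-\varepsilon)\omega\in B_\eta(y)\subset\Omega$. By convexity the whole segment from $\xi$ to this point lies in $\Omega$. Hence $\rho(\xi,\omega)\ge t_0-\varepsilon$ for all $\omega$ near $\omega_0$.

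\emph{Upper semicontinuity.} This is the main step, and it uses that $\xi$ is interior, say $B_r(\xi)\subset\Omega$. The point $z=\xi+t_0\omega_0$ lies on $\partial\Omega$, so $z\notin\Omega$. Fix $\varepsilon>0$ and set $w=\xi+(t_0+\varepsilon)\omega_0$. I claim $w\notin\overline{\Omega}$ with a quantitative margin. Suppose a ball $B_\delta(w)$ met $\Omega$ at some point $w'$. The convex hull of $\{w'\}\cup B_r(\xi)$ is contained in $\Omega$. Since $z$ lies strictly between $\xi$ and $w$, with ratio $t_0/(t_0+\varepsilon)$, this hull contains a ball around $z$ of radius roughly $r\varepsilon/(t_0+\varepsilon)-\delta$. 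That ball is nonempty once $\delta$ is small, so $z\in\Omega$, a contradiction. Hence $B_\delta(w)\cap\Omega=\emptyset$ for $\delta=\delta(\varepsilon)>0$. For $\omega$ close to $\omega_0$ the point $\xi+(t_0+\varepsilon)\omega$ lies in $B_\delta(w)$, hence outside $\Omega$. By the definition of $\rho$ via the sup over connected segments, this gives $\rho(\xi,\omega)\le t_0+\varepsilon$.

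Combining the two steps gives $|\rho(\xi,\omega)-\rho(\xi,\omega_0)|\le\varepsilon$ for $\omega$ near $\omega_0$, which is the claimed continuity. The delicate point is the upper bound. There, convexity together with interiority of $\xi$ (the "cone over a ball" argument) is what prevents $\rho$ from jumping up, as it can for nonconvex domains tangent to a ray. One could alternatively note that $\rho(\xi,\omega)=1/g(\omega)$, where $g$ is the gauge (Minkowski functional) of $\Omega-\xi$. This gauge is a finite convex function on $\mathbb{R}^n$ and hence continuous. That route is quicker if one allows citing \cite{Rockafellar1970}.
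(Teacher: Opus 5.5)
Your proof is correct, and it takes a genuinely different route from the paper.

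\textbf{The paper's route.} The paper argues by sequential compactness. For $\omega_k\to\omega$ it extracts a convergent subsequence of the bounded numbers $t_k=\rho(\xi,\omega_k)$ and uses closedness of $\partial\Omega$ to get $\xi+t\omega\in\partial\Omega$ for the limit $t$. It then rules out $t<\rho(\xi,\omega)$ by openness of $\Omega$ near $\xi+s\omega$, which is essentially your lower-semicontinuity step. Finally it obtains $t\le\rho(\xi,\omega)$ ``by definition''. Since every subsequential limit equals $\rho(\xi,\omega)$, the whole sequence converges.

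\textbf{Your route.} You give a direct $\varepsilon$--$\delta$ proof split into lower and upper semicontinuity. The substantive difference is the upper bound. The paper's inequality $t\le\rho(\xi,\omega)$ does not follow from the definition alone: for a nonconvex domain a boundary point can lie on the ray beyond the first exit. It rests on a fact only asserted at the start of Section \ref{section-3}: a ray from an interior point of a convex set meets $\partial\Omega$ exactly once, i.e.\ $[\xi,y)\subset\Omega$ for every $y\in\overline{\Omega}$. Your cone-over-$B_r(\xi)$ argument proves exactly this, in quantitative form (one may take $\delta=r\varepsilon/(t_0+\varepsilon)$).

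\textbf{Comparison.} Your version is self-contained and shows precisely where convexity and the interiority of $\xi$ are used. The cost is slightly more bookkeeping than the compactness argument.

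Your gauge-function remark gives a third and shortest route. It rests on the same principle, that finite convex functions are continuous, which the paper uses for $\xi\mapsto\rho(\xi,\omega)$ right after Lemma \ref{lem:concave}.
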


\begin{proof}
Fix $\xi\in\Omega$ and let $\omega_k\to\omega$ in $\mathbb{S}^{n-1}$. Set $t_k:=\rho(\xi,\omega_k)$. Since $\Omega$ is bounded, $\{t_k\}$ is bounded and hence has a convergent subsequence. Passing to a subsequence, we may assume that $t_k\to t$ for some $t$. Set $y_k:=\xi+t_k\omega_k\in\partial\Omega$. Passing to the limit and using the closedness of $\partial\Omega$, we obtain 
\begin{equation*}
y:=\xi+t\omega\in\partial\Omega.
\end{equation*}

By definition of $\rho(\xi,\omega)$, we have $t\le\rho(\xi,\omega)$ because $y$ lies on the ray from $\xi$ in direction $\omega$ and belongs to $\partial\Omega$. Suppose for contradiction that $t<\rho(\xi,\omega)$. Choose $s$ such that $t<s<\rho(\xi,\omega)$. Then 
\begin{equation*}
\xi+s\omega\in\Omega.
\end{equation*} 
Because $\Omega$ is open, there exists a neighborhood $U$ of $\xi+s\omega$ contained in $\Omega$. The convergence $\omega_k\to\omega$ implies that for sufficiently large $k$, the points $\xi+s\omega_k$ are close to $\xi+s\omega$ and therefore lie in $U\subset\Omega$. Hence $\xi+s\omega_k\in\Omega$ for all large $k$, which contradicts the maximality of $t_k=\rho(\xi,\omega_k)$ because $s>t$ and $t_k\to t$. Thus $t=\rho(\xi,\omega)$.

We have shown that every convergent subsequence of $\{t_k\}$ converges to $\rho(\xi,\omega)$. Since $\{t_k\}$ is bounded, the whole sequence converges to $\rho(\xi,\omega)$. Hence $\omega\mapsto\rho(\xi,\omega)$ is continuous.
\end{proof}

\begin{lemma} \label{lem:concave}
For each fixed direction $\omega\in\mathbb{S}^{n-1}$, the map $\xi\mapsto\rho(\xi,\omega)$ is concave on $\Omega$.
\end{lemma}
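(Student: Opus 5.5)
To prove Lemma \ref{lem:concave}, I will check the concavity inequality directly from the definition of $\rho$, using only the convexity of $\Omega$. Fix $\omega\in\mathbb{S}^{n-1}$, take $\xi_0,\xi_1\in\Omega$ and $\lambda\in[0,1]$, and set $\xi_\lambda=(1-\lambda)\xi_0+\lambda\xi_1$, which lies in $\Omega$ by convexity. Write $t_i=\rho(\xi_i,\omega)$ for $i=0,1$. The goal is
\begin{equation*}
\rho(\xi_\lambda,\omega)\ge (1-\lambda)t_0+\lambda t_1 .
\end{equation*}

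The key step is to show that the open segment from $\xi_\lambda$ in direction $\omega$ of length $(1-\lambda)t_0+\lambda t_1$ lies in $\Omega$. Take any $s$ with $0\le s<(1-\lambda)t_0+\lambda t_1$. I will split $s=(1-\lambda)s_0+\lambda s_1$ with $0\le s_0<t_0$ and $0\le s_1<t_1$. A concrete choice is $s_i=\theta t_i$, where $\theta=s/((1-\lambda)t_0+\lambda t_1)\in[0,1)$.

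By the characterization of $\rho$ in Section \ref{section-3}, $\xi_0+s_0\omega\in\Omega$ and $\xi_1+s_1\omega\in\Omega$. By convexity of $\Omega$,
\begin{equation*}
(1-\lambda)(\xi_0+s_0\omega)+\lambda(\xi_1+s_1\omega)=\xi_\lambda+s\omega\in\Omega .
\end{equation*}
Since this holds for every $s$ in the range, the equivalent characterization
\begin{equation*}
\rho(\xi,\omega)=\sup\{t>0 \mid \xi+s\omega\in\Omega \text{ for all } 0\le s<t\}
\end{equation*}
gives the desired inequality.

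There is no serious obstacle here. The only points needing care are the degenerate endpoints $\lambda\in\{0,1\}$, which are trivial, and making sure I use the "for all $0\le s<t$" form of $\rho$ rather than the plain supremum. The two forms agree for convex $\Omega$, as noted in Section \ref{section-3}. Positivity and finiteness of the $t_i$, which make the choice of $\theta$ well defined, follow from Lemma \ref{lem:positive} and the boundedness of $\Omega$.
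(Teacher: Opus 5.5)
Your proof is correct and is essentially the paper's argument: both take convex combinations of points $\xi_i+s_i\omega\in\Omega$ with $s_i<\rho(\xi_i,\omega)$ and then use the convexity of $\Omega$. The only difference is cosmetic. The paper approximates $\rho(\xi_i,\omega)$ from below by $t_i>\rho(\xi_i,\omega)-\varepsilon$ and lets $\varepsilon\to0$, whereas your proportional splitting $s_i=\theta t_i$ handles every $s$ directly and needs no limiting step.
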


\begin{proof}
Let $\xi_1,\xi_2\in\Omega$ and $\lambda\in[0,1]$. For any $\varepsilon>0$, select $t_1<\rho(\xi_1,\omega)$ and $t_2<\rho(\xi_2,\omega)$ such that
\begin{equation*}
t_1 > \rho(\xi_1,\omega)-\varepsilon,\quad t_2 > \rho(\xi_2,\omega)-\varepsilon.
\end{equation*}
By definition of $\rho(\xi,\omega)$, the points $\xi_1+t_1\omega$ and $\xi_2+t_2\omega$ lie in $\Omega$. The convexity of $\Omega$ implies that their convex combination
\begin{equation*}
\lambda(\xi_1+t_1\omega)+(1-\lambda)(\xi_2+t_2\omega)=\left(\lambda\xi_1+(1-\lambda)\xi_2\right)+\left(\lambda t_1+(1-\lambda)t_2\right)\omega
\end{equation*}
also belongs to $\Omega$. Hence, the ray starting at $\lambda\xi_1+(1-\lambda)\xi_2$ in direction $\omega$ remains inside $\Omega$ at least up to the parameter $\lambda t_1+(1-\lambda)t_2$. Therefore,
\begin{equation*}
\rho\left(\lambda\xi_1+(1-\lambda)\xi_2,\omega\right)\ge \lambda t_1+(1-\lambda)t_2 > \lambda\rho(\xi_1,\omega)+(1-\lambda)\rho(\xi_2,\omega)-\varepsilon.
\end{equation*}
Since $\varepsilon>0$ is arbitrary, we obtain
\begin{equation*}
\rho\left(\lambda\xi_1+(1-\lambda)\xi_2,\omega\right)\ge \lambda\rho(\xi_1,\omega)+(1-\lambda)\rho(\xi_2,\omega),
\end{equation*}
which establishes the concavity inequality.
\end{proof}

From Lemma \ref{lem:concave}, for each fixed $\omega \in \mathbb{S}^{n-1}$, the function $\xi \mapsto \rho(\xi, \omega)$ is concave on $\Omega$. It follows by \cite[Theorem 6.7]{Evans2025} that $\rho(\cdot, \omega)$ is locally Lipschitz continuous. With Lemma \ref{lem:conw} in hand, we have the following conclusion.

\begin{lemma} \label{lem:rho_continuous}
The function $\rho:\Omega\times\mathbb{S}^{n-1}\to(0,\infty)$ is continuous.
\end{lemma}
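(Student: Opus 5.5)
The plan is to prove joint continuity at a point $(\xi_0,\omega_0)\in\Omega\times\mathbb{S}^{n-1}$ by splitting the increment. For $(\xi,\omega)$ near $(\xi_0,\omega_0)$ we write
\begin{equation*}
|\rho(\xi,\omega)-\rho(\xi_0,\omega_0)|\le|\rho(\xi,\omega)-\rho(\xi_0,\omega)|+|\rho(\xi_0,\omega)-\rho(\xi_0,\omega_0)|.
\end{equation*}
The second term tends to zero as $\omega\to\omega_0$ by Lemma \ref{lem:conw}. The whole difficulty is to control the first term \emph{uniformly in} $\omega$. The local Lipschitz property of each concave function $\rho(\cdot,\omega)$ alone is not enough, since its Lipschitz constant might a priori depend on $\omega$.

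To obtain uniformity, I would use the standard quantitative form of the local Lipschitz estimate for concave functions. Let $\phi$ be concave on a ball $B_{2r}(\xi_0)\subset\Omega$ with $m\le\phi\le M$ there. Then $\phi$ is Lipschitz on $B_r(\xi_0)$ with constant $(M-m)/r$. The proof is the usual one: for $\xi,\eta\in B_r(\xi_0)$, extend the segment from $\eta$ through $\xi$ to a point $z$ with $|z-\xi|=r$, and apply concavity along the line. Now pick $r>0$ with $\overline{B_{2r}(\xi_0)}\subset\Omega$. By Lemma \ref{lem:positive}, or directly from the inclusion of the ball, we have $\rho(\xi,\omega)\ge \operatorname{dist}(\xi,\partial\Omega)\ge r>0$ on $B_{2r}(\xi_0)$. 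Also $\rho(\xi,\omega)\le\operatorname{diam}\Omega$ for every $\xi,\omega$. Hence we may take $m=0$ and $M=\operatorname{diam}\Omega$, independently of $\omega$. Combined with Lemma \ref{lem:concave}, this yields
\begin{equation*}
|\rho(\xi,\omega)-\rho(\eta,\omega)|\le \frac{\operatorname{diam}\Omega}{r}|\xi-\eta|,\qquad \xi,\eta\in B_r(\xi_0),\ \omega\in\mathbb{S}^{n-1}.
\end{equation*}

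With this bound the first term is at most $(\operatorname{diam}\Omega/r)|\xi-\xi_0|$, so it vanishes as $\xi\to\xi_0$, and joint continuity follows. The main obstacle is the uniform Lipschitz estimate. I expect it to be the only step needing care: one must check that the extension point $z$ stays inside $B_{2r}(\xi_0)$, and that the constant depends only on the uniform two-sided bounds for $\rho$. Positivity of $\rho$ is automatic from Lemma \ref{lem:positive}, since $\rho>0$ everywhere.
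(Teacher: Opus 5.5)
Your proposal is correct and follows essentially the paper's route. Concavity of $\rho(\cdot,\omega)$ (Lemma \ref{lem:concave}) gives Lipschitz control in $\xi$, and this is combined with continuity in $\omega$ (Lemma \ref{lem:conw}); your explicit $\omega$-independent constant $\operatorname{diam}\Omega/r$ supplies the uniformity that the paper's one-line appeal to local Lipschitz continuity leaves implicit, and which the paper only establishes later, in Lemma \ref{lem:rho_lipschitz}.

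One harmless slip: the bound $\operatorname{dist}(\xi,\partial\Omega)\ge r$ holds on $B_r(\xi_0)$ rather than on all of $B_{2r}(\xi_0)$, but since you only use the lower bound $m=0$, nothing is affected.
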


Now, using spherical coordinates centered at $\xi$, we write
\begin{equation*}
\psi_\Omega(\xi) = \int_{\mathbb{R}^n\setminus\Omega} \frac{\mathrm{d}x}{|x-\xi|^{2n}}
= \int_{\mathbb{S}^{n-1}} \int_{\rho(\xi,\omega)}^{+\infty} \frac{r^{n-1}}{r^{2n}} \mathrm{d}r \mathrm{d}\omega
= \int_{\mathbb{S}^{n-1}} \int_{\rho}^{+\infty} r^{-n-1} \mathrm{d}r \mathrm{d}\omega.
\end{equation*}
Evaluating the inner integral gives $\int_{\rho}^{+\infty} r^{-n-1} \mathrm{d}r = \frac{1}{n} \rho^{-n}$. Thus
\begin{equation*}
\psi_\Omega(\xi) = \frac{1}{n} \int_{\mathbb{S}^{n-1}} \rho(\xi,\omega)^{-n} \mathrm{d}\omega.
\end{equation*}
For convenience we define
\begin{equation*}
F(\xi) := n \psi_\Omega(\xi) = \int_{\mathbb{S}^{n-1}} \rho(\xi,\omega)^{-n} \mathrm{d}\omega.
\end{equation*}
Clearly, critical points of $\psi_\Omega$ coincide with those of $F$. 

\begin{remark}
Direct computation gives that
\begin{equation*}
\nabla\psi_\Omega(\xi)=2n\int_{\mathbb{R}^n\setminus\Omega}\frac{x-\xi}{|x-\xi|^{2n+2}} \mathrm{d}x,
\end{equation*}
and
\begin{equation*}
\nabla^2\psi_\Omega(\xi)=2n\int_{\mathbb{R}^n\setminus\Omega}\left(-\frac{I}{|x-\xi|^{2n+2}}+(2n+2)\frac{(x-\xi)(x-\xi)^{\top}}{|x-\xi|^{2n+4}}\right)\mathrm{d}x.
\end{equation*}
For any non-zero vector $v\in\mathbb{R}^n$, the associated quadratic form is
\begin{equation*}
v^{\top}\nabla^2\psi_\Omega(\xi)v
=2n\int_{\mathbb{R}^n\setminus\Omega}\left(-\frac{|v|^2}{|x-\xi|^{2n+2}}+(2n+2)\frac{(v\cdot(x-\xi))^2}{|x-\xi|^{2n+4}}\right)\mathrm{d}x.
\end{equation*}
The positivity of the whole integral is \textbf{not obvious} and cannot be decided by a pointwise estimate. However, in the Cartesian representation the convexity is hidden in the integration region $\mathbb{R}^n\setminus\Omega$ and does not appear explicitly in the algebraic structure of the integrand. The spherical reduction is an essential step.
\end{remark}

\section{Strict convexity and uniqueness}  \label{section-4}
In this section, we prove that the reduced functional $F(\xi)=n\psi_\Omega(\xi)$ is strictly convex on $\Omega$. The proof is divided into two steps. First, we establish convexity by exploiting the concavity of $\rho(\cdot,\omega)$ and the convexity of the power function $t^{-n}$. Then we refine the argument to obtain strict convexity. Finally, we show that strict convexity forces uniqueness of the critical point, thereby completing the proof of Theorem \ref{thm:main}.

\subsection{Convexity of $F$}

We now show that $F$ is strictly convex in $\Omega$. First, we establish convexity.
For simplicity, we set $g(t)=t^{-n}$ for $t>0$. Then $g'(t)=-nt^{-n-1}<0$ and $g''(t)=n(n+1)t^{-n-2}>0$, so $g$ is strictly convex and decreasing.

\begin{lemma} \label{lem:convexity}
For each fixed $\omega \in \mathbb{S}^{n-1}$, the function $\xi \mapsto \rho(\xi,\omega)^{-n}$ is convex on $\Omega$. Consequently, $F$ is convex on $\Omega$.
\end{lemma}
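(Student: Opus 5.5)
The plan is the standard composition argument: a convex, decreasing function of a concave function is convex. Fix $\omega\in\mathbb{S}^{n-1}$ and take $\xi_1,\xi_2\in\Omega$, $\lambda\in[0,1]$, and set $\xi_\lambda=\lambda\xi_1+(1-\lambda)\xi_2\in\Omega$; this point lies in $\Omega$ by convexity. By Lemma \ref{lem:positive}, all values $\rho(\xi_i,\omega)$ and $\rho(\xi_\lambda,\omega)$ lie in $(0,\infty)$, where $g(t)=t^{-n}$ is defined. Lemma \ref{lem:concave} gives
\begin{equation*}
\rho(\xi_\lambda,\omega)\ge \lambda\rho(\xi_1,\omega)+(1-\lambda)\rho(\xi_2,\omega).
\end{equation*}
Since $g$ is decreasing, this inequality reverses after applying $g$:
\begin{equation*}
g(\rho(\xi_\lambda,\omega))\le g\bigl(\lambda\rho(\xi_1,\omega)+(1-\lambda)\rho(\xi_2,\omega)\bigr).
\end{equation*}
Convexity of $g$ then bounds the right-hand side by $\lambda g(\rho(\xi_1,\omega))+(1-\lambda)g(\rho(\xi_2,\omega))$. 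This proves the first claim of Lemma \ref{lem:convexity}.

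To pass to $F$, integrate the pointwise inequality over $\omega\in\mathbb{S}^{n-1}$, which preserves inequalities. First I would check that every integral involved is finite. By Lemma \ref{lem:rho_continuous}, the map $\omega\mapsto\rho(\xi,\omega)^{-n}$ is continuous on the compact sphere for each fixed $\xi$. By Lemma \ref{lem:positive}, it is bounded above by $\delta_\xi^{-n}$. So $F(\xi)$ is finite for each $\xi\in\Omega$, and integration gives
\begin{equation*}
F(\xi_\lambda)\le\lambda F(\xi_1)+(1-\lambda)F(\xi_2).
\end{equation*}

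No step is a real obstacle. The only points needing care are that $\xi_\lambda$ stays in $\Omega$, that all arguments of $g$ are positive, and that the integrals are finite; these are handled by convexity of $\Omega$, Lemma \ref{lem:positive}, and Lemma \ref{lem:rho_continuous}. The same argument works for any convex decreasing $f$ in place of $g$, which gives part (1) of Theorem \ref{thm:main-2}.
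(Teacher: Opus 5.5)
Your proposal is correct and follows the paper's proof essentially verbatim. The argument is the same: concavity of $\rho(\cdot,\omega)$ from Lemma \ref{lem:concave}, then monotonicity and convexity of $g(t)=t^{-n}$, then integration over $\mathbb{S}^{n-1}$. Your extra check that $F(\xi)$ is finite and the integrand measurable, using Lemma \ref{lem:positive} and continuity in $\omega$, is a detail the paper leaves implicit.
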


\begin{proof}
Fix $\omega \in \mathbb{S}^{n-1}$. The function $\xi \mapsto \rho(\xi,\omega)$ is concave and positive on $\Omega$ by Lemma \ref{lem:rho_continuous}. 
For any $\xi_1,\xi_2\in\Omega$ and $\lambda\in[0,1]$, concavity of $\rho(\cdot,\omega)$ yields
\begin{equation*}
\rho(\lambda\xi_1+(1-\lambda)\xi_2,\omega) \ge \lambda\rho(\xi_1,\omega)+(1-\lambda)\rho(\xi_2,\omega).
\end{equation*}
Since $g$ is decreasing,
\begin{equation} \label{eq:g}
g\left(\rho(\lambda\xi_1+(1-\lambda)\xi_2,\omega)\right) \le g\left(\lambda\rho(\xi_1,\omega)+(1-\lambda)\rho(\xi_2,\omega)\right).
\end{equation}
Now convexity of $g$ gives
\begin{equation*}
g\left(\lambda\rho(\xi_1,\omega)+(1-\lambda)\rho(\xi_2,\omega)\right) \le \lambda g(\rho(\xi_1,\omega)) + (1-\lambda)g(\rho(\xi_2,\omega)).
\end{equation*}
Combining the inequalities, we obtain
\begin{equation*}
\rho(\lambda\xi_1+(1-\lambda)\xi_2,\omega)^{-n} \le \lambda\rho(\xi_1,\omega)^{-n} + (1-\lambda)\rho(\xi_2,\omega)^{-n},
\end{equation*}
which proves that $\xi\mapsto\rho(\xi,\omega)^{-n}$ is convex. Integrating over $\mathbb{S}^{n-1}$, we conclude that $F$ is convex on $\Omega$.
\end{proof}

The following lemma is geometrically intuitive: moving the starting point along a fixed direction simply shifts the distance to the boundary by the same amount. However, for the sake of completeness, we provide a rigorous proof based solely on definition of $\rho(\xi,\omega)$.

\begin{lemma}\label{lem:rho_linear}
Let $\Omega\subset\mathbb{R}^n$ be a convex open set and let $\xi_1,\xi_2\in\Omega$ be two distinct points. Set
\begin{equation*}
\omega := \frac{\xi_2-\xi_1}{|\xi_2-\xi_1|}\in\mathbb{S}^{n-1}.
\end{equation*}
Then
\begin{equation*}
\rho(\xi_2,\omega) = \rho(\xi_1,\omega) - |\xi_1-\xi_2|.
\end{equation*}
\end{lemma}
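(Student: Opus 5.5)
We compare the two sets that define $\rho(\xi_1,\omega)$ and $\rho(\xi_2,\omega)$. Write $d:=|\xi_1-\xi_2|$, so that $\xi_2=\xi_1+d\omega$. Then for every $s\ge 0$,
\begin{equation*}
\xi_2+s\omega=\xi_1+(d+s)\omega .
\end{equation*}
So the ray from $\xi_2$ in direction $\omega$ is just the tail, beyond parameter $d$, of the ray from $\xi_1$. We use the second characterization of $\rho$, $\rho(\xi,\omega)=\sup\{t>0 \mid \xi+s\omega\in\Omega \text{ for all } 0\le s<t\}$, which holds here because $\Omega$ is convex and open.

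\emph{Step 1: $\rho(\xi_1,\omega)>d$.} By convexity of $\Omega$, the closed segment $[\xi_1,\xi_2]$ lies in $\Omega$. Since $\Omega$ is open, a small ball around $\xi_2$ is contained in $\Omega$, so $\xi_1+s\omega\in\Omega$ for all $0\le s<d+\varepsilon$ with some $\varepsilon>0$. Hence $\rho(\xi_1,\omega)\ge d+\varepsilon>d$, and the claimed value $\rho(\xi_1,\omega)-d$ is positive.

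\emph{Step 2: the inequality $\ge$.} Let $t>0$ satisfy $\xi_1+s\omega\in\Omega$ for all $0\le s<t$, with $t>d$. Then $\xi_2+s\omega=\xi_1+(d+s)\omega\in\Omega$ for all $0\le s<t-d$. Therefore $\rho(\xi_2,\omega)\ge t-d$. Taking the supremum over such $t$ gives $\rho(\xi_2,\omega)\ge\rho(\xi_1,\omega)-d$.

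\emph{Step 3: the inequality $\le$.} Let $t>0$ satisfy $\xi_2+s\omega\in\Omega$ for all $0\le s<t$. Then $\xi_1+u\omega\in\Omega$ for $u\in[0,d]$ by Step 1, and for $u\in[d,d+t)$ by hypothesis. Hence $\rho(\xi_1,\omega)\ge d+t$. Taking the supremum over $t$ gives $\rho(\xi_1,\omega)\ge\rho(\xi_2,\omega)+d$, which combined with Step 2 yields the identity.

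The only point requiring care is Step 3. There we need the initial piece of the ray from $\xi_1$, namely $[\xi_1,\xi_2]$, to lie in $\Omega$. This is exactly where convexity enters, through Step 1. Without convexity the ray from $\xi_1$ could exit and re-enter $\Omega$ before reaching $\xi_2$, and then the identity would fail. Everything else is a direct manipulation of suprema.
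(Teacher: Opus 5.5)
Your proof is correct and follows the same route as the paper: both write $\xi_2+s\omega=\xi_1+(d+s)\omega$ and obtain the two inequalities by shifting the ray parameter, with convexity entering only to guarantee $[\xi_1,\xi_2]\subset\Omega$ and hence $\rho(\xi_1,\omega)>d$. The only cosmetic difference is in the upper bound. The paper argues that $\xi_2+t\omega\notin\Omega$ once $t>\rho(\xi_1,\omega)-d$, whereas you concatenate $[\xi_1,\xi_2]$ with an admissible interval from $\xi_2$, which lets you work with the single ``contiguous segment'' characterization of $\rho$ throughout.
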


\begin{proof}
Since $\Omega$ is convex, $\xi_2 = \xi_1 + t_0\omega$ with $t_0 := |\xi_1-\xi_2|$ belongs to $\Omega$. Hence $t_0 < \rho(\xi_1,\omega)$ by definition of $\rho(\xi_1,\omega)$. 
For any $t\ge 0$, we have $\xi_2 + t\omega = \xi_1 + (t_0 + t)\omega$. By definition of $\rho(\xi_1,\omega)$,
\begin{equation*}
\xi_1 + s\omega \in \Omega \quad \text{for all } s < \rho(\xi_1,\omega).
\end{equation*}

If $t > \rho(\xi_1,\omega) - t_0$, then $t_0 + t > \rho(\xi_1,\omega)$ and consequently $\xi_2 + t\omega \notin \Omega$. Therefore every admissible $t$ for $\xi_2$ must satisfy $t \le \rho(\xi_1,\omega) - t_0$, which implies
\begin{equation}\label{eq:upper}
\rho(\xi_2,\omega) \le \rho(\xi_1,\omega) - t_0.
\end{equation}
If $t$ satisfies $0 \le t < \rho(\xi_1,\omega) - t_0$. Then $t_0 + t < \rho(\xi_1,\omega)$, so we have 
\begin{equation*}
\xi_2 + t\omega = \xi_1 + (t_0 + t)\omega \in \Omega.
\end{equation*} 
Hence every such $t$ is admissible for $\xi_2$, and therefore
\begin{equation}\label{eq:lower}
\rho(\xi_2,\omega) \ge \rho(\xi_1,\omega) - t_0.
\end{equation}
Combining \eqref{eq:upper} and \eqref{eq:lower} yields the desired equality.
\end{proof}

\begin{lemma} \label{lem:strictconvex}
$F$ is strictly convex on $\Omega$.
\end{lemma}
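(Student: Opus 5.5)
We prove strict convexity by refining the chain of inequalities in the proof of Lemma~\ref{lem:convexity}. Fix distinct $\xi_1,\xi_2\in\Omega$ and $\lambda\in(0,1)$, and set $\xi_\lambda=\lambda\xi_1+(1-\lambda)\xi_2$. For every $\omega\in\mathbb{S}^{n-1}$, the pointwise inequality
\begin{equation*}
\rho(\xi_\lambda,\omega)^{-n}\le\lambda\rho(\xi_1,\omega)^{-n}+(1-\lambda)\rho(\xi_2,\omega)^{-n}
\end{equation*}
already holds by Lemma~\ref{lem:convexity}. Since both sides are continuous in $\omega$ by Lemma~\ref{lem:rho_continuous}, the nonnegative difference
\begin{equation*}
D(\omega):=\lambda\rho(\xi_1,\omega)^{-n}+(1-\lambda)\rho(\xi_2,\omega)^{-n}-\rho(\xi_\lambda,\omega)^{-n}
\end{equation*}
is a continuous function on $\mathbb{S}^{n-1}$. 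It therefore suffices to find one direction $\omega_0$ with $D(\omega_0)>0$. Continuity then gives $D>0$ on a neighbourhood of $\omega_0$ of positive surface measure, so integrating over the sphere yields $F(\xi_\lambda)<\lambda F(\xi_1)+(1-\lambda)F(\xi_2)$.

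To produce such a direction, take $\omega_0=(\xi_2-\xi_1)/|\xi_2-\xi_1|$, the direction of the segment. Lemma~\ref{lem:rho_linear} gives $\rho(\xi_2,\omega_0)=\rho(\xi_1,\omega_0)-|\xi_2-\xi_1|$. Applying the same lemma to the pair $\xi_1,\xi_\lambda$ (here $\xi_\lambda-\xi_1=(1-\lambda)(\xi_2-\xi_1)$ points along $\omega_0$) gives $\rho(\xi_\lambda,\omega_0)=\rho(\xi_1,\omega_0)-(1-\lambda)|\xi_2-\xi_1|$. So, writing $\rho_i:=\rho(\xi_i,\omega_0)$, along $\omega_0$ the function $\rho$ is affine: $\rho(\xi_\lambda,\omega_0)=\lambda\rho_1+(1-\lambda)\rho_2$. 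The concavity step is an equality here, and $\rho_1\neq\rho_2$ because they differ by $|\xi_2-\xi_1|>0$. Strict convexity of $g(t)=t^{-n}$ at two distinct points then gives
\begin{equation*}
g(\lambda\rho_1+(1-\lambda)\rho_2)<\lambda g(\rho_1)+(1-\lambda)g(\rho_2),
\end{equation*}
that is, $D(\omega_0)>0$.

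The step needing the most care is passing from one direction to a set of positive measure. For that we only need $D$ to be continuous at $\omega_0$, which is the joint continuity of $\rho$ in $\omega$ at each of the three fixed points $\xi_1,\xi_2,\xi_\lambda$ (Lemma~\ref{lem:conw} suffices), together with $\rho\ge\delta>0$ there (Lemma~\ref{lem:positive}), so that $t\mapsto t^{-n}$ is continuous on the relevant range. I expect no further obstacle. One could also use $-\omega_0$ in the same way, but one direction is enough.
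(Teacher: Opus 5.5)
Your proof is correct and follows the paper's argument. Both take the segment direction $\omega_0$, use Lemma~\ref{lem:rho_linear} to get $\rho(\xi_1,\omega_0)\neq\rho(\xi_2,\omega_0)$, apply strict convexity of $t^{-n}$, and use continuity in $\omega$ to obtain a set of positive measure. The only cosmetic difference is the order of the last two steps: you propagate positivity of the defect $D$ from $\omega_0$ to a neighbourhood, whereas the paper first propagates $\rho(\xi_1,\omega)\neq\rho(\xi_2,\omega)$ to a neighbourhood and then applies strict convexity pointwise there.
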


\begin{proof}
Take any distinct $\xi_1,\xi_2\in\Omega$ and $\lambda\in(0,1)$. Set 
\begin{equation*}
\omega_0 = \frac{\xi_2-\xi_1}{|\xi_2-\xi_1|},
\end{equation*} 
the unit direction from $\xi_1$ to $\xi_2$. Since $\Omega$ is convex, Lemma \ref{lem:rho_linear} implies that
\begin{equation*}
\rho(\xi_2,\omega_0) = \rho(\xi_1,\omega_0) - |\xi_1-\xi_2|.
\end{equation*}
Since $|\xi_1-\xi_2|>0$, we obtain 
$
\rho(\xi_1,\omega_0) \neq \rho(\xi_2,\omega_0).
$
By continuity of $\rho(\cdot,\cdot)$ (Lemma \ref{lem:rho_continuous}), there exists an open neighborhood $U$ of $\omega_0$ in $\mathbb{S}^{n-1}$ such that for every $\omega\in U$,
\begin{equation*}
\rho(\xi_1,\omega) \neq \rho(\xi_2,\omega).
\end{equation*}

Fix $\omega\in U$. Recall \eqref{eq:g} that 
\begin{equation*}
g\left(\rho(\lambda\xi_1+(1-\lambda)\xi_2,\omega)\right) \le g\left(\lambda\rho(\xi_1,\omega)+(1-\lambda)\rho(\xi_2,\omega)\right).
\end{equation*}
Since $g$ is strictly convex and $\rho(\xi_1,\omega)\neq\rho(\xi_2,\omega)$, we obtain
\begin{equation*}
g\left(\lambda\rho(\xi_1,\omega)+(1-\lambda)\rho(\xi_2,\omega)\right) < \lambda g(\rho(\xi_1,\omega)) + (1-\lambda)g(\rho(\xi_2,\omega)).
\end{equation*}
Combining the last two inequalities yields
\begin{equation*}
\rho(\lambda\xi_1+(1-\lambda)\xi_2,\omega)^{-n} < \lambda\rho(\xi_1,\omega)^{-n} + (1-\lambda)\rho(\xi_2,\omega)^{-n}, \quad \text{for every $\omega\in U$. }
\end{equation*}
For $\omega\notin U$, we only have the non-strict inequality from Lemma \ref{lem:convexity}. Integrating over $\mathbb{S}^{n-1}$ and noting that $U$ has positive measure, we get
\begin{equation*}
F(\lambda\xi_1+(1-\lambda)\xi_2) < \lambda F(\xi_1) + (1-\lambda)F(\xi_2).
\end{equation*}
Thus $F$ is strictly convex on $\Omega$.
\end{proof}

\begin{remark}
We now present a \textbf{formal} computation. Assuming that $\rho$ is sufficiently smooth and that the interchange of differentiation and integration is justified (this can be made rigorous for $C^2$ convex domains, see Appendix \ref{section-appendix}), we obtain
\begin{equation*}
\nabla_\xi\left(\rho(\xi,\omega)^{-n}\right) = -n \rho(\xi,\omega)^{-n-1} \nabla_\xi\rho.
\end{equation*}
Differentiating once more,
\begin{equation*}
\nabla_\xi^2\left(\rho(\xi,\omega)^{-n}\right)
= n(n+1)\rho(\xi,\omega)^{-n-2} \nabla_\xi\rho\otimes\nabla_\xi\rho
- n\rho(\xi,\omega)^{-n-1} \nabla_\xi^2\rho.
\end{equation*}
The first term is a positive semidefinite rank-1 matrix, and the second term is positive semidefinite because $\nabla_\xi^2\rho$ is negative semidefinite. Therefore each $\nabla_\xi^2\left(\rho(\xi,\omega)^{-n}\right)$ is positive semidefinite, and integrating over $\omega$ yields that $\nabla^2F(\xi)$ is positive semidefinite. 
\end{remark}

\subsection{Uniqueness of the critical point}

With the strict convexity of $F$ established in the previous subsection, we now prove the uniqueness of the critical point of $\psi_\Omega$. 

\begin{lemma}\label{lem:psi_differentiable}
The function $\psi_\Omega:\Omega\to\mathbb{R}$ defined by
\begin{equation*}
\psi_\Omega(\xi)=\int_{\mathbb{R}^n\setminus\Omega}\frac{\mathrm{d}x}{|x-\xi|^{2n}}
\end{equation*}
is continuously differentiable on $\Omega$. Moreover, for every $\xi\in\Omega$,
\begin{equation*}
\nabla\psi_\Omega(\xi)=2n\int_{\mathbb{R}^n\setminus\Omega}\frac{x-\xi}{|x-\xi|^{2n+2}} \mathrm{d}x.
\end{equation*}
\end{lemma}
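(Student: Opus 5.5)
The plan is to differentiate under the integral sign, justified by a locally uniform domination argument. Fix $\xi_0\in\Omega$ and let $d:=\operatorname{dist}(\xi_0,\partial\Omega)>0$. For $\xi\in B_{d/2}(\xi_0)$ and $x\in\mathbb{R}^n\setminus\Omega$ we have $|x-\xi|\ge |x-\xi_0|-d/2\ge \tfrac12|x-\xi_0|$, since $|x-\xi_0|\ge d$. We also have $|x-\xi|\ge d/2$. The integrand $k(x,\xi)=|x-\xi|^{-2n}$ is smooth in $\xi$ on this ball, with
\begin{equation*}
\nabla_\xi k(x,\xi)=2n\frac{x-\xi}{|x-\xi|^{2n+2}},\qquad |\nabla_\xi k|\le 2n|x-\xi|^{-2n-1}\le 2n\,2^{2n+1}|x-\xi_0|^{-2n-1}.
\end{equation*}
The dominating function $G(x)=C|x-\xi_0|^{-2n-1}\mathbf 1_{\{|x-\xi_0|\ge d\}}$ is integrable on $\mathbb{R}^n$. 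In polar coordinates around $\xi_0$ its integral is $C|\mathbb{S}^{n-1}|\int_d^\infty r^{-n-2}\,\mathrm dr<\infty$. The same kind of bound, $k\le 2^{2n}|x-\xi_0|^{-2n}$, shows that $\psi_\Omega$ is finite near $\xi_0$.

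For the differentiation itself, I apply the mean value theorem along segments in $B_{d/2}(\xi_0)$. For $|h|$ small, each difference quotient $\big(k(x,\xi+he_i)-k(x,\xi)\big)/h$ is bounded by $G(x)$. Dominated convergence then gives $\partial_i\psi_\Omega(\xi)=\int_{\mathbb{R}^n\setminus\Omega}\partial_{\xi_i}k\,\mathrm dx$, which is the stated formula for $\nabla\psi_\Omega$.

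Continuity of the gradient follows from the same domination. If $\xi_k\to\xi$ inside $B_{d/2}(\xi_0)$, then $\nabla_\xi k(x,\xi_k)\to\nabla_\xi k(x,\xi)$ pointwise for every $x\notin\Omega$, because $x\ne\xi$. Every term is bounded by $G$, so dominated convergence gives $\nabla\psi_\Omega(\xi_k)\to\nabla\psi_\Omega(\xi)$. Since $\xi_0$ was arbitrary, $\psi_\Omega\in C^1(\Omega)$.

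The only step needing care is the uniform lower bound $|x-\xi|\ge\tfrac12|x-\xi_0|$ on the complement. It must hold uniformly for $\xi$ near $\xi_0$, which is exactly why the radius $d/2$ is used. It also gives the decay at infinity needed for integrability, and that decay works because the exponent $2n+1>n$. Neither convexity nor boundary regularity is needed here. The proof uses only that $\mathbb{R}^n\setminus\Omega$ stays at positive distance from $\xi_0$.
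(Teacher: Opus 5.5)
Your proposal is correct and takes essentially the same route as the paper: you differentiate under the integral sign by dominated convergence, using that $\mathbb{R}^n\setminus\Omega$ stays at positive distance from a neighborhood of $\xi_0$. Your explicit $\xi$-independent majorant $G(x)=C|x-\xi_0|^{-2n-1}\mathbf{1}_{\{|x-\xi_0|\ge d\}}$, obtained from $|x-\xi|\ge\tfrac12|x-\xi_0|$, makes precise a step the paper leaves implicit, since the paper's stated bound $2n|x-\xi|^{-2n-1}$ still depends on $\xi$.
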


\begin{proof}
Fix $\xi_0\in\Omega$ and choose a compact neighborhood $K\subset\Omega$ of $\xi_0$. Set $\delta:=\operatorname{dist}(K,\partial\Omega)>0$. For any $\xi\in K$ and $x\in\mathbb{R}^n\setminus\Omega$, we have $|x-\xi|\ge\delta$. The integrand $f(\xi,x):=|x-\xi|^{-2n}$ is smooth in $\xi$, and its partial derivatives satisfy
\begin{equation*}
\left|\partial_{\xi_i}f(\xi,x)\right|=\left|2n (x_i-\xi_i)|x-\xi|^{-2n-2}\right|\le 2n|x-\xi|^{-2n-1}.
\end{equation*}
Using the dominated convergence theorem, $\psi_\Omega$ is differentiable at every $\xi\in K$ and the gradient formula holds. The continuity of $\nabla\psi_\Omega$ follows from the continuity of the integrand in $\xi$ and the uniform bound.
\end{proof}

\begin{proof}[Proof of Theorem \ref{thm:main}]
We have shown that
\begin{equation*}
F(\xi)=n\psi_\Omega(\xi)=\int_{\mathbb{S}^{n-1}}\rho(\xi,\omega)^{-n} \mathrm{d}\omega
\end{equation*}
is strictly convex on $\Omega$ (Lemma \ref{lem:strictconvex}). Note that we have not established the everywhere differentiability of $F$ directly. However, since $F = n\psi$ and $\psi$ is differentiable, we obtain the differentiability of $F$. 
Recall that for a differentiable convex function $F$ on $\Omega$, we have the first-order inequality
\begin{equation*}
F(y) \ge F(x) + \nabla F(x)^\top (y-x) \quad \text{for all } x,y\in\Omega
\end{equation*}
(see \cite[Theorem 1.5.12]{Schneider2014}). If $x$ is a critical point, that is, $\nabla F(x)=0$, then $F(y)\ge F(x)$ for every $y\in\Omega$. Hence any critical point of a convex function is automatically a global minimizer.

Now, suppose for contradiction that $F$ possesses two distinct critical points $\xi_1,\xi_2\in\Omega$. By the previous observation both are global minimizers, so $F(\xi_1)=F(\xi_2)=m$, where $m$ denotes the global minimum value. Because $F$ is strictly convex, for any $t\in(0,1)$, we have
\begin{equation*}
F\left((1-t)\xi_1+t\xi_2\right) < (1-t)F(\xi_1)+tF(\xi_2)=m.
\end{equation*}
But $(1-t)\xi_1+t\xi_2$ belongs to $\Omega$, contradicting definition of $m$. Therefore two distinct critical points cannot exist. 
Consequently, $F$ has exactly one critical point, and that point is its global minimum. Since $\psi_\Omega = \frac{1}{n}F$, the same conclusion holds for $\psi_\Omega$. This completes the proof of Theorem \ref{thm:main}.
\end{proof}

\begin{proof}[The proof of Theorem \ref{thm:main-2}]
We split the proof into three parts.

\textit{1. Convexity of $\Phi$.} Fix $\omega\in\mathbb{S}^{n-1}$. Because $\Omega$ is convex, the function $\xi\mapsto\rho(\xi,\omega)$ is concave (Lemma \ref{lem:concave}). Since $f$ is convex and decreasing, the composition $g_\omega(\xi):=f(\rho(\xi,\omega))$ is convex. Indeed, for any $\xi_1,\xi_2\in\Omega$ and $\lambda\in[0,1]$,
\begin{equation*}
\rho(\lambda\xi_1+(1-\lambda)\xi_2,\omega)\ge \lambda\rho(\xi_1,\omega)+(1-\lambda)\rho(\xi_2,\omega)
\end{equation*}
by concavity. As $f$ is decreasing,
\begin{equation*} \begin{aligned}
g_\omega(\lambda\xi_1+(1-\lambda)\xi_2)= &f\left(\rho(\lambda\xi_1+(1-\lambda)\xi_2,\omega)\right) \\
\le &f\left(\lambda\rho(\xi_1,\omega)+(1-\lambda)\rho(\xi_2,\omega)\right).
\end{aligned} \end{equation*}
Using convexity of $f$,
\begin{equation*} \begin{aligned}
f\left(\lambda\rho(\xi_1,\omega)+(1-\lambda)\rho(\xi_2,\omega)\right)
\le &\lambda f(\rho(\xi_1,\omega))+(1-\lambda)f(\rho(\xi_2,\omega)) \\
=& \lambda g_\omega(\xi_1)+(1-\lambda)g_\omega(\xi_2).
\end{aligned} \end{equation*}
Thus $g_\omega$ is convex. Integrating over $\omega$ preserves convexity, so $\Phi$ is convex.

\textit{2. Strict convexity under extra conditions.} Take distinct $\xi_1,\xi_2\in\Omega$ and $\lambda\in(0,1)$. For every $\omega$ with $\rho(\xi_1,\omega)\neq\rho(\xi_2,\omega)$, the strict convexity of $f$ gives
\begin{equation*}
f\left(\lambda\rho(\xi_1,\omega)+(1-\lambda)\rho(\xi_2,\omega)\right)
< \lambda f(\rho(\xi_1,\omega))+(1-\lambda)f(\rho(\xi_2,\omega)).
\end{equation*}
Combining with the inequality from step 1, we obtain
\begin{equation*}
g_\omega(\lambda\xi_1+(1-\lambda)\xi_2)
< \lambda g_\omega(\xi_1)+(1-\lambda)g_\omega(\xi_2)
\end{equation*}
for those $\omega$. Also, for any two distinct points $\xi_1,\xi_2\in\Omega$ the set
\begin{equation*}
\{\omega\in\mathbb{S}^{n-1} \mid \rho(\xi_1,\omega)\neq\rho(\xi_2,\omega)\}
\end{equation*}
has positive Lebesgue measure (using Lemma \ref{lem:rho_continuous}).
Integrating over $\mathbb{S}^{n-1}$ yields
\begin{equation*}
\Phi(\lambda\xi_1+(1-\lambda)\xi_2)<\lambda\Phi(\xi_1)+(1-\lambda)\Phi(\xi_2),
\end{equation*}
proving strict convexity.

\textit{3. Uniqueness of the minimizer and of the critical point.} If $\Phi(\xi)\to\infty$ as $\xi \to \partial\Omega$, then $\Phi$ attains a minimum at some point $\xi_0\in\Omega$. Strict convexity implies that this minimizer is unique. Moreover, if $\Phi$ is differentiable at $\xi_0$, we have $\nabla\Phi(\xi_0)=0$. Any other critical point would also be a global minimizer by the first-order characterisation of convex functions, contradicting uniqueness. Hence $\Phi$ has exactly one critical point. 
\end{proof}

\appendix

\section{Further properties of $\rho$} \label{section-appendix}
This appendix gathers additional properties of the distance function $\rho(\xi,\omega)$. While not required for the proof of Theorem \ref{thm:main}, these results are of independent interest and, in fact, yield an alternative proof of the theorem for $C^2$ bounded convex domains. Specifically, the following lemmas reveal new connections between the differentiability of $\rho$ in $\xi$ and the regularity of the boundary. 

\begin{lemma} \label{lem:rho_lipschitz}
Let $\Omega\subset\mathbb{R}^n$ be a bounded convex domain. For each fixed $\omega\in\mathbb{S}^{n-1}$, the function $\xi\mapsto\rho(\xi,\omega)$ is locally Lipschitz continuous on $\Omega$. More precisely, for any compact set $K\subset\Omega$, set
\begin{equation*}
\delta:=\operatorname{dist}(K,\partial\Omega)>0.
\end{equation*}
Then for all $x,y\in K$ and every $\omega\in\mathbb{S}^{n-1}$,
\begin{equation*}
|\rho(x,\omega)-\rho(y,\omega)|\le L_K |x-y|,
\end{equation*}
where $L_K:=\frac{\operatorname{diam}(\Omega)}{\delta}$.
\end{lemma}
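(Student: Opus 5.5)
The plan is a direct geometric argument: I would trap the ray from $y$ inside the convex hull of a ball around $x$ and a point on the ray from $x$. Concavity (Lemma \ref{lem:concave}) is not needed, only the convexity of $\Omega$. Fix $\omega\in\mathbb{S}^{n-1}$, a compact $K\subset\Omega$, and $x,y\in K$, and set $h:=|x-y|$. Since $\rho(x,\omega),\rho(y,\omega)\in(0,\operatorname{diam}(\Omega)]$, the two values differ by at most $\operatorname{diam}(\Omega)$. So if $h\ge\delta$ the estimate is trivial:
\begin{equation*}
|\rho(x,\omega)-\rho(y,\omega)|\le \operatorname{diam}(\Omega)\le \frac{\operatorname{diam}(\Omega)}{\delta}\,h .
\end{equation*}
It remains to treat $0<h<\delta$. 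By symmetry in $x$ and $y$ it suffices to prove the one-sided bound $\rho(y,\omega)\ge \rho(x,\omega)-L_K h$.

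The key step is the inequality
\begin{equation*}
\rho(y,\omega)\ge \Bigl(1-\frac{h}{\delta}\Bigr)\rho(x,\omega).
\end{equation*}
Since $\operatorname{dist}(x,\partial\Omega)\ge\delta$ and $\Omega$ is convex, the open ball $B_\delta(x)$ lies in $\Omega$. Take any $t<\rho(x,\omega)$, so that $x+t\omega\in\Omega$, and any $0\le s<t(1-h/\delta)$. Put $\mu:=s/t\in[0,1)$ and $u:=(y-x)/(1-\mu)$. Then $|u|=h/(1-\mu)<\delta$, and a direct check gives
\begin{equation*}
y+s\omega=(1-\mu)(x+u)+\mu(x+t\omega).
\end{equation*}
This is a convex combination of the point $x+u\in B_\delta(x)\subset\Omega$ and the point $x+t\omega\in\Omega$, so $y+s\omega\in\Omega$ by convexity. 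Hence every $s\in[0,t(1-h/\delta))$ is admissible for $\rho(y,\omega)$, so $\rho(y,\omega)\ge t(1-h/\delta)$. Letting $t\uparrow\rho(x,\omega)$ gives the displayed inequality.

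From the key step,
\begin{equation*}
\rho(y,\omega)\ge \rho(x,\omega)-\rho(x,\omega)\frac{h}{\delta}\ge \rho(x,\omega)-\frac{\operatorname{diam}(\Omega)}{\delta}\,h,
\end{equation*}
where the last inequality uses $\rho(x,\omega)\le\operatorname{diam}(\Omega)$, true because $x$ and the boundary point $x+\rho(x,\omega)\omega$ both lie in $\overline\Omega$. Exchanging the roles of $x$ and $y$ gives the reverse inequality, which proves the claim with $L_K=\operatorname{diam}(\Omega)/\delta$, uniformly in $\omega$.

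The only delicate point is choosing the convex combination in the key step so that the ball-component stays strictly inside $B_\delta(x)$. This is exactly what forces the restriction $s<t(1-h/\delta)$, and it is why the case $h\ge\delta$ has to be handled separately by the trivial bound.
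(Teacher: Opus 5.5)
Your proof is correct. It is a close relative of the paper's argument but is packaged differently. The paper goes through Lemma~\ref{lem:concave}. It extends the segment from $x$ through $y$ to $z:=x+\mu(y-x)$ with $\mu=\delta/|y-x|+1$, writes $y=\frac1\mu z+(1-\frac1\mu)x$, and uses concavity of $\rho(\cdot,\omega)$ together with $\rho(z,\omega)\ge 0$. This gives $\rho(y,\omega)\ge \frac{\delta}{\delta+h}\rho(x,\omega)$, and it then bounds $\rho(x,\omega)\le\operatorname{diam}(\Omega)$. You argue directly from the convexity of $\Omega$, combining a point of $B_\delta(x)$ on the line through $x,y$ with $x+t\omega$. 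Unwinding the proof of Lemma~\ref{lem:concave} shows this is the same convex-combination mechanism. The only difference is where the ``safe'' auxiliary point sits: in your proof it is inside $B_\delta(x)$, whereas in the paper it is at distance $\delta$ beyond $y$.

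The paper's choice gives the factor $\frac{\delta}{\delta+h}$. This factor is valid for every $h$ and is slightly better than your $1-\frac h\delta$, so no case split is needed. It also presents the estimate as an instance of the standard fact that a nonnegative concave function is Lipschitz on compacts with constant $\sup/\operatorname{dist}$.

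Your version avoids two slips in the paper's write-up. First, the paper asserts $z\in\Omega$ ``since $\operatorname{dist}(x,\partial\Omega)\ge\delta$'', but $|z-x|=\delta+h>\delta$. The correct reason is $|z-y|=\delta$ with $y\in K$, and even then $z$ can lie on $\partial\Omega$, where Lemma~\ref{lem:concave} was not proved. For example, $\Omega=B_1(0)$, $K=\overline{B_{1/2}(0)}$, $x=0$, $y=\frac12 e_1$ gives $z=e_1$. So one must extend only by $\delta'<\delta$ and let $\delta'\uparrow\delta$. Second, the paper carries out the computation in the direction $\omega_0=(y-x)/|y-x|$ rather than the fixed $\omega$. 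This is harmless only because nothing in the argument uses the direction. Your proof keeps $\omega$ fixed throughout and keeps the auxiliary point strictly inside $B_\delta(x)$, so neither issue arises.
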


\begin{proof}
Fix $\omega\in\mathbb{S}^{n-1}$ and $x,y\in K$ with $x\neq y$. Let $\omega_0:=\frac{y-x}{|y-x|}$ be the unit direction from $x$ to $y$. Choose $\mu:=\frac{\delta}{|y-x|}+1>1$ and set
\begin{equation*}
z:=x+\mu(y-x)=x+\mu|y-x|\omega_0.
\end{equation*}
Then $|z-x|=\mu|y-x|=\delta+|y-x|$. Since $\operatorname{dist}(x,\partial\Omega)\ge\delta$, the point $z$ lies in $\Omega$. Moreover, 
$y=\frac{1}{\mu}z+(1-\frac{1}{\mu})x$.
By Lemma \ref{lem:concave}, we have
\begin{equation*}
\rho(y,\omega_0)\ge\frac{1}{\mu}\rho(z,\omega_0)+\left(1-\frac{1}{\mu}\right)\rho(x,\omega_0).
\end{equation*}
Since $\rho(z,\omega_0)\ge0$, we obtain
\begin{equation*}
\rho(x,\omega_0)-\rho(y,\omega_0)\le\frac{1}{\mu} \rho(x,\omega_0)\le\frac{1}{\mu}\sup_{\Omega}\rho(\cdot,\omega_0).
\end{equation*}
Because $\mu\ge\frac{\delta}{|y-x|}$, we have $\frac{1}{\mu}\le\frac{|y-x|}{\delta}$. Hence
\begin{equation*}
\rho(x,\omega_0)-\rho(y,\omega_0)\le\frac{|y-x|}{\delta}\sup_{\Omega}\rho(\cdot,\omega_0).
\end{equation*}
Interchanging the roles of $x$ and $y$ yields the same estimate for $\rho(y,\omega_0)-\rho(x,\omega_0)$. Therefore
\begin{equation*}
|\rho(x,\omega_0)-\rho(y,\omega_0)|\le\frac{\sup_{\Omega}\rho(\cdot,\omega_0)}{\delta} |x-y|.
\end{equation*}

Notice that $\rho$ is bounded on $\Omega\times\mathbb{S}^{n-1}$, because $\Omega$ is bounded and $\rho(\xi,\omega)\le\operatorname{diam}(\Omega)$ for all $\xi \in \Omega,\omega\in\mathbb{S}^{n-1}$. 
Then for all $x,y\in K$ and all $\omega\in\mathbb{S}^{n-1}$,
\begin{equation*}
|\rho(x,\omega)-\rho(y,\omega)|\le\frac{\operatorname{diam}(\Omega)}{\delta} |x-y|.
\end{equation*}
This completes the proof.
\end{proof}

Since $\rho(\cdot, \omega)$ is Lipschitz continuous on $K$ for each fixed $\omega$ by Lemma \ref{lem:rho_lipschitz}, it is differentiable almost everywhere in $K$ with respect to $\xi$ by Rademacher's theorem (see \cite[Theorem 3.2]{Evans2025}). The following lemma provides a uniform bound for this gradient, independent of $\omega$ and $\xi$, which implies that $\nabla_\xi\rho$ belongs to $L^\infty(K \times \mathbb{S}^{n-1})$. Let $\delta := \operatorname{dist}(K, \partial\Omega) > 0$ denote the distance from $K$ to the boundary of $\Omega$.

\begin{lemma} \label{lem:uniform_gradient_bound}
Let $\Omega \subset \mathbb{R}^n$ be a bounded convex open set. For any compact subset $K \subset \Omega$, there exists a constant $L_K > 0$, independent of $\omega$, such that for all $\omega \in \mathbb{S}^{n-1}$ and almost every $\xi \in K$, the gradient satisfies
\begin{equation*}
|\nabla_\xi \rho(\xi, \omega)| \leq L_K,
\end{equation*}
where $L_K:=\frac{\operatorname{diam}(\Omega)}{\delta}$.
Consequently, $\nabla_\xi \rho \in L^\infty(K \times \mathbb{S}^{n-1})$.
\end{lemma}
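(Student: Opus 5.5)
The plan is to derive the gradient bound directly from the uniform Lipschitz estimate of Lemma \ref{lem:rho_lipschitz}. Fix $\omega\in\mathbb{S}^{n-1}$. Rademacher's theorem gives a full-measure subset of $K$ on which $\rho(\cdot,\omega)$ is differentiable. The bound is local, so I will take points $\xi$ in the interior of $K$. The boundary of $K$ can be handled by enlarging $K$ slightly: replace $K$ by $K_\varepsilon=\{x:\operatorname{dist}(x,K)\le\varepsilon\}\subset\Omega$ and use $\operatorname{dist}(K_\varepsilon,\partial\Omega)\ge\delta-\varepsilon$. Alternatively, note that the proof of Lemma \ref{lem:rho_lipschitz} only uses $\operatorname{dist}(x,\partial\Omega)\ge\delta$ at the base point. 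Either way the constant becomes $\operatorname{diam}(\Omega)/(\delta-\varepsilon)$, and letting $\varepsilon\to0$ gives the stated bound.

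At a differentiability point $\xi$, take any unit vector $e$ and small $h>0$. By Lemma \ref{lem:rho_lipschitz},
\begin{equation*}
\left|\frac{\rho(\xi+he,\omega)-\rho(\xi,\omega)}{h}\right|\le L_K .
\end{equation*}
Letting $h\to0^+$ gives $|\nabla_\xi\rho(\xi,\omega)\cdot e|\le L_K$. Choosing $e=\nabla_\xi\rho/|\nabla_\xi\rho|$ (when the gradient is nonzero) yields $|\nabla_\xi\rho(\xi,\omega)|\le L_K$. Since $L_K=\operatorname{diam}(\Omega)/\delta$ depends only on $K$ and $\Omega$, the bound is uniform in $\omega$.

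For the conclusion $\nabla_\xi\rho\in L^\infty(K\times\mathbb{S}^{n-1})$, I first need joint measurability of the set of pairs $(\xi,\omega)$ where the gradient exists, and of the gradient itself. I would obtain this by writing each partial derivative as a limit of difference quotients along a sequence $h_k\to0$:
\begin{equation*}
\partial_{\xi_i}\rho(\xi,\omega)=\lim_{k\to\infty}\frac{\rho(\xi+h_ke_i,\omega)-\rho(\xi,\omega)}{h_k}.
\end{equation*}
These quotients are continuous in $(\xi,\omega)$ by Lemma \ref{lem:rho_continuous}. Hence the set where the limits exist is Borel, and the limit function is Borel. Full differentiability can be expressed through countably many rational directions and increments, using the Lipschitz property to pass from rational to arbitrary directions. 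So the differentiability set is Borel as well.

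By Fubini's theorem, each $\omega$-slice of the complement of this set has measure zero, so the complement has measure zero in $K\times\mathbb{S}^{n-1}$. The gradient is therefore defined a.e. on $K\times\mathbb{S}^{n-1}$, and it is bounded there by $L_K$. I expect this measurability step to be the main technical point; the gradient bound itself is immediate from Lemma \ref{lem:rho_lipschitz}.
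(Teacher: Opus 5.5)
Your proposal is correct and follows the same route the paper intends: the paper gives no separate proof, only the preceding remark that the bound comes from the $\omega$-uniform Lipschitz constant $\operatorname{diam}(\Omega)/\delta$ of Lemma~\ref{lem:rho_lipschitz} together with Rademacher's theorem, which is exactly your difference-quotient argument at differentiability points. Your extra care fills in details the paper leaves implicit: you enlarge $K$ to $K_\varepsilon$ so that $\xi+he$ stays where the Lipschitz estimate applies, and you prove joint Borel measurability with a Fubini argument so that $\nabla_\xi\rho\in L^\infty(K\times\mathbb{S}^{n-1})$ is well-defined.
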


The following lemma shows that on any compact set $K\subset\Omega$, the quantity $\nu(y)\cdot\omega$ admits a uniform positive lower bound independent of $\omega$. This enables us to apply the implicit function theorem in Lemma \ref{lem:rho_C2}.

\begin{lemma}\label{lem:nu_omega_bound}
Let $\Omega\subset\mathbb{R}^n$ be a bounded convex domain with $C^1$ boundary. 
Then for every $\xi\in K$ and every $\omega\in\mathbb{S}^{n-1}$,
\begin{equation*}
\nu(y)\cdot\omega >0,
\end{equation*}
where $y=\xi+\rho(\xi,\omega)\omega\in\partial\Omega$, and $\nu(y)$ denotes the unit outward normal at $y$.

Moreover, for any compact set $K\subset\Omega$, there exists a constant $c_K>0$ such that
\begin{equation*}
\nu(y)\cdot\omega \ge c_K,
\end{equation*}
for every $\xi\in K$ and every $\omega\in\mathbb{S}^{n-1}$.
\end{lemma}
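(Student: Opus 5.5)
The first assertion is a pointwise statement: for fixed $\xi\in\Omega$ and $\omega\in\mathbb{S}^{n-1}$, set $y=\xi+\rho(\xi,\omega)\omega$. Since $\Omega$ is bounded, $\rho(\xi,\omega)$ is finite, and since $\Omega$ is open, $y\notin\Omega$; moreover $y\in\partial\Omega$ as a limit of the points $\xi+s\omega\in\Omega$ with $s\uparrow\rho(\xi,\omega)$. With $C^1$ boundary, convexity gives the supporting hyperplane inequality $\nu(y)\cdot(x-y)\le 0$ for all $x\in\overline\Omega$, and in fact $\nu(y)\cdot(x-y)<0$ for interior points $x\in\Omega$. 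To see the strict version, note that if $x\in\Omega$ satisfied $\nu(y)\cdot(x-y)=0$, then a small ball around $x$ lies in $\Omega$ and contains points with $\nu(y)\cdot(x'-y)>0$, which is a contradiction. The ball centered at $\xi$ argument can be made quantitative. Let $r:=\operatorname{dist}(\xi,\partial\Omega)$. Since $B_r(\xi)\subset\Omega$, the point $\xi+r'\nu(y)$ lies in $\overline\Omega$ for every $r'<r$, hence for $r'=r$ by closedness. The supporting inequality then gives $\nu(y)\cdot(\xi-y)+r\le 0$, i.e.
\begin{equation*}
\nu(y)\cdot(y-\xi)\ge r=\operatorname{dist}(\xi,\partial\Omega).
\end{equation*}
Because $y-\xi=\rho(\xi,\omega)\omega$, this reads $\rho(\xi,\omega)\,\nu(y)\cdot\omega\ge \operatorname{dist}(\xi,\partial\Omega)>0$, which proves positivity.

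For the uniform bound I would use the same inequality together with Lemma \ref{lem:rho_lipschitz}'s observation that $\rho(\xi,\omega)\le\operatorname{diam}(\Omega)$. This gives
\begin{equation*}
\nu(y)\cdot\omega\ \ge\ \frac{\operatorname{dist}(\xi,\partial\Omega)}{\rho(\xi,\omega)}\ \ge\ \frac{\delta}{\operatorname{diam}(\Omega)}=:c_K,
\end{equation*}
where $\delta=\operatorname{dist}(K,\partial\Omega)>0$ by compactness of $K$ and openness of $\Omega$. This constant is independent of $\omega$ and $\xi\in K$, and it is the reciprocal of the Lipschitz constant $L_K$, which is consistent with the gradient formula $\nabla_\xi\rho=-\nu/(\nu\cdot\omega)$ one expects later.

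The only step requiring care is the supporting-hyperplane inequality with the outward normal at a $C^1$ boundary point of a convex set. Here I would argue that the tangent hyperplane at $y$ must coincide with any supporting hyperplane, since a supporting hyperplane not equal to the tangent plane would cut through the $C^1$ graph locally. Hence $\{x:\nu(y)\cdot(x-y)\le0\}\supset\Omega$, with the sign fixed by $\nu$ being outward. This is standard (e.g.\ \cite[Section 1.3]{Schneider2014}), so I expect no real obstacle; the rest is the elementary inscribed-ball computation above.
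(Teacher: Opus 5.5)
Your proof is correct. Your positivity step is the same as the paper's: the supporting hyperplane at $y$ has outer normal $\nu(y)$ and cannot contain the interior point $\xi$. The routes differ for the uniform bound.

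\textbf{The paper's route.} It obtains $c_K$ by a soft compactness argument. The map $(\xi,\omega)\mapsto\nu(\xi+\rho(\xi,\omega)\omega)\cdot\omega$ is continuous on $\Omega\times\mathbb{S}^{n-1}$, using the joint continuity of $\rho$ (Lemma \ref{lem:rho_continuous}) and the continuity of $\nu$ coming from the $C^1$ hypothesis. It therefore attains a positive minimum on the compact set $K\times\mathbb{S}^{n-1}$.

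\textbf{Your route.} You test the supporting inequality against the point $\xi+r\nu(y)\in\overline\Omega$ of the inscribed ball, with $r=\operatorname{dist}(\xi,\partial\Omega)$. This gives the quantitative estimate $\rho(\xi,\omega)\,\nu(y)\cdot\omega\ge\operatorname{dist}(\xi,\partial\Omega)$, and hence the explicit constant $c_K=\delta/\operatorname{diam}(\Omega)$.

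\textbf{What each buys.} Your argument gains three things.
\begin{enumerate}
\item The constant is explicit and equals $1/L_K$, so it fits Lemma \ref{lem:rho_lipschitz} and the identity $|\nabla_\xi\rho|=1/(\nu\cdot\omega)$ of Lemma \ref{lem:rho_C2}.
\item It needs neither the joint continuity of $\rho$ nor the continuity of $\nu$.
\item It holds verbatim for any unit outer normal of any supporting hyperplane at $y$. The $C^1$ assumption is used only to identify that normal with $\nu(y)$, which your local-graph remark justifies.
\end{enumerate}
The paper's argument is shorter given the lemmas already established, but it gives no information about the size of $c_K$.
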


\begin{proof}
Fix $(\xi,\omega)\in\Omega\times\mathbb{S}^{n-1}$ and let $y$ be the corresponding boundary point. By convexity of $\Omega$, we have $\overline{\Omega}$ is convex by \cite[Theorem 1.1.10]{Schneider2014}. 
It follows by Supporting Hyperplane Theorem (see \cite[Theorem 1.3.2]{Schneider2014}), the outward normal $\nu(y)$ satisfies
\begin{equation*}
\nu(y)\cdot(z-y)\le 0\quad\text{for all }z\in\Omega,
\end{equation*}
with equality only if $z$ lies on the supporting hyperplane. Since $\xi$ is an interior point of $\Omega$, it does not belong to that hyperplane. Hence
\begin{equation*}
\nu(y)\cdot(\xi-y)<0.
\end{equation*}
But $\xi-y = -\rho(\xi,\omega)\omega$ and $\rho(\xi,\omega)>0$, therefore
\begin{equation*}
-\rho(\xi,\omega) \nu(y)\cdot\omega<0\quad\Longrightarrow\quad \nu(y)\cdot\omega>0.
\end{equation*}
Thus the function $f(\xi,\omega):=\nu(y)\cdot\omega$ is strictly positive on $\Omega\times\mathbb{S}^{n-1}$. 
The continuity of $f$ follows from the continuity of $\rho$ (Lemma \ref{lem:rho_continuous}), and the $C^1$ regularity of $\partial\Omega$, which guarantees that $\nu:\partial\Omega\to\mathbb{S}^{n-1}$ is continuous. 
Restricting $f$ to the compact set $K\times\mathbb{S}^{n-1}$, it attains a minimum $c_K$. Because $f$ is everywhere positive on this set, we must have $c_K>0$. An intuitive illustration is provided in Figure \ref{fig:nu_omega_correct}. This is the desired uniform lower bound.
\end{proof}

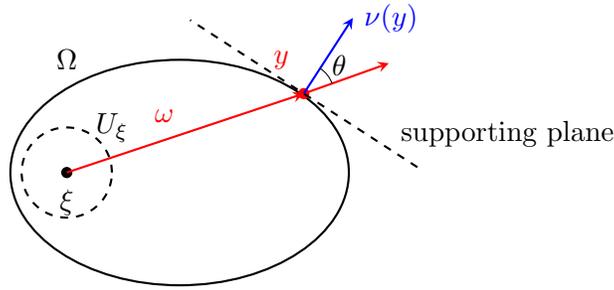
\begin{figure}[htbp]
\centering

\begin{tikzpicture}[scale=1.5, >=stealth, font=\small]
 
 \draw[thick] (0,0) ellipse (1.5 and 1.0);
 \node at (-1.0,1.0) {$\Omega$};
 \node at (0.9,1.0) {\textcolor{red}{$y$}};

 \coordinate (xi) at (-1.0,0.0);
 \node[circle, fill=black, inner sep=1.5pt, label=below:$\xi$] at (xi) {};

 \draw[dashed, thick] (xi) circle (0.4);
 \node at ($(xi)+(0.4,0.4)$) {$U_\xi$};

 \coordinate (y) at (1.1,0.7);
 \node[circle, fill=red, inner sep=1.5pt, label=above right:] at (y) {};

 \draw[->, thick, red] (xi) -- (y) node[midway, above left] {$\omega$};

 \coordinate (w_vec) at ($(y)-(xi)$); 

 \pgfmathsetmacro{\wx}{2.8} 
 \pgfmathsetmacro{\wy}{1.0}
 \pgfmathsetmacro{\wlen}{sqrt(\wx*\wx + \wy*\wy)}
 \pgfmathsetmacro{\ux}{\wx/\wlen}
 \pgfmathsetmacro{\uy}{\wy/\wlen}
 \pgfmathsetmacro{\extlen}{0.8}
 \coordinate (w_ext) at ($(y)+\extlen*(\ux,\uy)$);
 \draw[->, thick, red] (y) -- (w_ext);

 \pgfmathsetmacro{\nx}{1.8/6.25} 
 \pgfmathsetmacro{\ny}{1.0/2.25} 
 \pgfmathsetmacro{\nlen}{sqrt(\nx*\nx + \ny*\ny)}
 \pgfmathsetmacro{\nux}{\nx/\nlen}
 \pgfmathsetmacro{\nuy}{\ny/\nlen}

 \pgfmathsetmacro{\nulen}{0.8}
 \coordinate (nu) at (\nulen*\nux, \nulen*\nuy);
 \draw[->, thick, blue] (y) -- ++(nu) node[right] {$\nu(y)$};

 \pgfmathsetmacro{\tx}{-\nuy}
 \pgfmathsetmacro{\ty}{\nux}
 \coordinate (tan) at (\tx,\ty);

 \pgfmathsetmacro{\tanlen}{1.2}
 \draw[dashed, thick] ($(y)-\tanlen*(\tx,\ty)$) -- ($(y)+\tanlen*(\tx,\ty)$);
 \node at ($(y)+(-\tanlen*\tx + 0.8, -\tanlen*\ty + 0.3)$) {supporting plane};

 \coordinate (w_dir) at ($(y)+0.2*(\ux,\uy)$); 
 \coordinate (nu_dir) at ($(y)+0.2*(\nux,\nuy)$); 
 \pic[draw, "$\theta$", angle eccentricity=1.5, angle radius=0.4cm] {angle = w_dir--y--nu_dir};
\end{tikzpicture}
\caption{Geometry of $\nu(y)\cdot\omega>0$. The ray from $\xi$ in direction $\omega$ meets the boundary $\partial\Omega$ at $y$.}
\label{fig:nu_omega_correct}
\end{figure}

With this bound for $\nu(y)\cdot\omega$ established in Lemma \ref{lem:nu_omega_bound}, we can now prove that for each fixed direction $\omega$, the distance function $\xi\mapsto\rho(\xi,\omega)$ is actually $C^2$ on the whole domain $\Omega$, and its derivatives are uniformly bounded on compact sets. This goes far beyond the almost everywhere differentiability (Lemma \ref{lem:uniform_gradient_bound}) guaranteed by convexity and is essential for the subsequent analysis of the Hessian of $F$.

\begin{lemma} \label{lem:rho_C2}
Let $\Omega\subset\mathbb{R}^n$ be a bounded convex domain with $C^2$ boundary. For any fixed direction $\omega\in\mathbb{S}^{n-1}$, the function $\xi\mapsto\rho(\xi,\omega)$ is of class $C^2$ on $\Omega$. 
Moreover, for every $\xi\in\Omega$ and $\omega\in\mathbb{S}^{n-1}$, the first and second derivatives of $\rho$ are given by 
\begin{equation*}
\nabla_\xi\rho(\xi,\omega)=-\frac{\nu(y)}{\nu(y)\cdot\omega},
\end{equation*}
and
\begin{equation*}
\nabla_\xi^2\rho = -\frac{1}{\nu\cdot\omega}\nabla_\xi\nu + \frac{1}{(\nu\cdot\omega)^2} \nu\otimes\nabla_\xi(\nu\cdot\omega),
\end{equation*}
where $y=\xi+\rho(\xi,\omega)\omega$ and $\nu=\nu(y)$ denotes the outward unit normal at $y$.

Furthermore, for any compact set $K\subset\Omega$, there exists a constant $C_K>0$ such that
\begin{equation*}
|\nabla_\xi\rho(\xi,\omega)| \le C_K\quad\text{and}\quad |\nabla_\xi^2\rho(\xi,\omega)| \le C_K,
\end{equation*}
for any $\xi\in K$ and every $\omega\in\mathbb{S}^{n-1}$.
\end{lemma}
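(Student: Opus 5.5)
The approach is the implicit function theorem applied to the defining relation of $\rho$. Since $\partial\Omega$ is $C^2$, near any boundary point $y_0$ there is a $C^2$ local defining function $\phi$ with $\Omega=\{\phi<0\}$, $\partial\Omega=\{\phi=0\}$ locally, and $\nabla\phi\neq0$ on $\partial\Omega$. Normalizing, we may take $\phi$ to be the signed distance to $\partial\Omega$, which is $C^2$ in a tubular neighborhood of a $C^2$ boundary. Then $\nabla\phi=\nu$ on $\partial\Omega$, and $\nu$ extends as a $C^1$ field.

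Fix $(\xi_0,\omega)$, set $t_0=\rho(\xi_0,\omega)$ and $y_0=\xi_0+t_0\omega$, and consider
\begin{equation*}
G(\xi,t)=\phi(\xi+t\omega).
\end{equation*}
We have $G(\xi_0,t_0)=0$ and $\partial_tG(\xi_0,t_0)=\nu(y_0)\cdot\omega>0$ by Lemma \ref{lem:nu_omega_bound}. The implicit function theorem therefore gives a $C^2$ function $t(\xi)$ near $\xi_0$ with $G(\xi,t(\xi))=0$.

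Next I identify $t(\xi)$ with $\rho(\xi,\omega)$. By convexity the ray from $\xi$ meets $\partial\Omega$ exactly once. By the joint continuity in Lemma \ref{lem:rho_continuous}, $\rho(\xi,\omega)$ stays close to $t_0$, so $\xi+\rho(\xi,\omega)\omega$ lies in the neighborhood where the zero set of $G(\xi,\cdot)$ is the single point $t(\xi)$. Hence $\rho(\cdot,\omega)=t(\cdot)$ locally, and $\rho(\cdot,\omega)\in C^2(\Omega)$.

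Differentiating $\phi(\xi+\rho\omega)=0$ gives $\nabla\phi(y)+(\nabla\phi(y)\cdot\omega)\nabla_\xi\rho=0$, that is,
\begin{equation*}
\nabla_\xi\rho=-\frac{\nu(y)}{\nu(y)\cdot\omega}.
\end{equation*}
For the second derivative I differentiate this quotient in $\xi$, treating $\nu=\nu(y(\xi))$ with $y(\xi)=\xi+\rho(\xi,\omega)\omega$. The quotient rule yields the stated formula
\begin{equation*}
\nabla_\xi^2\rho=-\frac{1}{\nu\cdot\omega}\nabla_\xi\nu+\frac{1}{(\nu\cdot\omega)^2}\,\nu\otimes\nabla_\xi(\nu\cdot\omega).
\end{equation*}
The chain rule gives
\begin{equation*}
\nabla_\xi\nu=D\nu(y)\,(I+\omega\otimes\nabla_\xi\rho),
\end{equation*}
which is continuous since $D\nu$ is the (continuous) Hessian of the signed distance on $\partial\Omega$. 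The only care needed is index ordering and transposes. I will fix the convention $(\nabla_\xi\nu)_{ij}=\partial_{\xi_j}\nu_i$ and check that the two sides agree as matrices, noting that symmetry of the Hessian is automatic from $C^2$.

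For the uniform bounds on $K\times\mathbb{S}^{n-1}$, Lemma \ref{lem:nu_omega_bound} gives $\nu\cdot\omega\ge c_K$, so $|\nabla_\xi\rho|\le 1/c_K$. Since $\partial\Omega$ is compact and $C^2$, $\|D\nu\|_{L^\infty(\partial\Omega)}\le\kappa$, where $\kappa$ bounds the principal curvatures. This gives
\begin{equation*}
|\nabla_\xi\nu|\le\kappa\,(1+1/c_K),\qquad |\nabla_\xi(\nu\cdot\omega)|\le|\nabla_\xi\nu|,
\end{equation*}
hence $|\nabla^2_\xi\rho|\le \kappa(1+1/c_K)(1/c_K+1/c_K^2)$. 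Taking $C_K$ to be the maximum of these two bounds finishes the proof.

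The main obstacle is the identification step, namely that the implicit solution coincides with $\rho$ rather than with another root. I handle it through convexity (uniqueness of the boundary crossing) together with continuity of $\rho$. The compactness argument for $c_K$ in Lemma \ref{lem:nu_omega_bound} then makes every constant uniform in $\omega$.
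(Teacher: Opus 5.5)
Your proposal is correct and takes essentially the same route as the paper. Both arguments apply the implicit function theorem to $\gamma(\xi+t\omega)=0$ with $\partial_t=\nu(y_0)\cdot\omega>0$ from Lemma \ref{lem:nu_omega_bound}, identify the implicit solution with $\rho$, differentiate the defining identity, and get uniform bounds from $c_K$. Your version is somewhat more careful at three points the paper passes over quickly: the signed distance makes $\nabla\phi=\nu$ hold along the whole boundary patch rather than only at $y_0$ (the paper appeals to Remark \ref{rem:gamma_independence}), the continuity of $\rho$ places it in the uniqueness window of the implicit function theorem, and the curvature constant $\kappa$ gives an explicit Hessian bound where the paper uses a compactness argument.
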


\begin{proof}
Fix $(\xi_0,\omega)\in\Omega\times\mathbb{S}^{n-1}$ and let $t_0=\rho(\xi_0,\omega)$ and $y_0=\xi_0+t_0\omega\in\partial\Omega$. Since $\partial\Omega$ is $C^2$, there exists a neighborhood $U\subset\mathbb{R}^n$ of $y_0$ and a $C^2$ function $\gamma:U\to\mathbb{R}$ such that
\begin{equation*}
U\cap\partial\Omega=\{z\in U \mid \gamma(z)=0\},\quad \nabla\gamma(y_0)=\nu(y_0)\neq0,
\end{equation*}
where $\nu(y_0)$ is the unit outward normal at $y_0$. By continuity, we can find an open neighborhood $V\subset\Omega$ of $\xi_0$ and $\delta>0$ such that for every $\xi\in V$ and every $t\in(t_0-\delta,t_0+\delta)$, we have $\xi+t\omega\in U$.  
Define the $C^2$ map
\begin{equation*}
\Phi(\xi,t):=\gamma(\xi+t\omega),\quad (\xi,t)\in V\times(t_0-\delta,t_0+\delta).
\end{equation*}
We have $\Phi(\xi_0,t_0)=0$ and
\begin{equation*}
\frac{\partial\Phi}{\partial t}(\xi_0,t_0)=\nabla\gamma(y_0)\cdot\omega=\nu(y_0)\cdot\omega \neq 0,
\end{equation*}
by Lemma \ref{lem:nu_omega_bound}. The implicit function theorem (see \cite[Theorem 7.13.1]{Ciarlet2013}) provides an open neighborhood $W\subset\Omega$ of $\xi_0$ and a unique $C^2$ function $\tilde{\rho}:W\to(0,\infty)$ such that 
\begin{equation*}
\tilde{\rho}(\xi_0)=t_0 \quad \text{and} \quad \Phi(\xi,\tilde{\rho}(\xi))=0 \ \text{for all } \xi\in W.
\end{equation*}
By definition of $\rho(\xi,\omega)$ and the uniqueness of $\tilde{\rho}$, we must have 
\begin{equation*}
\tilde{\rho}(\xi)=\rho(\xi,\omega) \quad \text{on } W.
\end{equation*} 
Thus $\rho(\cdot,\omega)$ is $C^2$ in $W$. Since $\xi_0\in\Omega$ was arbitrary, $\rho(\cdot,\omega)$ is $C^2$ on the whole $\Omega$.

Now differentiate the identity $\gamma(\xi+\rho(\xi,\omega)\omega)=0$ with respect to $\xi$. Using the chain rule, we obtain
\begin{equation*}
\nabla\gamma(y)\cdot\left(I+\omega\otimes\nabla_\xi\rho(\xi,\omega)\right)=0,
\end{equation*}
where $y=\xi+\rho(\xi,\omega)\omega$. Since $\nabla\gamma(y)=\nu(y)$ and $\nu(y)\cdot(\omega\otimes\nabla_\xi\rho)=(\nu(y)\cdot\omega)\nabla_\xi\rho$, this yields
\begin{equation*}
\nu(y)+(\nu(y)\cdot\omega)\nabla_\xi\rho(\xi,\omega)=0.
\end{equation*}
Because $\nu(y)\cdot\omega>0$ by Lemma \ref{lem:nu_omega_bound}, we can solve for the gradient,
\begin{equation} \label{eq:gradient}
\nabla_\xi\rho(\xi,\omega)=-\frac{\nu(y)}{\nu(y)\cdot\omega}.
\end{equation}

Let $K\subset\Omega$ be any compact set. Because $\rho$ is continuous (Lemma \ref{lem:rho_continuous}) and $K\times\mathbb{S}^{n-1}$ is compact, there exist two constants $0<m_K<M_K$ such that
\begin{equation*}
m_K\le\rho(\xi,\omega)\le M_K\quad\forall(\xi,\omega)\in K\times\mathbb{S}^{n-1}.
\end{equation*}
Moreover, Lemma \ref{lem:nu_omega_bound} gives a constant $c_K>0$ with
\begin{equation*}
\nu(y)\cdot\omega\ge c_K\quad\forall(\xi,\omega)\in K\times\mathbb{S}^{n-1},
\end{equation*}
where $y=\xi+\rho(\xi,\omega)\omega$. From the gradient formula \eqref{eq:gradient}, we immediately obtain
\begin{equation*}
|\nabla_\xi\rho(\xi,\omega)|\le\frac{1}{c_K}\quad\forall(\xi,\omega)\in K\times\mathbb{S}^{n-1}.
\end{equation*}

To bound the second derivatives, we differentiate both sides of \eqref{eq:gradient} with respect to $\xi$ to obtain 
\begin{equation}
\nabla_\xi^2\rho = -\frac{1}{\nu\cdot\omega} \nabla_\xi\nu + \frac{1}{(\nu\cdot\omega)^2} \nu\otimes\nabla_\xi(\nu\cdot\omega). \label{eq:hessian_final}
\end{equation}
Since $\nu$ depends on $\xi$ through $y$, the chain rule gives
\begin{equation*}
\nabla_\xi\nu = (\nabla_y\nu) \nabla_\xi y,
\end{equation*}
where $\nabla_y\nu$ is the derivative of $\nu$ with respect to $y$, and $\nabla_\xi y = I + \omega\otimes\nabla_\xi\rho$ is the derivative of $y = \xi + \rho\omega$ with respect to $\xi$. Similarly, we have
\begin{equation*}
\nabla_\xi(\nu\cdot\omega) = \left(\nabla_\xi\nu\right)^\top \omega.
\end{equation*}

All quantities on the right‑hand side of \eqref{eq:hessian_final} are continuous on compact subsets of $\Omega\times\mathbb{S}^{n-1}$ because $\partial\Omega$ is $C^2$ and $\rho$ is continuous. Moreover, Lemma \ref{lem:nu_omega_bound} provides a uniform positive lower bound $c_K>0$ for $\nu\cdot\omega$ on any compact set $K\subset\Omega$. Consequently, we conclude that there exists a constant $D_K>0$ such that
\begin{equation*}
|\nabla_\xi^2\rho(\xi,\omega)|\le D_K \quad \forall(\xi,\omega)\in K\times\mathbb{S}^{n-1}.
\end{equation*}
The proof is completed by taking $C_K = \max\{1/c_K, D_K\}$.
\end{proof}

\begin{remark}\label{rem:gamma_independence}
In the derivation of the gradient formula \eqref{eq:gradient}, we introduced a local defining function $\gamma$ for the boundary $\partial\Omega$. The explicit form of $\gamma$ is irrelevant to the final result. Indeed, if one chooses a different defining function $\tilde{\gamma}$ with $\nabla\tilde{\gamma}(y)$ pointing either outward or inward, the same algebraic manipulation leads again to \eqref{eq:gradient}. 
\end{remark}

\begin{remark}\label{rem:regularity_transfer}
The regularity of $\rho(\cdot,\omega)$ is inherited directly from that of $\partial\Omega$ via the implicit function theorem. More precisely, if $\partial\Omega$ is of class $C^k$ with $k\ge 1$, then the function $\xi\mapsto\rho(\xi,\omega)$ is of class $C^k$ in a neighbourhood of $\xi$. This follows from the standard implicit function theorem applied to the defining equation $\gamma(\xi+\rho\omega)=0$, where $\gamma$ is a local $C^k$ defining function for $\partial\Omega$. 
\end{remark}

Combining these facts, we obtain the following result, which answers the open problem raised in \cite{ClappPistoiaSaldana2026} for $C^2$ bounded convex domain.

\begin{theorem} 
Let $\Omega\subset\mathbb{R}^n$ be a bounded convex domain with $C^2$ boundary. Then the function
\begin{equation*}
\psi_\Omega(\xi)=\int_{\mathbb{R}^n\setminus\Omega}\frac{\mathrm{d}x}{|x-\xi|^{2n}},\quad \xi\in\Omega,
\end{equation*}
has exactly one critical point in $\Omega$. This critical point is the global minimizer of $\psi_\Omega$.
\end{theorem}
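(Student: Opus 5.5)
This statement is the $C^2$ case of Theorem \ref{thm:main}. The plan is to give the alternative, differential proof that the appendix advertises: show that $F=n\psi_\Omega=\int_{\mathbb{S}^{n-1}}\rho(\xi,\omega)^{-n}\,\mathrm{d}\omega$ is $C^2$ on $\Omega$ with a positive definite Hessian, and then conclude as in the proof of Theorem \ref{thm:main}.

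\emph{Step 1: regularity.} Fix a compact $K\subset\Omega$. By Lemma \ref{lem:rho_C2}, $\rho(\cdot,\omega)$ is $C^2$ for each $\omega$, with $|\nabla_\xi\rho|,|\nabla^2_\xi\rho|\le C_K$ uniformly on $K\times\mathbb{S}^{n-1}$. By Lemma \ref{lem:rho_continuous} and compactness, $\rho\ge m_K>0$ there. Hence the first and second $\xi$-derivatives of $\rho^{-n}$ are bounded uniformly in $\omega$. Moreover, the explicit formulas in Lemma \ref{lem:rho_C2} are continuous in $(\xi,\omega)$, so dominated convergence lets us differentiate under the integral twice. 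This gives $F\in C^2(\Omega)$ and
\begin{equation*}
\nabla^2F(\xi)=\int_{\mathbb{S}^{n-1}}\Big(n(n+1)\rho^{-n-2}\,\nabla_\xi\rho\otimes\nabla_\xi\rho-n\rho^{-n-1}\nabla^2_\xi\rho\Big)\mathrm{d}\omega .
\end{equation*}
This makes the formal remark in Section \ref{section-4} rigorous.

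\emph{Step 2: positive definiteness.} By Lemma \ref{lem:concave}, each $\rho(\cdot,\omega)$ is concave and $C^2$, so $\nabla_\xi^2\rho\le0$ and the second term is positive semidefinite. For strictness, fix $v\neq0$. From Lemma \ref{lem:rho_C2},
\begin{equation*}
v\cdot\nabla_\xi\rho=-\frac{\nu(y)\cdot v}{\nu(y)\cdot\omega},\qquad y=\xi+\rho(\xi,\omega)\omega .
\end{equation*}
Take $\omega=v/|v|$. Then $\nu\cdot v=|v|\,\nu\cdot\omega>0$ by Lemma \ref{lem:nu_omega_bound}, so $v\cdot\nabla_\xi\rho\ne0$; this is the differential form of Lemma \ref{lem:rho_linear}, namely $v\cdot\nabla_\xi\rho(\xi,v/|v|)=-|v|$. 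By continuity in $\omega$, the quantity $(v\cdot\nabla_\xi\rho)^2$ stays positive on a neighbourhood of $v/|v|$, which has positive measure. Therefore $v^\top\nabla^2F(\xi)v>0$, and $F$ is strictly convex on the convex set $\Omega$.

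\emph{Step 3: conclusion.} The minimum is attained in $\Omega$ because $\psi_\Omega\to+\infty$ at $\partial\Omega$: near a boundary point, a cone of directions has small $\rho$. A minimizer is a critical point, and any critical point of a differentiable convex function is a global minimizer. Strict convexity then excludes two distinct minimizers, exactly as in the proof of Theorem \ref{thm:main}.

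The main obstacle is Step 1: justifying differentiation under the integral needs uniform-in-$\omega$ bounds together with joint continuity. Both come from the uniform lower bound $\nu(y)\cdot\omega\ge c_K$ in Lemma \ref{lem:nu_omega_bound} and the $C^2$ regularity of $\partial\Omega$. If joint continuity of the Hessian is delicate, a fallback is to skip second derivatives altogether and invoke Lemma \ref{lem:strictconvex} together with Lemma \ref{lem:psi_differentiable}.
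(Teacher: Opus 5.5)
Your proposal is correct and follows the paper's proof step for step. It uses the $C^2$ regularity and uniform bounds of Lemma \ref{lem:rho_C2}, differentiates under the integral by dominated convergence, gets semidefiniteness from the concavity of $\rho(\cdot,\omega)$, obtains strictness from the rank-one term, and concludes by the convex-minimization argument. The only difference is the strictness step: you evaluate at the single direction $\omega=v/|v|$, where $v\cdot\nabla_\xi\rho=-|v|$ (the infinitesimal form of Lemma \ref{lem:rho_linear}), whereas the paper argues by contradiction using surjectivity of $\omega\mapsto\nu(y(\omega))$. Your version is slightly more direct and does not need the supporting-hyperplane argument.
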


\begin{proof}
We work with $F(\xi)=n\psi_\Omega(\xi)=\int_{\mathbb{S}^{n-1}}\rho(\xi,\omega)^{-n} \mathrm{d}\omega$. By Lemma \ref{lem:rho_C2}, for each fixed $\omega$ the map $\xi\mapsto\rho(\xi,\omega)$ is $C^2$ on $\Omega$, and its first and second derivatives are uniformly bounded on any compact set $K\subset\Omega$ independently of $\omega$. The dominated convergence theorem yields
\begin{equation} \label{eq:first}
\nabla F(\xi)= -n\int_{\mathbb{S}^{n-1}}\rho(\xi,\omega)^{-n-1}\nabla_\xi\rho(\xi,\omega) \mathrm{d}\omega,\quad \xi\in \Omega, 
\end{equation}
and 
\begin{equation} \label{eq:second}
\nabla^2 F(\xi)=n(n+1)\int_{\mathbb{S}^{n-1}}\rho^{-n-2} \nabla_\xi\rho\otimes\nabla_\xi\rho \mathrm{d}\omega
+n\int_{\mathbb{S}^{n-1}}\rho^{-n-1} (-\nabla_\xi^2\rho) \mathrm{d}\omega. 
\end{equation} 
The first integrand in \eqref{eq:second} is a positive semidefinite matrix multiplied by a positive coefficient. The second integrand in \eqref{eq:second} is also positive semidefinite because $\rho$ is concave (Lemma \ref{lem:concave}). Hence $\nabla^2 F(\xi)$ is positive semidefinite for every $\xi\in\Omega$.

To prove strict convexity, we fix an arbitrary nonzero vector $v\in\mathbb{R}^n$ and show that $v^\top\nabla^2F(\xi)v>0$. From \eqref{eq:second},
\begin{equation*}
v^\top\nabla^2F(\xi)v = n(n+1)\int_{\mathbb{S}^{n-1}}\rho^{-n-2} (v\cdot\nabla_\xi\rho)^2 \mathrm{d}\omega
+ n\int_{\mathbb{S}^{n-1}}\rho^{-n-1} v^\top(-\nabla_\xi^2\rho)v \mathrm{d}\omega.
\end{equation*}
If the first term were zero, then $v\cdot\nabla_\xi\rho(\xi,\omega)=0$ for every every $\omega$ by continuity. Using the gradient formula \eqref{eq:first}, we have
\begin{equation*}
v\cdot\nabla_\xi\rho = -\frac{v\cdot\nu}{\nu\cdot\omega}.
\end{equation*}
Therefore, the condition $v\cdot\nabla_\xi\rho(\xi,\omega)=0$ for every $\omega$ is equivalent to
\begin{equation} \label{eq:ortho}
v\cdot\nu(y(\omega))=0 \quad \text{for every } \omega.
\end{equation}
For any fixed $\xi \in \Omega$, recall $y=\xi+\rho(\xi,\omega)\omega$. We claim that the map $\omega \mapsto \nu(y(\omega))$ is surjective onto $\mathbb{S}^{n-1}$. Indeed, given any unit vector $u$, by Supporting Hyperplane Theorem (see \cite[Theorem 1.3.2]{Schneider2014}), there exists a boundary point $y$ with outer normal $\nu(y)=u$. Choosing 
\begin{equation*}
\omega = \frac{y-\xi}{|y-\xi|},
\end{equation*} 
which is well defined because $\xi\in\Omega$, gives $y(\omega)=y$ and then $\nu(y(\omega))=u$. Using \eqref{eq:ortho}, we have $v=0$, a contradiction. Therefore, 
\begin{equation*}
v^\top\nabla^2F(\xi)v>0, \quad \text{for every nonzero $v$,}
\end{equation*}
that is, $\nabla^2F(\xi)$ is positive definite. Thus $F$ is strictly convex on $\Omega$.

A strictly convex function on a convex open set can have at most one critical point, and any such point must be its global minimum. Moreover, $\psi_\Omega(\xi)\to+\infty$ as $\xi\to\partial\Omega$, so $F$ attains a minimum in $\Omega$. By strict convexity this minimum is unique and is the only critical point. Consequently, $\psi_\Omega$ has exactly one critical point, which is its global minimizer. See more details in Section \ref{section-4}.
This completes the proof.
\end{proof}

\section{The computation of $A_k$} \label{section-com}

In this section, we present the detailed computation of the integral that is essential for the analysis in Proposition \ref{prop:annulus}. 
Let $n\ge2$ and $\mathbb{S}^{n-1}$ be the unit sphere in $\mathbb{R}^n$ with surface area
\begin{equation*}
|\mathbb{S}^{n-1}|=\frac{2\pi^{\frac{n}{2}}}{\Gamma(\frac{n}{2})}.
\end{equation*}
Fix a direction $e_1=(1,0,\cdots,0)\in\mathbb{S}^{n-1}$ and let $\theta$ be the angle between a variable point $\omega\in\mathbb{S}^{n-1}$ and $e_1$, that is, $\cos\theta=\langle\omega,e_1\rangle$. 
Recall that the Gegenbauer polynomials $C_k^{(\lambda)}(t)$ satisfy the generating function (see \cite[6.4.10]{AndrewsAskeyRoy1999})
\begin{equation*}
(1-2tr+r^2)^{-n}=\sum_{k=0}^\infty C_k^{(n)}(t)r^k,\quad |r|<1,
\end{equation*}
and define
\begin{equation*}
A_k:=\int_{\mathbb{S}^{n-1}}C_k^{(n)}(\cos\theta)\mathrm{d}\omega,\quad k=0,1,2,\cdots .
\end{equation*}

\begin{lemma} \label{lem:com}
Define for $m\ge 0$
\begin{equation*}
I_m:=\int_{-1}^1 C_{2m}^{(n)}(t)(1-t^2)^{\frac{n-3}{2}}\mathrm{d}t, \quad
J_m:=\int_{-1}^1 t C_{2m+1}^{(n)}(t)(1-t^2)^{\frac{n-3}{2}}\mathrm{d}t, \label{eq:def}
\end{equation*}
where $C_k^{(n)}(t)$ denotes the Gegenbauer polynomial with parameter $\lambda=n$.
Then  
\begin{equation}
I_m = \frac{\sqrt{\pi}\Gamma\left(\frac{n-1}{2}\right)}{\Gamma\left(\frac{n}{2}\right)}\frac{(n)_m}{m!}\frac{n+2m}{n}, \quad m\ge 0,  \label{eq:closed1}
\end{equation}
and
\begin{equation}
J_m = \frac{\sqrt{\pi}\Gamma\left(\frac{n-1}{2}\right)}{\Gamma\left(\frac{n}{2}\right)}
\frac{(n)_m}{m!}\frac{2(n+m)}{n}, \quad m\ge 0, \label{eq:closed2}
\end{equation}
where $(n)_m=n(n+1)\cdots(n+m-1)$.
\end{lemma}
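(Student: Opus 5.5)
The plan is to compute both families at once from the Gegenbauer generating function recalled above. Integrate it against the weight $w(t):=(1-t^2)^{\frac{n-3}{2}}$ on $[-1,1]$. Write $c_n:=\frac{\sqrt{\pi}\,\Gamma(\frac{n-1}{2})}{\Gamma(\frac n2)}=\int_{-1}^1 w(t)\,\mathrm dt$. Fix $0\le r<1$ and set
\begin{equation*}
G(r):=\int_{-1}^1(1-2tr+r^2)^{-n}w(t)\,\mathrm dt,\qquad H(r):=\int_{-1}^1 t\,(1-2tr+r^2)^{-n}w(t)\,\mathrm dt .
\end{equation*}
The series $\sum_k C_k^{(n)}(t)r^k$ converges uniformly in $t\in[-1,1]$ for fixed $r<1$, because $|C_k^{(n)}(t)|\le C_k^{(n)}(1)=\frac{(2n)_k}{k!}$. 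So we may integrate term by term. The parity $C_k^{(n)}(-t)=(-1)^kC_k^{(n)}(t)$ and the evenness of $w$ then give
\begin{equation*}
G(r)=\sum_{m\ge0} I_m r^{2m},\qquad H(r)=\sum_{m\ge0}J_m r^{2m+1}.
\end{equation*}
It therefore suffices to obtain closed forms for $G$ and $H$ and to read off their coefficients.

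To compute $G$, I would write $(1-2tr+r^2)^{-n}=(1+r^2)^{-n}(1-tu)^{-n}$ with $u=\frac{2r}{1+r^2}\in[0,1)$, and expand binomially in $t$. Only the even powers survive integration against $w$. For these I use the Beta integrals
\begin{equation*}
\int_{-1}^1t^{2j}w(t)\,\mathrm dt=c_n\frac{(1/2)_j}{(n/2)_j},
\end{equation*}
together with the duplication identity $(n)_{2j}=4^j(n/2)_j(\frac{n+1}{2})_j$ and $(2j)!=4^j j!(1/2)_j$. This collapses the series to
\begin{equation*}
G(r)=c_n(1+r^2)^{-n}\,{}_2F_1\!\left(\tfrac{n+1}{2},\tfrac n2;\tfrac n2;u^2\right)=c_n(1+r^2)^{-n}(1-u^2)^{-\frac{n+1}{2}}.
\end{equation*}
Since $1-u^2=\bigl(\frac{1-r^2}{1+r^2}\bigr)^2$, this simplifies to
\begin{equation*}
G(r)=c_n(1+r^2)(1-r^2)^{-n-1}.
\end{equation*}
Expanding $(1-r^2)^{-n-1}=\sum_m\frac{(n+1)_m}{m!}r^{2m}$, the coefficient of $r^{2m}$ is $\frac{(n+1)_m}{m!}+\frac{(n+1)_{m-1}}{(m-1)!}$. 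Using $(n+1)_m=\frac{(n)_m(n+m)}{n}$ and $(n+1)_{m-1}=\frac{(n)_m}{n}$, this equals $\frac{(n)_m}{m!}\frac{n+2m}{n}$, which is \eqref{eq:closed1}.

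For $H$ the same expansion is used, now keeping the odd powers $t^{2j+1}$ of the binomial series, which pair with $t$ to give $t^{2j+2}$. I would apply $\int t^{2j+2}w=c_n\frac{(1/2)_{j+1}}{(n/2)_{j+1}}$, and the analogous identities $(n)_{2j+1}=2^{2j+1}(n/2)_{j+1}(\frac{n+1}{2})_j$ and $(2j+1)!=2^{2j+1}j!(3/2)_j$, together with $(1/2)_{j+1}=\frac12(3/2)_j$. The factors $(n/2)_{j+1}$ and $(3/2)_j$ then cancel, leaving $\frac n2\cdot\frac{(\frac{n+1}2)_j}{j!}u^{2j+1}$ times $c_n(1+r^2)^{-n}$. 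Summing gives
\begin{equation*}
H(r)=\frac{n}{2}c_n(1+r^2)^{-n}u(1-u^2)^{-\frac{n+1}{2}}=n c_n r(1-r^2)^{-n-1}.
\end{equation*}
Its coefficient of $r^{2m+1}$ is $n c_n\frac{(n+1)_m}{m!}=c_n\frac{(n)_m}{m!}(n+m)$.

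I expect this step to be the main obstacle. The bookkeeping with Pochhammer symbols and the exact normalisation of the odd-part series are error-prone; in particular the formula I derive here carries a factor $n$ where \eqref{eq:closed2} has $\frac{2}{n}$. So I would first check $m=0$ directly: $C_1^{(n)}(t)=2nt$ and $\int t^2w=\frac{c_n}{n}$ give $J_0=2c_n$, which matches \eqref{eq:closed2} and not the generating-function formula above. Since this independent check disagrees with my series computation, I would redo the odd-part summation carefully and, if needed, recompute $G$ as well. As a cross-check on whichever closed form survives, I would also use the identity $2rt(1-2tr+r^2)^{-n}=(1+r^2)(1-2tr+r^2)^{-n}-(1-2tr+r^2)^{-n+1}$, evaluating the last integral by the same hypergeometric trick.
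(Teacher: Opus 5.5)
Your generating-function strategy is sound, and your computation $G(r)=c_n(1+r^2)(1-r^2)^{-n-1}$ is correct. So \eqref{eq:closed1} is established and nothing there needs redoing. The odd part, however, is wrong as written, so the proposal does not yet establish \eqref{eq:closed2}. There are two slips:
\begin{itemize}
\item The identity $(2j+1)!=2^{2j+1}j!\,(\tfrac32)_j$ is off by a factor $2$; at $j=0$ it reads $1=2$. The correct identity is $(2j+1)!=4^j\,j!\,(\tfrac32)_j$.
\item Even with your identities, the cancellation yields $\tfrac12$, not $\tfrac n2$. Once $(\tfrac n2)_{j+1}$ and $(\tfrac32)_j$ cancel, no factor $n$ is left.
\end{itemize}
Now use $(n)_{2j+1}=2^{2j+1}(\tfrac n2)_{j+1}(\tfrac{n+1}2)_j$, $(\tfrac12)_{j+1}=\tfrac12(\tfrac32)_j$ and the correct factorial identity. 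The coefficient of $u^{2j+1}$ is then exactly $(\tfrac{n+1}2)_j/j!$, hence
\[
H(r)=c_n(1+r^2)^{-n}\,u\,(1-u^2)^{-\frac{n+1}{2}}=2c_n\,r\,(1-r^2)^{-n-1}.
\]
Its coefficient of $r^{2m+1}$ is $2c_n\frac{(n+1)_m}{m!}=c_n\frac{(n)_m}{m!}\frac{2(n+m)}{n}$. This is \eqref{eq:closed2}, and it agrees with your check $J_0=2c_n$.

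Your proposed cross-check also works, and it is the cleanest way to get $H$ because it needs only even moments. The same summation gives $\int_{-1}^1(1-2tr+r^2)^{1-n}w\,\mathrm{d}t=c_n(1-r^2)^{1-n}$. Therefore $2rH(r)=c_n(1-r^2)^{-n-1}\bigl[(1+r^2)^2-(1-r^2)^2\bigr]=4c_nr^2(1-r^2)^{-n-1}$.

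Once repaired, your argument is a genuinely different proof from the paper's. The paper never sums a series. It integrates the three-term recurrence and the derivative identity $(1-t^2)\frac{\mathrm{d}}{\mathrm{d}t}C_k^{(n)}=(2n+k-1)C_{k-1}^{(n)}-ktC_k^{(n)}$ against $w$, with one integration by parts. This gives $J_m=\frac{2(m+n)}{2m+n}I_m$ and a first-order recursion for $I_m$, which it telescopes from $I_0=c_n$.

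Your route uses only the generating function and Beta moments, plus the easily justified term-by-term integration. It produces closed forms for the full generating series $G$ and $H$ at once. The cost is exactly the Pochhammer bookkeeping where the slip occurred. The paper's recursion is shorter once the two classical identities are granted.
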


\begin{proof}
The proof uses two standard identities for Gegenbauer polynomials.
First, the three-term recurrence (see \cite[6.4.16]{AndrewsAskeyRoy1999})
\begin{equation}
(k+1)C_{k+1}^{(n)}(t)=2(n+k)t C_k^{(n)}(t)-(2n+k-1)C_{k-1}^{(n)}(t). \label{eq:3term}
\end{equation}
Second, a differential identity (see \cite[8.939]{GradshteuRyzhik2007})
\begin{equation}
(1-t^2)\frac{\mathrm{d}}{\mathrm{d}t}C_k^{(n)}(t) = (2n+k-1)C_{k-1}^{(n)}(t)-k t C_k^{(n)}(t). \label{eq:diff}
\end{equation}

Set $k=2m+1$ in \eqref{eq:3term} to obtain
\begin{equation}
(2m+2)C_{2m+2}^{(n)}(t)=2(n+2m+1)t C_{2m+1}^{(n)}(t)-(2n+2m)C_{2m}^{(n)}(t). \label{eq:rec_spec}
\end{equation}
Multiplying \eqref{eq:rec_spec} by $w(t):=(1-t^2)^{\frac{n-3}{2}}$ and integrating over $[-1,1]$ gives
\begin{equation}
(2m+2)I_{m+1}=2(n+2m+1)J_m-(2n+2m)I_m. \label{eq:step1}
\end{equation}
Set $k=2m+1$ in \eqref{eq:diff} and rearrange to isolate $tC_{2m+1}^{(n)}(t)$,
\begin{equation}
(2m+1)t C_{2m+1}^{(n)}(t) = (2m+2n)C_{2m}^{(n)}(t) - (1-t^2)\frac{\mathrm{d}}{\mathrm{d}t}C_{2m+1}^{(n)}(t). \label{eq:tc}
\end{equation}
Multiply \eqref{eq:tc} by $w(t)$ and integrate,
\begin{equation*} \begin{aligned} 
(2m+1)J_m =& (2m+2n)I_m - \int_{-1}^1 (1-t^2)\frac{\mathrm{d}}{\mathrm{d}t}C_{2m+1}^{(n)}(t)w(t)\mathrm{d}t \\
=& (2m+2n)I_m +\int_{-1}^1 C_{2m+1}^{(n)}(t)\frac{\mathrm{d}}{\mathrm{d}t}\left[(1-t^2)^{\frac{n-1}{2}}\right]\mathrm{d}t \\
=& (2m+2n)I_m +\int_{-1}^1 C_{2m+1}^{(n)}(t)\left[-(n-1)t w(t)\right]\mathrm{d}t \\
=& (2m+2n)I_m -(n-1)J_m, 
\end{aligned} \end{equation*}
which simplifies to
\begin{equation}
J_m = \frac{2(m+n)}{2m+n}I_m. \label{eq:Ym}
\end{equation}

Insert \eqref{eq:Ym} into \eqref{eq:step1},
\begin{equation*} \begin{aligned}
(2m+2)I_{m+1}
=& 2(n+2m+1)\frac{2(m+n)}{2m+n}I_m - 2(n+m)I_m \\
=& 2I_m\frac{n+m}{2m+n}(n+2m+2). 
\end{aligned} \end{equation*} 
Together with 
\begin{equation*} \begin{aligned}
I_0=\int_{-1}^1(1-t^2)^{\frac{n-3}{2}}\mathrm{d}t=B\left(\frac12,\frac{n-1}{2}\right)
=\frac{\sqrt{\pi} \Gamma(\frac{n-1}{2})}{\Gamma(\frac{n}{2})},
\end{aligned} \end{equation*} 
we obtain
\begin{equation*}
I_m=I_0\prod_{j=0}^{m-1} \frac{n+j}{j+1} \cdot \prod_{j=0}^{m-1} \frac{n+2j+2}{n+2j}
=\frac{\sqrt{\pi} \Gamma(\frac{n-1}{2})}{\Gamma(\frac{n}{2})}\cdot\frac{(n)_m}{m!}\cdot\frac{n+2m}{n},
\end{equation*} 
and 
\begin{equation*}
J_m=\frac{2(m+n)}{2m+n}I_m
=\frac{\sqrt{\pi} \Gamma(\frac{n-1}{2})}{\Gamma(\frac{n}{2})}\cdot\frac{(n)_m}{m!}\cdot\frac{2(n+m)}{n}.
\end{equation*}
This establishes \eqref{eq:closed1} and \eqref{eq:closed2}, thereby completing the proof.
\end{proof}

The following theorem provides an expression for the integral of Gegenbauer polynomials over the unit sphere, which is crucial in the analysis of Proposition \ref{prop:annulus}.

\begin{theorem} \label{thm:com}
Define
\begin{equation*}
A_k:=\int_{\mathbb{S}^{n-1}}C_k^{(n)}(\cos\theta) \mathrm{d}\omega,\quad k=0,1,2,\cdots
\end{equation*}
Then
\begin{equation*}
A_k=
\begin{cases}
0, & k=2m+1,\\
 |\mathbb{S}^{n-1}| \frac{(n)_m}{m!}\left(1+\frac{2m}{n}\right), & k=2m,
\end{cases}
\end{equation*}
where $(n)_m=n(n+1)\cdots(n+m-1)$.
\end{theorem}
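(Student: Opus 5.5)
The plan is to reduce the spherical integral to a one-dimensional integral over $t=\cos\theta\in[-1,1]$ and then read off the values from Lemma \ref{lem:com}. The integrand $C_k^{(n)}(\langle\omega,e_1\rangle)$ depends only on the first coordinate of $\omega$. So we use the standard slicing formula for zonal functions on $\mathbb{S}^{n-1}$ ($n\ge2$):
\begin{equation*}
\int_{\mathbb{S}^{n-1}} h(\langle\omega,e_1\rangle)\,\mathrm{d}\omega = |\mathbb{S}^{n-2}|\int_{-1}^1 h(t)(1-t^2)^{\frac{n-3}{2}}\,\mathrm{d}t .
\end{equation*}
For $n=2$ we interpret $|\mathbb{S}^{0}|=2$; this is just $\int_0^{2\pi}h(\cos\theta)\,\mathrm{d}\theta$. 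With $h=C_k^{(n)}$ this gives $A_k=|\mathbb{S}^{n-2}|\int_{-1}^1 C_k^{(n)}(t)(1-t^2)^{\frac{n-3}{2}}\mathrm{d}t$.

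\emph{Odd $k$.} The generating function $(1-2tr+r^2)^{-n}$ is invariant under $(t,r)\mapsto(-t,-r)$. Hence $C_k^{(n)}(-t)=(-1)^kC_k^{(n)}(t)$. For $k=2m+1$ the integrand is odd while the weight is even, so $A_{2m+1}=0$.

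\emph{Even $k$.} For $k=2m$ we get $A_{2m}=|\mathbb{S}^{n-2}|\,I_m$. By \eqref{eq:closed1},
\begin{equation*}
A_{2m}=|\mathbb{S}^{n-2}|\frac{\sqrt{\pi}\,\Gamma(\frac{n-1}{2})}{\Gamma(\frac n2)}\cdot\frac{(n)_m}{m!}\Bigl(1+\frac{2m}{n}\Bigr).
\end{equation*}
It remains to identify the constant. Since $|\mathbb{S}^{n-2}|=2\pi^{\frac{n-1}{2}}/\Gamma(\frac{n-1}{2})$, we have $|\mathbb{S}^{n-2}|\sqrt{\pi}\,\Gamma(\frac{n-1}{2})/\Gamma(\frac n2)=2\pi^{\frac n2}/\Gamma(\frac n2)=|\mathbb{S}^{n-1}|$. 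As a consistency check, this is also the case $m=0$, namely $A_0=|\mathbb{S}^{n-1}|$, since $C_0^{(n)}\equiv1$.

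The only step needing care is the slicing formula and its normalisation, including the degenerate case $n=2$ where the weight $(1-t^2)^{-1/2}$ is integrable but singular. I would justify it by parametrising $\omega=(t,\sqrt{1-t^2}\,\sigma)$ with $\sigma\in\mathbb{S}^{n-2}$, whose surface element is $(1-t^2)^{\frac{n-3}{2}}\mathrm{d}t\,\mathrm{d}\sigma$. Alternatively, one can fix the constant via $h\equiv1$ and the Beta integral already computed for $I_0$. Everything else is direct substitution into Lemma \ref{lem:com}.
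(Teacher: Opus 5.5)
Your proposal is correct and matches the paper's proof: the zonal slicing formula gives $A_{2m}=|\mathbb{S}^{n-2}|\,I_m$, Lemma \ref{lem:com} supplies $I_m$, and the identity $|\mathbb{S}^{n-2}|\sqrt{\pi}\,\Gamma(\frac{n-1}{2})/\Gamma(\frac n2)=|\mathbb{S}^{n-1}|$ gives the stated constant. The only difference is cosmetic: for odd $k$ you apply the parity $C_k^{(n)}(-t)=(-1)^kC_k^{(n)}(t)$, derived from the generating function, on $[-1,1]$ after slicing, whereas the paper uses the antipodal change of variables $\omega\mapsto-\omega$ on the sphere.
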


\begin{proof}
For $k=2m+1$, the polynomial $C_{2m+1}^{(n)}(t)$ is odd (see \cite[6.4.11]{AndrewsAskeyRoy1999}). Under the change of variable $\omega\mapsto-\omega$ on the sphere, the surface measure $\mathrm{d}\omega$ is invariant, while $\cos\theta$ changes sign. Hence,
\begin{equation*} \begin{aligned}
A_{2m+1}
=&\int_{\mathbb{S}^{n-1}}C_{2m+1}^{(n)}(-\cos\theta) \mathrm{d}\omega 
= -A_{2m+1},
\end{aligned} \end{equation*}
which forces $A_{2m+1}=0$.

For $k=2m$, $C_{2m}^{(n)}(t)$ is even. Using the formula (see \cite[9.6.4]{AndrewsAskeyRoy1999}),
\begin{equation*}
\int_{\mathbb{S}^{n-1}}f(\cos\theta) \mathrm{d}\omega
=|\mathbb{S}^{n-2}|\int_{-1}^1 f(t)(1-t^2)^{\frac{n-3}{2}} \mathrm{d}t. \label{eq:sphere_int}
\end{equation*}
Thus
\begin{equation}
A_{2m}=|\mathbb{S}^{n-2}| I_m,\quad 
I_m:=\int_{-1}^1 C_{2m}^{(n)}(t)(1-t^2)^{\frac{n-3}{2}} \mathrm{d}t. \label{eq:Im_def}
\end{equation} 
The integrals $I_m$ have been studied in Lemma \ref{lem:com} and
\begin{equation}
I_m = \frac{\sqrt{\pi} \Gamma\left(\frac{n-1}{2}\right)}{\Gamma\left(\frac{n}{2}\right)} 
 \frac{(n)_m}{m!} \frac{n+2m}{n}. \label{eq:Iclosed}
\end{equation} 
Substituting \eqref{eq:Iclosed} into \eqref{eq:Im_def} and using the expression for $|\mathbb{S}^{n-2}|$ gives
\begin{equation*} \begin{aligned}
A_{2m}
=& \frac{2\pi^{\frac{n-1}{2}}}{\Gamma(\frac{n-1}{2})}
   \cdot \frac{\sqrt{\pi}\Gamma(\frac{n-1}{2})}{\Gamma(\frac{n}{2})}
   \cdot \frac{(n)_m}{m!}\cdot\frac{n+2m}{n} \\
=& \frac{2\pi^{\frac{n}{2}}}{\Gamma(\frac{n}{2})}\cdot\frac{(n)_m}{m!}\left(1+\frac{2m}{n}\right) \\
=& |\mathbb{S}^{n-1}|\frac{(n)_m}{m!}\left(1+\frac{2m}{n}\right).
\end{aligned} \end{equation*}
This completes the proof for even $k$, and together with the odd case establishes the theorem.
\end{proof}

\begin{remark}
The orthogonality property 
\begin{equation*} \label{eq:orthogonality}
\int_{\mathbb{S}^{n-1}}C_k^{(\frac{n-2}{2})}(\cos\theta)\mathrm{d}\omega=0 \ (k\ge1), \quad \int_{\mathbb{S}^{n-1}}C_0^{(\frac{n-2}{2})}(\cos\theta)\mathrm{d}\omega=\omega_{n-1},
\end{equation*}
holds precisely when the Gegenbauer parameter matches the dimension of the sphere, that is, $\lambda = \frac{n-2}{2}$. 
In this case $C_k^{(\frac{n-2}{2})}(\cos\theta)$ is the zonal spherical harmonic of degree $k$ on $\mathbb{S}^{n-1}$, and its integral over the sphere vanishes for $k\ge 1$ (see \cite[(9.6.6)]{AndrewsAskeyRoy1999}).

For a general parameter $\lambda\neq\frac{n-2}{2}$, the function $C_k^{(\lambda)}(\cos\theta)$ is no longer a spherical harmonic, and the integral $\int_{\mathbb{S}^{n-1}} C_k^{(\lambda)}(\cos\theta)\mathrm{d}\omega$ does not vanish.
We note that the methods employed in Lemma \ref{lem:com} and Theorem \ref{thm:com} are sufficiently general to treat arbitrary $\lambda>0$.  
\end{remark}

\section*{Acknowledge}
The research of Peng was supported by National Key R\&D Program (No. 2023YFA1010002). The research of Liu was supported by the Fundamental Research Funds for the Central Universities (No. 2662025XXQD003).

\subsection*{Data Availability Statement} Data sharing is not applicable to this article as no datasets were generated or analyzed during the current study.

\subsection*{Declarations}

\subsection*{Conflict of interest} The authors have no conflict of interest to declare that is relevant to the content of this article.

\end{document}